\documentclass[11pt]{amsart}
\usepackage[T1]{fontenc}
\usepackage[utf8]{inputenc}
\usepackage{amsmath,amsthm,amsfonts,amssymb,geometry,mathrsfs} 
\usepackage{mathtools}
\usepackage{caption,graphicx,subfig,float,comment}
\usepackage{tikz-cd}
\definecolor{darkgreen}{rgb}{0,0.5,0}
\usepackage[
    colorlinks, 
    citecolor=darkgreen,
    colorlinks=true,
]{hyperref}
\usepackage{cleveref}
\usepackage{multirow, tabularx}
\usepackage{empheq}
\usepackage{empheq}
\usepackage{eqparbox}

\usepackage[
    backend=biber,
    style=alphabetic,
    minalphanames=3,
    maxnames=99,
    maxalphanames=4,
    giveninits=true,
    isbn=false,
    doi=false,
]{biblatex}
\newtheorem{theorem}{Theorem}[section]
\newtheorem{lemma}[theorem]{Lemma}
\newtheorem{proposition}[theorem]{Proposition}

\newtheorem{conjecture}[theorem]{Conjecture}

\theoremstyle{remark}

\theoremstyle{definition}
\newtheorem{definition}[theorem]{Definition}
\newtheorem{example}[theorem]{Example}

\AddToHook{env/definition/begin}{\crefalias{theorem}{definition}}
\AddToHook{env/proposition/begin}{\crefalias{theorem}{proposition}}
\AddToHook{env/lemma/begin}{\crefalias{theorem}{lemma}}
\AddToHook{env/conjecture/begin}{\crefalias{theorem}{conjecture}}
\AddToHook{env/corollary/begin}{\crefalias{theorem}{corollary}}
\AddToHook{env/remark/begin}{\crefalias{theorem}{remark}}
\AddToHook{env/example/begin}{\crefalias{theorem}{example}}
\AddToHook{env/condition/begin}{\crefalias{theorem}{condition}}
\AddToHook{env/notation/begin}{\crefalias{theorem}{notation}}

\makeatletter
\def\qed@tag@alignat{%
    \global\tag@true \nonumber
    &\omit\setboxz@h {\strut@ \qedsymbol}%
    \iftagsleft@%
        \global\advance\tagshift@-\displaywidth%
    \fi%
    \tagsleft@false
    \place@tag
    \kern-\tabskip
    \ifst@rred \else \global\@eqnswtrue \fi \global\advance\row@\@ne \cr
  }

\def\alignat@qed{%
    \ifmeasuring@ \tag*{\qedsymbol}%
    \else \let\math@cr@@@\qed@tag@alignat
    \fi
  }
  \@xp\let\csname alignat*@qed\endcsname\alignat@qed
\makeatother

\newcommand{\bA}{\mathbb A}

\newcommand{\rd}{\mathrm d}

\newcommand{\bG}{\mathbb G}

\newcommand{\fl}{\mathfrak l}

\newcommand{\MT}{\mathrm{MT}}
\newcommand{\bP}{\mathbb P}

\newcommand{\fp}{\mathfrak p}

\newcommand{\bQ}{\mathbb Q}
\newcommand{\bZ}{\mathbb Z}
\newcommand{\cO}{\mathcal{O}}

\newcommand{\fu}{\mathfrak{u}}
\newcommand{\rZ}{\mathrm{Z}}

\newcommand{\ur}{{\mathrm{ur}}}

\newcommand{\mot}{{\mathrm{mot}}}

\newcommand{\Gon}{{\mathrm{Gon}}}
\newcommand{\ev}{{\mathrm{ev}}}

\newcommand{\eps}{\varepsilon}

\newcommand{\Gal}{\mathrm{Gal}}

\DeclareMathOperator{\ab}{ab}

\DeclareMathOperator{\ad}{ad}

\DeclareMathOperator{\gr}{gr}

\DeclareMathOperator{\dR}{dR}

\DeclareMathOperator{\loc}{loc}
\DeclareMathOperator{\Li}{Li}
\DeclareMathOperator{\Lie}{Lie}

\DeclareMathOperator{\Hom}{Hom}

\DeclareMathOperator{\PL}{PL}

\DeclareMathOperator{\Sel}{Sel}
\DeclareMathOperator{\Spec}{Spec}

\begin{document}

\title{Nonabelian Chabauty for the Thrice-punctured Line over Cyclotomic Fields}

\author{Minhyong Kim}
\address{Minhyong Kim, International Centre for Mathematical Sciences, 47 Potterrow,  Edinburgh EH8 9BT;
Korea Institute for Advanced Study, 85 Hoegiro, Dongdaemungu, Seoul, South Korea}
\email{minhyong.kim@icms.ac.uk}

\author{Xiang Li}
\address{Xiang Li,
	School of Mathematics,
	The University of Edinburgh,
    James Clerk Maxwell Building,
    Peter Guthrie Tait Road,
    Edinburgh,
    EH9 3FD, United Kingdom
}
\email{X.Li-198@sms.ed.ac.uk}
    
\author{Martin Lüdtke}
\address{Martin Lüdtke,
	Institut für Mathematik,
	Carl von Ossietzky Universität Oldenburg,
	26111 Oldenburg,
	Germany
}
\email{martin.luedtke@uol.de}

\begin{abstract}
In this paper we study the motivic Chabauty--Kim method, which aims to determine the set of $S$-integral points of $\bP^1\smallsetminus \{0,1,\infty\}$, over cyclotomic fields. We focus on the case $K=\bQ(\zeta_8)$ and $S=\left\{(1-\zeta_8)\right\}$, where we obtain explicit polylogarithmic Kim functions up to depth~$4$ and verify Kim's Conjecture for several primes. We also observe and explain that the Chabauty--Kim locus for the polylogarithmic quotient contains, in addition to the $S$-integral points, certain exceptional points arising from roots of unity in~$\bQ_p$.
\end{abstract}

\maketitle
\tableofcontents
\thispagestyle{empty}

\section{Introduction}
\label{sec:introduction}
For a number field~$K$ and a finite set~$S$ of primes of~$K$, it is a classical theorem of Siegel and Mahler that the set $X(\cO_{K,S})$ of $S$-integral points of the thrice-punctured line $X = \bP^1 \smallsetminus \{0,1,\infty\}$ is finite. In~\cite{kim_2005_the}, Kim gave a new proof of this result in the case $K=\bQ$ by developing a far-reaching non-abelian generalisation of Chabauty's method, now known as \emph{nonabelian Chabauty} or the \emph{Chabauty--Kim method}. For each rational prime~$p$ which splits completely in~$K$ and is not divisible by a prime in~$S$, and for each positive integer~$N$, Chabauty--Kim theory produces a subset $X(\cO_{K}\otimes\bZ_p)_{S,N}$ of $X(\cO_K \otimes \bZ_p) = \prod_{\fp \mid p} X(\cO_{K_{\fp}})$ containing $X(\cO_{K,S})$. These \emph{Chabauty--Kim loci} form a descending sequence
$$ X(\cO_K\otimes \bZ_p) \supseteq X(\cO_K\otimes \bZ_p)_{S,1} \supseteq X(\cO_K\otimes \bZ_p)_{S,2}\supseteq \cdots,$$
so they provide increasingly strong constraints on the location of the $S$-integral points $X(\cO_{K,S})$ inside the $p$-adic points. 
Kim's Conjecture is the statement that $X(\cO_K\otimes \bZ_p)_{S,N}=X(\cO_{K,S})$ for sufficiently large $N$. In the case of hyperbolic curves over~$\bQ$, a conjecture of this form was first formulated in~\cite{BDCKW}.

Work by various authors has aimed to make the Chabauty--Kim method effective, i.e., to make the Chabauty--Kim loci $X(\cO_K\otimes \bZ_p)_{S,N}$ computable in practice \cite{ishaidancohen_2016_mixed, ishaidancohen_2020_mixed, DCW:explicitCK, CDC:polylog1, CDC:polylog2,refined-selmer-equations,luedtke_2025_refined}. While prior work has mostly been done in the setting $K = \bQ$, the recent preprint \cite{LL:PolylogNF} develops the necessary foundations for a systematic study of Chabauty--Kim loci for the thrice-punctured line over general number fields and carries this out in examples over quadratic fields. In this paper we go one step further and study the method over cyclotomic fields, focusing on the concrete example $K = \bQ(\zeta_8)$, $S = \{(1-\zeta_8)\}$.

The Chabauty--Kim loci $X(\cO_{K}\otimes\bZ_p)_{S,N}$ are cut out inside $\prod_{\fp\mid p} X(\cO_{\fp})$ by iterated Coleman integrals on $\bP^1 \smallsetminus \{0,1,\infty\}$ in the variables $(z_{\fp})_{\fp \mid p}$. The \emph{polylogarithmic Chabauty--Kim locus} $X(\cO_{K}\otimes\bZ_p)_{S,\PL,N} \supseteq X(\cO_{K}\otimes\bZ_p)_{S,N}$ is more amenable to computation and is cut out by functions only involving $p$-adic (poly)logarithms
\[ \log(z) = \int_{\vec{1}_0}^z \frac{\rd t}{t}, \quad \Li_n(z) \coloneqq \int_{\vec{1}_0}^z \underbrace{\frac{{\rd}t}{t} \cdots \frac{{\rd}t}{t} \frac{{\rd}t}{1-t}}_n   \quad (1 \leq n \leq N)\]
of the $z_{\fp}$, rather than more general \emph{multiple} polylogarithms. 

Our main result is the derivation of explicit equations for the polylogarithmic Chabauty--Kim locus of depth~$N\leq 4$ in the case $K = \bQ(\zeta_8)$, $S = \{(1-\zeta_8)\}$. To state the theorem, we introduce the modified polylogarithm function
\begin{equation}
\label{eq:Ln}
    L_n(z)=\sum_{k=0}^{n-1}\frac{B_k}{k!}\log(z)^k\Li_{n-k}(z),
\end{equation}
where $B_k$ denotes the $k$-th Bernoulli number with the convention $B_1=-1/2$.

\begin{theorem}
\label{mainthm:equations}
Let $K=\bQ(\zeta_8)$, $S=\left\{(1-\zeta_8)\right\}$, and let $p$ be a rational prime which splits completely in $K$. Let \(\sigma_+\) denote complex conjugation and let \(\sigma_i\) denote the automorphism \(\zeta_8\mapsto -\zeta_8\). Label the primes of~$K$ above~$p$ as $\fp_1,\ldots,\fp_4$ with $\fp_2=\sigma_+^*\fp_1$, $\fp_3=\sigma_i^*\fp_1$, and $\fp_4=(\sigma_+\sigma_i)^*\fp_1$.
For $(z_1,z_2,z_3,z_4)\in X(\cO_K\otimes\bZ_p) = \prod_{i=1}^4 X(\cO_{\fp_i})$ write $X_i=\log(z_i)$, $Y_i=L_1(z_i)$, $Z_i=L_2(z_i)$, $W_i=L_3(z_i)$, $V_i=L_4(z_i)$ for $i=1,2,3,4$.
Assume that $L_2(1+ \zeta_8 + \zeta_8^{-1}) \neq 0$ and $L_n(\zeta_8) \pm L_n(-\zeta_8) \neq 0$ for $n=2,3,4$. Then, for each $1\leq N\leq 4$, the depth-$N$ polylogarithmic Chabauty--Kim locus
$X(\cO_K\otimes \bZ_p)_{S,\PL,N}$ is the common solution set of the first $N$ systems of equations below:
\begin{equation}
\tag{E1}\label{eq:intro-E1}
X_1=X_2,\quad X_3=X_4,\quad Y_1=Y_2,\quad Y_3=Y_4;
\end{equation}

\begin{empheq}[left=\empheqlbrace]{equation}
\tag{E2}\label{eq:intro-E2}
\begin{gathered}
    Z_1+Z_2=-Z_3-Z_4,\\
    Z_1+Z_2=c_2\left(-X_3Y_1+X_1Y_3\right).
\end{gathered}
\end{empheq}

\begin{empheq}[left=\empheqlbrace]{equation}
\tag{E3}\label{eq:intro-E3}
\begin{aligned}
    W_1-W_2+W_3-W_4 &= c_{3,1}(X_1+X_3)(Z_1+Z_3) +c_{3,2}(X_1-X_3)(Z_1+Z_4),\\ W_1-W_2-W_3+W_4 &= c_{3,3}(X_1-X_3)(Z_1+Z_3) +c_{3,4}(X_1+X_3)(Z_1+Z_4);
\end{aligned}
\end{empheq}

\begin{empheq}[left=\empheqlbrace]{equation}
\tag{E4}\label{eq:intro-E4}
\begin{aligned}
    V_1+V_2+V_3+V_4 &= c_{4,1}(X_1+X_3)(W_1+W_2+W_3+W_4)\\ &\quad +c_{4,2}(X_1-X_3)(W_1+W_2-W_3-W_4)\\ &\quad +c_{4,3}(X_1-X_3)(X_1+X_3)(Z_1+Z_2),\\ V_1+V_2-V_3-V_4 &= c_{4,4}(X_1-X_3)(W_1+W_2+W_3+W_4)\\ &\quad+c_{4,5}(X_1+X_3)(W_1+W_2-W_3-W_4)\\ &\quad +c_{4,6}(X_1-X_3)^2(Z_1+Z_2)\\ &\quad +c_{4,7}(X_1+X_3)^2(Z_1+Z_2).
\end{aligned}
\end{empheq}
The $p$-adic coefficients $c_2$, $c_{3,1},\ldots,c_{3,4}$ and $c_{4,1},\ldots,c_{4,7}$ can be computed by plugging known $S$-integral points into the equations and using linear algebra, see \Cref{thm:depth2-equations,thm:depth3-equations,thm:depth4-equations}. 
The non-vanishing assumptions are implied by the $p$-adic period conjecture \cite[Conjecture~2.2.11]{ishaidancohen_2020_mixed} and checked numerically for $p < 200$. 
\end{theorem}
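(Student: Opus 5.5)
The plan is to follow the motivic Chabauty--Kim framework for the polylogarithmic quotient over number fields developed in \cite{LL:PolylogNF}. The polylogarithmic Chabauty--Kim locus of depth $N$ is the set of points whose image under the $p$-adic polylogarithmic period map $(z_{\fp})\mapsto (\log z_{\fp}, L_n(z_{\fp}))_{\fp,\,n\le N}$ lies in the image of the localisation map from the motivic Selmer scheme. Equivalently, it is the vanishing locus of the Kim functions, which are the kernel of the pullback map from the coordinate ring of the local Selmer scheme (dimension $[K:\bQ]\cdot$(depth data) $=4$ per graded piece) to that of the global motivic Selmer scheme. I would write both coordinate rings explicitly in terms of the mixed Tate motivic Galois group of $\cO_{K,S}$, i.e.\ $\MT(\cO_{K,S})$.

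The first step is to compute the dimensions of $\rm Ext^1$ in weight $n$ for $K=\bQ(\zeta_8)$ and $S=\{(1-\zeta_8)\}$. By Borel and Dirichlet, weight $1$ has dimension $|S|+r_1+r_2-1=1+0+2-1=2$, and weight $n\ge2$ has dimension $r_2=2$. Then I decompose everything into isotypic components under $\Gal(K/\bQ)\cong(\bZ/2)^2$. The global $\rm Ext^1$ in weight $n$ corresponds to the eigencharacters on which complex conjugation acts by $(-1)^{n-1}$. Concretely, in weight $1$ these are the trivial and $\sigma_i$-odd components; in even weight, the two characters odd under $\sigma_+$; in odd weight $\ge3$, the two characters even under $\sigma_+$. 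The local side in weight $n$ is the full regular representation. This immediately gives the linear part of each system: \eqref{eq:intro-E1} and the first equation of \eqref{eq:intro-E2}, and the left-hand sides of \eqref{eq:intro-E3} and \eqref{eq:intro-E4}, as the projections of $(X_i)$, $(Y_i)$, $(Z_i)$, $(W_i)$, $(V_i)$ onto the missing characters. Here I use the labelling $\fp_2=\sigma_+^*\fp_1$ and so on, together with Galois equivariance of the period map.

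Next I compute the nonlinear corrections. I choose generators $\tau_{n,\chi}$ of the motivic Lie coalgebra of $\MT(\cO_{K,S})$ in each weight and character, and express the global Selmer coordinate functions in them. Using the coproduct in the unipotent polylogarithmic fundamental group, the coordinate corresponding to $\Li_n$ pulled back to the global Selmer scheme is a polynomial in the lower-weight coordinates and the $\log$ coordinates. Computing the Hilbert series of the motivic coordinate ring up to weight $4$, I identify which degree-$n$ products occur in each missing character. These are precisely the monomials on the right-hand sides, with unknown constants $c$ that are $p$-adic periods of $\MT(\cO_{K,S})$ (for instance $c_2$ is a ratio involving $L_2(1+\zeta_8+\zeta_8^{-1})$). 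Counting the dimensions of local minus global coordinate spaces shows that exactly the listed number of Kim functions appears in each depth, that they are independent, and that no other functions are needed. The non-vanishing hypotheses guarantee two things: the chosen basis elements (built from $\zeta_8$, $-\zeta_8$, $1+\zeta_8+\zeta_8^{-1}$, which are $S$-units) have nonzero $p$-adic period, so the coordinates can be normalised and the equations written in the displayed form; and the localisation map is injective in the relevant degrees.

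The main obstacle is the depth-$3$ and depth-$4$ bookkeeping. One must show that the products spanning the relevant isotypic pieces of the global coordinate ring are exactly the displayed ones. This requires that the localisation map on the weight $\le2$ part is an isomorphism onto its image, which is where the hypothesis on $L_n(\zeta_8)\pm L_n(-\zeta_8)$ enters. One must also rule out further relations in weight $4$. I would handle this by explicitly writing the shuffle/coproduct formula for $L_n$ (which is why the Bernoulli-modified $L_n$ is used, since it makes the coproduct cleaner) and decomposing each term into characters. Finally, the constants $c$ are determined by linear algebra from known $S$-integral points such as $\zeta_8$-powers and $1+\zeta_8$ variants, as in \Cref{thm:depth2-equations,thm:depth3-equations,thm:depth4-equations}.
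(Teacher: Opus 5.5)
Your plan is correct in outline, and it runs on the same mechanism as the paper. $\Gal(K/\bQ)$ permutes the period elements ($\eps_{\sigma^*\fp}=\sigma^*\eps_{\fp}$), and the Selmer coordinates can be chosen to be Galois eigenvectors. In each weight $n$, the two characters of the regular representation that are missing from $\mathrm{Ext}^1_n$ (those on which $\sigma_+$ acts by $(-1)^n$) give the two new equations. The other two characters absorb the new coordinates $z_{n,1},z_{n,2}$.

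The difference lies in how the computation is organised. The paper works on the Lie-algebra side:
\begin{itemize}
\item it expands $\eps_{\fp_1}$ in Goncharov normal form with respect to Galois-eigen generators;
\item it obtains $\eps_{\fp_2},\eps_{\fp_3},\eps_{\fp_4}$ by applying $\sigma_+^*,\sigma_i^*$;
\item it inserts these into the cocycle-evaluation formula \cite[Theorem~4.15]{LL:PolylogNF} and eliminates by hand.
\end{itemize}
You work dually, with isotypic components of coordinate rings. Your version shows at a glance why there are two equations per weight and what their left-hand sides are. The paper's version gives for free what you leave vague, namely exactly which monomials occur on the right.

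Characters plus a Hilbert-series count do not settle this. The relevant isotypic pieces also contain, for example:
\begin{itemize}
\item in weight $2$: $x_\alpha y_\alpha$, $x_\beta y_\beta$ and $x_\alpha^2$ in the trivial piece (so the first equation of (E2) is not ``immediate''), and $x_\alpha y_\beta$, $x_\beta y_\alpha$ separately in the other piece;
\item in weight $3$: $y_\alpha z_{2,1}$ and $y_\beta z_{2,2}$;
\item in weight $4$: $z_{2,1}^2$, $z_{2,1}z_{2,2}$, $x_\alpha^3y_\alpha$ and $y_\alpha z_{3,1}$.
\end{itemize}
These are excluded only by the bracket structure of $\Lie(\Pi^{\omega}_{\PL})$: $[e_1,\ad(e_0)^k e_1]=0$, and brackets of elements of degree $\le -2$ map to zero. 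The same structure forces the antisymmetric combination $D=x_\alpha y_\beta-x_\beta y_\alpha$, which is what removes $X_1Y_1$ and $X_3Y_3$ terms from (E2). In your language, this means you must work with the log-coordinates $L_n$ and the cobracket. In the group-like coordinates $\Li_n$, terms such as $\tfrac12\log(\alpha)^2x_\alpha y_\alpha$ genuinely appear. This is the computation you actually have to carry out.

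Some smaller corrections follow.
\begin{itemize}
\item The hypothesis $L_2(1+\zeta_8+\zeta_8^{-1})\neq 0$ is not about the period of a basis element. $L_2(\beta)$ is the coefficient $c_{\tau_\alpha\tau_\beta}$ of $[\tau_\alpha,\tau_\beta]$ in $\eps_{\fp_1}$. The paper obtains this by evaluating at the global point $\beta$, whose Kummer image kills the $\sigma_+$-odd generators $\chi_j$ because $\beta$ is real. Equivalently, plugging $\beta$ into (E2) gives $c_2=-L_2(\beta)/(\log\alpha\log\beta)$. The hypothesis is needed exactly to rewrite the depth-$4$ terms $x_\alpha^2D$, $x_\alpha x_\beta D$, $x_\beta^2D$ as multiples of $Z_1+Z_2$.
\item For $c_{\sigma_{n,j}}\neq 0$ you need no comparison between motivic and Coleman polylogarithms. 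At the Kummer image of $\zeta_8$ all $x$-coordinates vanish. So the weight-$n$ components at $\fp_1$ and $\fp_3$ are $c_{\sigma_{n,1}}z_{n,1}\pm c_{\sigma_{n,2}}z_{n,2}$. By commutativity of the Chabauty--Kim diagram these equal $\Li_n(\zeta_8)$ and $\Li_n(-\zeta_8)$; this is the paper's argument.
\item Your dimension count does give equality, once you note two things. The zero set of the displayed equations is a graph, hence irreducible of the Selmer dimension. Under the non-vanishing hypotheses, the localisation map is injective.
\item Powers of $\zeta_8$ alone cannot determine the constants. All $X_i$ vanish there, and every right-hand side has an $X$-factor. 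The paper instead uses points such as $\zeta_8^3+\zeta_8^2$ and $(1+\zeta_8)/2$.
\end{itemize}
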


Solving the $p$-adic 4-variable systems of equations of \Cref{mainthm:equations}, we computed the loci $X(\cO_K\otimes \bZ_p)_{S,\PL,N}$ for all $N \leq 4$ and primes $p<200$ which split completely in $\bQ(\zeta_8)$; the cardinalities are listed in \Cref{tab:overallstatistics}. The computations show that the loci always contain exceptional points in addition to the $\#X(\cO_{K,S})=75$ $S$-integral points. We can explain the presence of all of these exceptional points, see \Cref{thm:depth2-locus-description,thm:depth3-locus-description,thm:depth4-locus-description}. 
Notably, we show that under the $p$-adic period conjecture, the $p$-adic points of the form $(\zeta,\zeta^{-1},\eta,\eta^{-1})$, with $\zeta,\eta \neq 1$ roots of unity in~$\bQ_p$, persist in the polylogarithmic Chabauty--Kim locus even in infinite depth, see \Cref{thm:rootofunityalwayssol}.
Nevertheless, leveraging the natural $S_3$-action on $X=\bP^1\smallsetminus\{0,1,\infty\}$ to define an intermediate \emph{$S_3$-symmetrised locus}
\[ X(\cO_K \otimes \bZ_p)_{S,\PL,N} \supseteq X(\cO_K \otimes \bZ_p)_{S,\PL,N}^{S_3} \supseteq X(\cO_K \otimes \bZ_p)_{S,N}, \]
we manage to eliminate many of the exceptional points and verify Kim's Conjecture in several cases:

\begin{theorem}
\label{mainthm:s3-sym-kim}
Let $K=\bQ(\zeta_8)$, $S=\left\{(1-\zeta_8)\right\}$. For every prime $p<200$ which splits completely in $K$ and for which $\bQ_p$ contains no primitive sixth root of unity, namely $p=17,41,89,113,137$, Kim's Conjecture holds at depth~$4$. 
\end{theorem}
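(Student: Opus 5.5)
The proof will be computational and rests on the chain of inclusions
\[ X(\cO_{K,S}) \subseteq X(\cO_K\otimes\bZ_p)_{S,4} \subseteq X(\cO_K\otimes\bZ_p)^{S_3}_{S,\PL,4} \subseteq X(\cO_K\otimes\bZ_p)_{S,\PL,4}. \]
It therefore suffices to show, for each $p\in\{17,41,89,113,137\}$, that the $S_3$-symmetrised locus consists of exactly the $75$ $S$-integral points.

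\textbf{Step 1: compute the polylogarithmic locus.} For each such $p$, we first compute $X(\cO_K\otimes\bZ_p)_{S,\PL,4}$ using \Cref{mainthm:equations}.
\begin{itemize}
\item The coefficients $c_2,c_{3,\ast},c_{4,\ast}$ are found by linear algebra, plugging in known $S$-integral points as in \Cref{thm:depth2-equations,thm:depth3-equations,thm:depth4-equations}.
\item The non-vanishing hypotheses hold numerically for $p<200$, so the theorem applies.
\item We then solve the system on each residue polydisc of $\prod_{i=1}^4 X(\cO_{\fp_i})$. Write $z_i=a_i+pt_i$ with $a_i\not\equiv 0,1$, and expand $\log$ and the $L_n$ as $p$-adic power series in the $t_i$.
\item The equations (E1) roughly cut the polydisc to a $2$-dimensional family, because $\log$ and $L_1=-\log(1-z)$ determine $z$ up to torsion. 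The equations (E2)--(E4) impose further conditions, which leaves a finite set.
\item To isolate and count solutions, we use Newton polygons or multivariable Hensel lifting in each polydisc. Where the Jacobian degenerates, we use a $p$-adic Weierstrass-preparation argument.
\end{itemize}

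\textbf{Step 2: identify the exceptional points.} By \Cref{thm:depth4-locus-description}, the locus is the $75$ $S$-integral points together with explicitly described exceptional points. These include the points $(\zeta,\zeta^{-1},\eta,\eta^{-1})$ with $\zeta,\eta$ roots of unity in $\bQ_p$, from \Cref{thm:rootofunityalwayssol}, and related families.

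\textbf{Step 3: impose $S_3$-symmetry.} By definition, the symmetrised locus consists of those $z$ such that $\tau(z)$ lies in the polylogarithmic locus for every $\tau\in S_3$ acting on $X$ (e.g.\ $z\mapsto 1-z$, $z\mapsto 1/z$, applied coordinatewise). For each exceptional point, we check whether some $S_3$-translate fails (E1)--(E4).

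\begin{itemize}
\item Take a point $(\zeta,\zeta^{-1},\eta,\eta^{-1})$. Under $z\mapsto 1-z$ it becomes $(1-\zeta,1-\zeta^{-1},\dots)$. Here $\log(1-\zeta^{-1})=\log(1-\zeta)-\log(-\zeta)=\log(1-\zeta)$, while $L_1$ changes roles with $\log$. So we must verify that the higher equations (E2)--(E4) fail for these translates, using the computed $c$'s.
\item The hypothesis that $\bQ_p$ has no primitive sixth root of unity is exactly what is needed. If $\zeta=\zeta_6$, then $1-\zeta_6=\zeta_6^{-1}$ is again a root of unity, so the $S_3$-orbit stays inside the root-of-unity family and cannot be eliminated.
\item For all other $\zeta$, we expect $1-\zeta$ to be a non-torsion unit. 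In that case the translated point should violate the depth-$2$ or depth-$3$ equation, unless some $p$-adic period identity holds, and we rule that out numerically.
\end{itemize}

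\textbf{Main obstacle.} The hardest part is Step 1: rigorously certifying that no solutions are missed, especially near singular solutions where the system is degenerate. Root-of-unity points lie where $\log$ vanishes identically on torsion, so the solutions there may have multiplicity. I would first try to show that each remaining solution is a simple zero of a suitably chosen $4\times4$ subsystem and apply Hensel's lemma. For degenerate discs, I would bound the number of zeros using Weierstrass degree in the fibre over the (E1)-curve and match that bound against the known points.
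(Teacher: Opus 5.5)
Your proposal is correct and follows the paper's route. The paper computes $X(\cO_K\otimes\bZ_p)_{S,\PL,4}$ from the explicit equations (for these $p$ it equals $X(\cO_{K,S})\cup\mathcal{R}_p$), $S_3$-symmetrises, and reads off from the last row of \Cref{tab:overallstatistics} that exactly the $75$ $S$-integral points remain; your inclusion chain then gives Kim's Conjecture at depth~$4$. Two small remarks: the $1-z$ translates of $\mathcal{R}_p$ always satisfy the depth-$2$ equations (they lie in $\mathcal{G}_p$), and once the depth-$4$ locus is known your Step~3 needs no further numerics, since $X(\cO_{K,S})$ is $S_3$-stable and a non-integral point $(\zeta,\zeta^{-1},\eta,\eta^{-1})$ survives only if $1-\zeta$ and $1-\eta$ are roots of unity, i.e.\ $\zeta,\eta\in\{\zeta_6^{\pm1}\}$.
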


Let us briefly say something about the proof of \Cref{mainthm:equations}. Deriving equations for the polylogarithmic Chabauty--Kim loci $X(\cO_K \otimes \bZ_p)_{S,\PL,N}$ rests on being able to find explicit formulas describing the product of evaluation maps
\begin{equation}
\label{eq:evaluation-map}
    \Hom_{\gr}(L_S^{\MT}, \Lie(\Pi_{\PL,N}^{\omega}))_{\bQ_p} \to \prod_{\fp \mid p} \Lie(\Pi_{\PL,N}^{\dR})_{K_{\fp}}, \qquad 
    c \mapsto (c(\eps_{\fp}))_{\fp \mid p},
\end{equation}
and finding equations defining the scheme-theoretic image. Here, $L_S^{\MT}$ is the Lie algebra of the unipotent radical of the mixed Tate Galois group of $\cO_{K,S}$; $\Pi^{\dR}_{\PL,N}$ denotes the depth-$N$ polylogarithmic de Rham fundamental group of~$X$; $\Pi^{\omega}_{\PL,N}$ is a canonical $\bQ$-form of~$\Pi^{\dR}_{\PL,N}$; $\Hom_{\gr}(L_S^{\MT}, \Lie(\Pi_{\PL,N}^{\omega}))$ denotes the space of weight-graded Lie algebra homomorphisms, and the elements $\eps_{\fp} \in L_S^{\MT}(K_{\fp})$ for $\fp \mid p$ are the logarithms of the $\fp$-adic period points in $U_S^{\MT}(K_{\fp})$ constructed by Chatzistamatiou--Ünver \cite{chatzistamatiou-unver:p-adic_periods}. If $K/\bQ$ is Galois and $S$ is Galois-stable, the Galois group $\Gal(K/\bQ)$ acts on $L_S^{\MT}$ and permutes the various period elements $\eps_{\fp}$ for $\fp \mid p$. This, together with a choice of free generators of $L_S^{\MT}$ which interact nicely with the Galois action, allows us to find the general shape of the polynomials describing the map~\eqref{eq:evaluation-map}. It then remains to determine the $p$-adic coefficients, which can be achieved if one has a sufficiently large supply of $S$-integral points available. This works especially well over cyclotomic fields, where one can systematically produce many solutions to the $S$-unit equation (see §\ref{sec:producing-points}). In the case $K = \bQ(\zeta_8)$, $S = \{(1-\zeta_8)\}$, the number of solutions is $\#X(\cO_{K,S}) = 75$, as one can check with the $S$-unit equation solver in SageMath \cite{sagemath}.

This paper is dedicated to Barry Mazur and Ken Ribet. Their epoch-making papers on the arithmetic geometry of modular curves~\cite{Mazur:Eisenstein,Ribet:unramified-p-extensions} were overwhelming influences in the evolution of ideas leading up to this paper. The active study of finite flat group schemes in both papers were critical to the development of $p$-adic Hodge theory. Of course, $\bP^1 \smallsetminus \left\{0,1,\infty\right\}$ is a modular curve. The determination of integral and rational points on modular curves extending the results of~\cite{Mazur:Eisenstein} have been so far among the most concrete applications of the Chabauty--Kim method~\cite{BalakrishnanMazur:Ogg,BDMTV:QCModularCurves,BDMTV:splitCartan13}.

\section{Motivic Chabauty--Kim over number fields}

We begin by recalling the construction of Chabauty--Kim loci over number fields, referring to \cite{LL:PolylogNF} and further references therein for details. As in the introduction, we fix a number field $K$, a finite set~$S$ of primes of~$K$, and let $X=\bP^1\smallsetminus \left\{0,1,\infty\right\}$ be the thrice-punctured line over the ring of $S$-integers~$\cO_{K,S}$. We also fix a rational prime~$p$ which splits completely in~$K$ and which is not divisible by a prime in~$S$. We use the motivic formulation of the Chabauty--Kim method developed by Hadian~\cite{Hadian2011} and Dan-Cohen and Wewers \cite{DCW:explicitCK, ishaidancohen_2016_mixed}, in which the pro-unipotent étale fundamental group of the thrice-punctured line is replaced by the motivic fundamental group of Deligne and Goncharov \cite{deligne-goncharov}.

\subsection{The motivic Chabauty--Kim diagram}
Denote by $\pi_1^{\dR}(X,b)$ the de Rham fundamental group of $X = \bP^1 \smallsetminus \{0,1,\infty\}$ at   the tangential base point $b = \vec{1}_0$ \cite{deligne:droite-projective}. This is a pro-unipotent group over~$K$ admitting a canonical $\bQ$-form denoted by $\pi_1^{\omega}(X,b)$. The latter is equipped with an action of the \emph{mixed Tate motivic Galois group} $G_S^{\MT}$ of $\cO_{K,S}$, the Tannaka group of the category $\MT(\cO_{K,S},\bQ)$ of $\bQ$-linear mixed Tate motives over~$\cO_{K,S}$ with respect to the \emph{canonical fibre functor}~$\omega$, a $\bQ$-form of the de Rham realisation functor. There is a canonical decomposition $G_S^{\MT}=U_S^{\MT}\rtimes \bG_m$ with $U_S^{\MT}$ pro-unipotent. The $G_S^{\MT}$-action on $\pi_1^{\omega}(X,b)$ comes from the fact that $\pi_1^{\omega}(X,b)$ has a motivic origin: \emph{the motivic fundamental group} $\pi_1^{\mot}(X,b)$ constructed by Deligne and Goncharov \cite{deligne-goncharov}.
Fix a quotient $\pi_1^{\mot}(X,b) \twoheadrightarrow \Pi$ in $\MT(\cO_{K,S},\bQ)$, denote its canonical realisation by $\Pi^{\omega}$ and its de Rham realisation by~$\Pi^{\dR}$. Equivalently, $\Pi^{\omega}$ is a $G_S^{\MT}$-equivariant quotient of $\pi_1^{\omega}(X,b)$. The central object of study is the \emph{(motivic) Chabauty--Kim diagram}:

\begin{equation}
    \label{eq:motivic-ck-diagram}
    \begin{tikzcd}[column sep = huge]
        X(\cO_{K,S}) \rar[hook] \dar["j_S"] & X(\cO_{K}\otimes_{\bZ}\bZ_p)=\prod_{\fp\mid p} X(\cO_{\fp}) \dar["j_p=\prod_{\fp} j_{\fp}^{\dR}"] \\
        \Sel_{S,\Pi}^{\mot}(X)_{\bQ_p} \rar["\loc_p=\prod_{\fp} \ev_{\eta_{\fp}}"] & \prod_{\fp \mid p} \Pi^{\dR}_{K_{\fp}}.
    \end{tikzcd}
\end{equation}
where $\Sel_{S,\Pi}^{\mot}(X) \coloneqq \rZ^1_{\bG_m}(U_S^{\MT},\Pi^{\omega})$ is the space of $\bG_m$-equivariant cocycles from $U_S^{\MT}$ to $\Pi^{\omega}$, called \emph{motivic Selmer scheme}; $j_S$ is the \emph{global Kummer map}, and $j_{\fp}^{\dR}$ is the \emph{local Kummer map} at the prime $\fp$; $\eta_{\fp}\in U_S^{\MT}(K_{\fp})$ is the \emph{$\fp$-adic period point}, the inverse of  the point $\eta_{\fp}^{\ur}$ defined in \cite[Lemma~2.2.5]{chatzistamatiou-unver:p-adic_periods}; and $\loc_p$ is the \emph{localisation map}, the product of the evaluation maps $\ev_{\eta_{\fp}}$ at $\eta_\fp$. The motivic Selmer scheme parametrises $\Pi$-torsors in $\MT(\cO_{K,S},\bQ)$. Since the diagram commutes, a necessary condition for a point of $X(\cO_K \otimes \bZ_p)$ to come from $X(\cO_{K,S})$ is that its image under $j_p$ lies in the image of the localisation map~$\loc_p$. This motivates the following definition.

\begin{definition}
    The \emph{Chabauty--Kim locus} for the quotient $\Pi$ is the set $$X(\cO_K\otimes \bZ_p)_{S,\Pi}\coloneqq j_p^{-1}(\loc_p(\Sel_{S,\Pi}^{\mot}(X)_{\bQ_p}))\subseteq X(\cO_{K}\otimes_{\bZ}\bZ_p),$$
    i.e., the preimage under the local Kummer map of the scheme-theoretic image of the localisation map.
    By construction, $X(\cO_K\otimes \bZ_p)_{S,\Pi}$ is a subset of $X(\cO_K\otimes \bZ_p)$ containing~$X(\cO_{K,S})$. 
    If $\Pi = \pi_1^{\mot}(X,b)_N$ is the $N$-th descending central series quotient, we denote the associated Chabauty--Kim locus by $X(\cO_K \otimes \bZ_p)_{S,N}$.
\end{definition}

\begin{conjecture}[Kim's Conjecture]
\label{conj:kim-full}
    For sufficiently large quotients $\Pi$, we have
    \( X(\cO_K \otimes_{\bZ} \bZ_p)_{S,\Pi} = X(\cO_{K,S}). \)
\end{conjecture}

\subsection{Chabauty--Kim for the polylogarithmic quotient}
We focus on a particular quotient of the fundamental group called the \emph{polylogarithmic quotient}, which was first introduced by Deligne \cite[§16]{deligne:droite-projective}. The inclusion $X\hookrightarrow \bG_m$ induces a homomorphism $\pi_1^{\mathrm{mot}}(X,b)\to \pi_1^{\mot}(\bG_m)$, whose kernel we denote by~$N_1$. The polylogarithmic quotient is defined by $\Pi_{\mathrm{PL}} \coloneqq \pi_1^{\mathrm{mot}}(X,b)/[N_1,N_1]$. Its canonical and de Rham realisation are denoted by $\Pi_{\PL}^{\omega}$ and $\Pi_{\PL}^{\dR}$, respectively. The Lie algebra of $\pi_1^{\mathrm{\omega}}(X,b)$ is the free pro-nilpotent Lie algebra on two generators $e_0,e_1$, and the Lie algebra of $\Pi_{\mathrm{PL}}^{\omega}$ is its quotient by the Lie-monomials of $e_1$-degree $>1$. We also fix $N \in [1,\infty]$, denote by $\Pi_{\PL,N}$ the maximal quotient of $\Pi_{\PL}$ of nilpotency depth~$N$, and by $\Sel_{S,\PL,N}^{\mot}(X)$ the associated Selmer scheme. A basis of $\Lie( \Pi_{\mathrm{PL},N}^{\omega})$ is given by $e_0$ and $e_1$ in degree $-1$ and $\mathrm{ad}(e_0)^{n-1}(e_1)$ in degree $-n$ for $2 \leq n \leq N$. 
What makes the polylogarithmic quotient particularly convenient to work with is the fact that $U_S^{\MT}$ acts trivially on~$\Pi_{\mathrm{PL}}^{\omega}$ \cite[Lemma 4.7]{LL:PolylogNF}, so cocycles are just homomorphisms: $\Sel_{S,\PL,N}^{\mot}(X) \cong \mathrm{Hom}_{\bG_m}(U_S^{\MT},\Pi_{\mathrm{PL},N}^{\omega})$. This also makes it easy to work on the level of Lie algebras. 

Write $L_S^{\MT}=\Lie (U_S^{\MT})$. 
For each prime $\fp\mid p$, let $\varepsilon_{\fp}\coloneqq \log(\eta_{\fp})\in L_S^{\MT}(K_{\fp})$, where $\log: U_S^{\MT}\to L_S^{\MT}$ is the logarithm map from the pro-unipotent group $U_S^{\MT}$ to its pro-nilpotent Lie algebra. Since homomorphisms of unipotent groups are equivalent to homomorphisms between their Lie algebras, and the logarithm is compatible with homomorphisms, we can rewrite the evaluation maps appearing in the bottom row of the Chabauty--Kim diagram as follows:
$$\begin{tikzcd}[ampersand replacement=\&]
Z^1_{\bG_m}(U_S^{\MT},\Pi_{\mathrm{PL},N}^{\omega})_{\bQ_p}\arrow[r,"\ev_{\eta_{\fp}}"]\arrow[d,equal] \& \Pi_{\mathrm{PL},N,K_{\fp}}^{\dR}\arrow[d,equal]\\
\mathrm{Hom}_{\bG_m}(U_S^{\MT},\Pi_{\mathrm{PL},N}^{\omega})_{\bQ_p}\arrow[r,"\ev_{\eta_{\fp}}"] \arrow[d,"\Lie", "\simeq"'] \& \Pi_{\PL,N,K_{\fp}}^{\dR} \arrow[d,"\log", "\simeq"']\\
\mathrm{Hom}_{\mathrm{gr}}(L_S^{\MT},\Lie(\Pi_{\mathrm{PL},N}^{\omega}))_{\bQ_p}\arrow[r,"\ev_{\varepsilon_{\fp}}"] \& \Lie(\Pi_{\PL,N}^{\dR})_{K_{\fp}}.
\end{tikzcd}$$
As a consequence, determining the scheme-theoretic image of the bottom row of the diagram~\eqref{eq:motivic-ck-diagram} is equivalent to determining the scheme-theoretic image of the product of evaluation maps~\eqref{eq:evaluation-map}. A natural set of coordinates on $\Pi_{\PL,N}^{\dR}$ is given by the functions $L_0,L_1,L_2,\ldots,L_N$ defined as the dual basis of $e_0,e_1,[e_0,e_1],\ldots,\ad(e_0)^{N-1}e_1$. They give rise to the modified polylogarithm functions $L_n^{\fp}\colon X(\cO_{\fp}) \to \cO_{\fp}$ defined in~\eqref{eq:Ln}, where we usually drop the superscript $(-)^{\fp}$:

\begin{lemma}[{\cite[Lemma~4.4]{LL:PolylogNF}}]
    \label{Ln-as-pullback}
    Let $L_0,L_1,L_2,\ldots \in \Lie (\Pi_{\PL}^{\dR})^{\vee}$ denote the dual basis of $e_0, e_1, [e_0,e_1],\ldots$ Then, for all $n \geq 1$, the pullback of $L_n$ along the composition
    \[ X(\cO_{\fp}) \xrightarrow{j_{\fp}^{\dR}} \Pi_{\PL}^{\dR}(K_{\fp}) \xrightarrow[\sim]{\log} \Lie (\Pi_{\PL}^{\dR})(K_{\fp}) \]
    equals the map $L_n$ from~\eqref{eq:Ln}, and the pullback of $L_0$ equals the $\fp$-adic logarithm.
\end{lemma}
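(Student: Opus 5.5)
The plan is to compute the pullback of the coordinate $L_n$ directly from the explicit description of the de Rham path $j_{\fp}^{\dR}(z)$ in $\Pi^{\dR}_{\PL}$. Write $j^{\dR}_{\fp}(z)$ as the generating series of $\fp$-adic (Coleman) iterated integrals from $\vec{1}_0$ to $z$. Modulo the ideal of words containing $e_1$ at least twice, and with the convention that the series is $\sum_w \bigl(\int_{\vec 1_0}^z w\bigr)\, w$, this reads
\[ g(z) = \exp(\log(z)\, e_0)\;+\;\sum_{n\ge1}\sum_{a+b=n-1}(\text{coeff.})\, e_0^{a} e_1 e_0^{b}. \]
The coefficient of $e_0^{n-1}e_1$ is $\Li_n(z)$, up to the sign and ordering convention fixed for $e_1$ versus $\rd t/(1-t)$. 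The other coefficients, of $e_0^a e_1 e_0^b$, follow from the shuffle relations as polynomials in $\log(z)$ and $\Li_k(z)$; concretely, the shuffle product of $e_0^b$ with $e_0^{a}e_1$ expresses them.

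A cleaner route, which I would take, is to use the semidirect product structure. The Lie algebra $\Lie(\Pi^{\dR}_{\PL})$ is $K e_0 \ltimes V$, where $V$ is the abelian ideal spanned by $e_1$ and $\ad(e_0)^{k}e_1$, and $e_0$ acts on $V$ by the shift $\ad(e_0)$. I would write $g = \exp(X e_0)\exp(v)$ with $X=\log z$ and $v\in V$. The coordinates of $v$ in the basis $\ad(e_0)^{k-1}e_1$ are read off from the group element: since $V$ is abelian and $e_0$ acts through $\ad e_0$, the part of $g$ linear in $e_1$ gives $\exp(Xe_0)\cdot v$ expanded as words. Matching against the iterated integrals then yields $v_k = \sum_j (\pm X)^j/j!\,\Li_{k-j}(z)$, or a similarly simple expression.

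Next I would apply the Baker–Campbell–Hausdorff formula in this metabelian setting. For $x = Xe_0$ and $v\in V$ abelian, there is the closed formula
\[ \log(\exp(x)\exp(v)) = x + \frac{\ad x}{1-e^{-\ad x}}(v). \]
Its generating function $t/(1-e^{-t})=\sum B_k^{+} t^k/k!$ involves the Bernoulli numbers, with $B_1=+1/2$ or $-1/2$ depending on the side convention. Composing this with the previous step gives the $L_n$-coordinate as a convolution of $\exp(\pm X t)$ with $t/(1-e^{-t})$. The identity $e^{-t}\cdot t/(1-e^{-t}) = t/(e^{t}-1)$ turns this into exactly $\sum_k B_k X^k/k!\,\Li_{n-k}(z)$ with $B_1=-1/2$. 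The $L_0$-coordinate is visibly $X=\log z$, since $V$ contributes nothing to $e_0$.

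The main obstacle is bookkeeping of conventions: the order of path composition, the sign between $e_1$ and $\rd t/(1-t)$, whether $j^{\dR}$ uses the path from $b$ to $z$ or its inverse (through the Frobenius-invariant path), and the left or right version of BCH. These determine whether $e^{t}$ or $e^{-t}$ appears, and hence whether $B_1 = -1/2$. I would pin these down by checking $n=1,2$ by hand. For $n=2$ the target is $L_2 = \Li_2 - \tfrac12\log z\,\Li_1$, which can be compared with the $[e_0,e_1]$-coefficient $\tfrac12(c_{01}-c_{10})$ of $\log g$, computed from the degree-two terms of $g$ via the shuffle relation $c_{0}c_{1}=c_{01}+c_{10}$.
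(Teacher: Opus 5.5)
Your proof is correct. The paper does not prove this lemma itself; it quotes it from \cite{LL:PolylogNF}, so there is no in-paper argument to compare against. Your route is the natural one and goes through as you describe: split $j^{\dR}_{\fp}(z)$ according to the semidirect product decomposition $\Lie(\Pi^{\dR}_{\PL})=K_{\fp}e_0\ltimes V$, then apply the closed metabelian Baker--Campbell--Hausdorff formula, whose generating function produces the Bernoulli numbers.

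Two remarks. First, the convention issue is less delicate than you fear, and it does not need to be settled by checking small $n$ against the target. The only input is that the coefficients of the words $e_0$ and $e_0^{n-1}e_1$ in $j^{\dR}_{\fp}(z)$ are $\log(z)$ and $\Li_n(z)$. This is exactly how the paper defines $\Li_n$, as the Coleman integral $\int_{\vec 1_0}^z\frac{\rd t}{t}\cdots\frac{\rd t}{t}\frac{\rd t}{1-t}$. Comparing coefficients of $e_0^{n-1}e_1$ in $\exp(Xe_0)v$ gives $\sum_m \frac{X^m}{m!}v_{n-m}=\Li_n(z)$. This forces $v_n=\sum_j\frac{(-X)^j}{j!}\Li_{n-j}(z)$, so there is no remaining sign choice, and $B_1=-1/2$ comes out automatically. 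Reversing the word-order convention would only multiply the answer by $(-1)^{n-1}$; it would not flip $B_1$.

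Second, you can skip the convolution step by factoring on the other side:
$j^{\dR}_{\fp}(z)=\exp\bigl(\sum_{n\ge1}\Li_n(z)\ad(e_0)^{n-1}e_1\bigr)\exp(\log(z)e_0)$.
This holds because, modulo words with two $e_1$'s, the coefficient of $e_0^{n-1}e_1$ in $w\exp(Xe_0)$ is just the $\ad(e_0)^{n-1}e_1$-coordinate of $w$. Then $\log(e^{w}e^{x})=x+\frac{\ad x}{e^{\ad x}-1}(w)$ together with $t/(e^t-1)=\sum_k B_k t^k/k!$ gives $L_n(z)$ in one step.
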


In other words, after taking logarithms, the local Kummer map from $X(\cO_{\fp})$ into $\Lie(\Pi_{\mathrm{PL},N}^{\dR})$ is given by
$$z\mapsto (\log(z),L_1(z),L_2(z),\ldots,L_N(z))$$
in the basis $e_0, e_1, [e_0,e_1],\ldots$ of $\Lie(\Pi_{\PL,N}^{\dR})$.
Thus, determining equations for the scheme-theoretic image of the Selmer scheme inside $\prod_{\fp \mid p} \Lie(\Pi_{\PL,N}^{\dR})_{K_{\fp}}$ and pulling them back to $\prod_{\fp \mid p} X(\cO_{\fp})$, we obtain equations defining the polylogarithmic Chabauty--Kim locus $X(\cO_K \otimes \bZ_p)_{S,\PL,N}$ which are naturally given by polynomials with $\bQ_p$-coefficients in $\log(z_{\fp})$ and $L_n(z_{\fp})$ for $\fp \mid p$ and $1\leq n\leq N$.

\subsection{Coordinates on the polylogarithmic Selmer scheme}
\label{sec:selmer-scheme-coordinates}

The Lie algebra $L_S^{\MT}$ is free pro-nilpotent on a countable set of generators, and the $\bG_m$-action induces a product grading $L_S^{\MT} = \prod_{n=1}^{\infty} (L_S^{\MT})_{-n}$ by \emph{half-weight}. If $\Sigma_n \subseteq (L_S^{\MT})_{-n}$ is a lift of a basis of $(L_S^{\MT})^{\ab}_{-n}$ under the abelianisation map, then $\Sigma \coloneqq \coprod_{n = 1}^{\infty} \Sigma_n$ is a set of free generators of $L_S^{\MT}$. For the abelianisation of $L_S^{\MT}$, we have a canonical isomorphism \cite[Eq.~(2.3.11)]{deligne-goncharov}
$$(L_S^{\MT})^{\ab}\cong \prod_{n=1}^\infty (K_{2n-1}(\cO_{K,S})\otimes \bQ)^\vee.$$
We denote by $\tau_1,\ldots,\tau_{d_1} \in (L_S^{\MT})_{-1}$ a choice of generators in degree~$-1$ and by $\sigma_{n,1},\ldots,\sigma_{n,d_n} \in (L_S^{\MT})_{-n}$ generators in degree~$-n$ for $n \geq 2$, where $d_n:=\dim_{\bQ} K_{2n-1}(\cO_{K,S})\otimes \bQ$. 
We obtain a set of coordinates on the polylogarithmic Selmer scheme as follows. For any $\xi \in \mathrm{Hom}_{\mathrm{gr}}(L_S^{\MT},\Lie (\Pi_{\mathrm{PL},N}^{\omega}))$, we have
\begin{align}
\label{eq:tauimapto}
    \xi(\tau_i)&=x_i(\xi)e_0+y_i(\xi)e_1 \text{ for }1\leq i\leq d_1,\\
\label{eq:sigmanmapto}
    \xi(\sigma_{n,i})&=z_{n,i}(\xi)\,\mathrm{ad}(e_0)^{n-1}e_1\text{ for }2\leq n\leq N \text{ and }1\leq i\leq d_n,
\end{align}
which defines functions $x_i,y_i,z_{n,i}\colon \Sel_{S,\PL,N}^{\mot}(X) = \mathrm{Hom}_{\mathrm{gr}}(L_S^{\MT},\Lie (\Pi_{\mathrm{PL},N}^{\omega}))\to \bA^1$ and induces an isomorphism \cite[Theorem~6.4]{motivic-selmer-scheme}
\begin{equation}
\label{eq:Selparametrisation}
  \Sel_{S,\PL,N}^{\mot}(X) \cong \Spec \bQ\bigl[(x_i)_{1\leq i\leq d_1}, (y_i)_{1\leq i\leq d_1}, (z_{n,i})_{2\leq n\leq N,1\leq i\leq d_n}\bigr].  
\end{equation}

\subsection{The cocycle evaluation map}
\begin{definition}
    \label{def:goncharov-quotient}
    The \emph{Goncharov quotient} $L_S^{\MT} \twoheadrightarrow (L_S^{\MT})_{\Gon}$ is the quotient by the \emph{Goncharov ideal} 
    \[ I_{\Gon} \coloneqq [(L_S^{\MT})_{\leq -2}, (L_S^{\MT})_{\leq -2}]. \]
    Here, $(L_S^{\MT})_{\leq -2}$ denotes the Lie ideal of elements of degree $\leq -2$ for the grading by half-weight.
\end{definition}

\begin{proposition}[{\cite[Proposition 4.12]{LL:PolylogNF}}]
    \label{lie-algebra-element-expansion}
    Each element $\eps$ of $L_S^{\MT}(R)$ for a $\bQ$-algebra~$R$ can uniquely be written in the form 
    \begin{equation}
        \label{eq:eps-form}
        \eps = \sum_{n=1}^\infty \eps_n + \eps_{\Gon}
    \end{equation}
    where $\eps_{\Gon}$ is contained in the Goncharov ideal (\Cref{def:goncharov-quotient}), 
    \begin{equation}
        \label{eq:eps-1}
        \eps_1 = \sum_{i=1}^{d_1} c_{\tau_i} \tau_i,
    \end{equation}
    and
    \begin{equation}
        \label{eq:eps-n}
        \begin{split}
            \eps_n &= \sum_{ i_1 \leq \ldots \leq i_{n-1} > i_n} c_{\tau_{i_1}\cdots \tau_{i_n}} \ad(\tau_{i_1})\cdots\ad(\tau_{i_{n-1}}) \tau_{i_n} \\ 
              &\qquad + \sum_{m=2}^n\sum_{i=1}^{d_n} \sum_{i_1\leq\ldots\leq i_{n-m}} c_{\tau_{i_1}\cdots\tau_{i_{n-m}} \sigma_{m,i}} \ad(\tau_{i_1})\cdots\ad(\tau_{i_{n-m}}) \sigma_{m,i}
        \end{split}
    \end{equation}

    for $n \geq 2$, with coefficients $c_w \in R$. Here, all the indices $i_j$ run over $\{1,\ldots,d_1\}$.
\end{proposition}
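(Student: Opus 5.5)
The statement is a normal-form result for the free pro-nilpotent graded Lie algebra $L_S^{\MT}$ modulo the Goncharov ideal. The plan is to show that the displayed elements form a basis of the quotient $(L_S^{\MT})_{\Gon}$, graded piece by graded piece, and then lift. Since $I_{\Gon}$ is a graded ideal and everything is a product over weights, it suffices to work in each degree $-n$ separately with coefficients in $\bQ$, and then tensor with $R$. Uniqueness and existence of~\eqref{eq:eps-form} are then exactly the statement that the elements in~\eqref{eq:eps-1} and~\eqref{eq:eps-n} project to a basis of $((L_S^{\MT})_{\Gon})_{-n}$.

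\textbf{Step 1: reduce modulo the Goncharov ideal.} Write $L=L_S^{\MT}$, free on $\Sigma$. Let $L_1$ be the free Lie algebra on $\tau_1,\dots,\tau_{d_1}$. Let $M$ be the $L_1$-module spanned by the higher generators $\sigma_{m,i}$ ($m\ge 2$). As graded vector spaces, $L = L_1 \oplus L_{\le -2}$. Here $L_{\le-2}$ is the ideal generated by the $\sigma$'s, and it is free as a Lie algebra on the elements $\ad(\tau_{j_1})\cdots\ad(\tau_{j_k})\sigma_{m,i}$ for all words $j_1\cdots j_k$ (the Lazard elimination theorem). Modulo $[L_{\le -2},L_{\le-2}]$, this ideal becomes its abelianisation. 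That abelianisation is the free $U(L_1)$-module on the $\sigma_{m,i}$, i.e.\ it has basis $\ad(\tau_{j_1})\cdots\ad(\tau_{j_k})\sigma_{m,i}$ over all words. Hence $L_{\Gon}\cong L_1\oplus \bigl(U(L_1)\otimes \mathrm{span}(\sigma)\bigr)$.

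\textbf{Step 2: symmetrise.} The basis just obtained is indexed by arbitrary words, whereas~\eqref{eq:eps-n} uses only nondecreasing words $i_1\le\dots\le i_{n-m}$. I expect this to be the main point to get right. The statement as printed asserts nondecreasing words in the $\sigma$-part, so the intended quotient must also kill the commutator action of $[L_1,L_1]$ on the $\sigma$'s. That is, $[[\tau_a,\tau_b],\sigma]$ must lie in $I_{\Gon}$. This holds because $[\tau_a,\tau_b]$ has degree $-2$ and hence lies in $L_{\le -2}$: the ideal $L_{\le-2}$ consists of all elements of degree $\le -2$, including brackets of $\tau$'s. With that reading, $L_{\le -2}$ is everything but $\mathrm{span}(\tau)$, and $I_{\Gon}$ contains all brackets of two elements of degree $\le -2$.

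Redoing Step 1 with this correct ideal: $L_{\le-2}$ is the kernel of $L\to \mathrm{span}(\tau)$ (abelian), so it is free on $\ad(\tau)$-iterates of the generators of $[L_1,L_1]$ and of the $\sigma$'s. Its abelianisation is a module over $U(\mathrm{span}\,\tau)=\mathrm{Sym}(\tau)$, the commutative polynomial ring. By Lazard elimination applied to $L=\mathrm{span}(\tau)\ltimes L_{\le-2}$, this abelianisation is the free $\mathrm{Sym}(\tau)$-module on two pieces: the $\sigma_{m,i}$, and the degree-$(-2)$ brackets $[\tau_a,\tau_b]$ with $a>b$. (This uses $H_1$ of an abelian Lie algebra: the relations come from $\Lambda^2$.)

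\textbf{Step 3: read off the basis.} A $\mathrm{Sym}(\tau)$-basis element applied to $\sigma_{m,i}$ gives $\ad(\tau_{i_1})\cdots\ad(\tau_{i_{n-m}})\sigma_{m,i}$ with $i_1\le\dots\le i_{n-m}$. Applied to $[\tau_{i_{n-1}},\tau_{i_n}]$ with $i_{n-1}>i_n$, it gives $\ad(\tau_{i_1})\cdots\ad(\tau_{i_{n-1}})\tau_{i_n}$ with $i_1\le\dots\le i_{n-1}>i_n$. Both match~\eqref{eq:eps-n} exactly. Together with $\mathrm{span}(\tau)$ in degree $-1$, this gives a basis of $L_{\Gon}$.

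Choosing the set-theoretic lift given by these same Lie monomials in $L$, each $\eps$ decomposes uniquely as the lift of its image plus an element of $I_{\Gon}$. This holds degreewise, hence also in the completed product.

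The main obstacle is Step 2: one needs Lazard elimination for the semidirect decomposition, plus the identification of the abelianised kernel as a free $\mathrm{Sym}(\tau)$-module with the stated generators. I would verify the latter by a dimension count using the Witt formula.
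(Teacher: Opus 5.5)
The $\sigma$-part of your argument is fine, but the pure-$\tau$ part fails as soon as $d_1\ge 3$. To begin with, the decomposition you invoke in Step~2 does not make sense: $\mathrm{span}(\tau)$ is not a subalgebra of $L=L_S^{\MT}$, so ``$L=\mathrm{span}(\tau)\ltimes L_{\le-2}$'' is not a semidirect product, and Lazard elimination says nothing about it. The decomposition you need is the one from Step~1, namely $L=L_1\ltimes J$, where $L_1$ is free on the $\tau_i$ and $J$ is the ideal generated by the $\sigma_{m,i}$. Since $L_{\le -2}=L_1'\oplus J$ with $L_1'=[L_1,L_1]$, one gets $I_{\Gon}=L_1''\oplus\bigl([L_1',J]+[J,J]\bigr)$, and hence $(L_S^{\MT})_{\Gon}\cong L_1/L_1''\oplus\mathrm{Sym}(\tau)\otimes\mathrm{span}(\sigma)$. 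The second summand gives the $\sigma$-terms with nondecreasing words, as you say.

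The first summand is the free metabelian Lie algebra on $\tau_1,\dots,\tau_{d_1}$. Its derived part $L_1'/L_1''$ is \emph{not} a free $\mathrm{Sym}(\tau)$-module on the $[\tau_a,\tau_b]$ with $a>b$.
\begin{itemize}
\item The Jacobi identity $\ad(\tau_1)[\tau_2,\tau_3]+\ad(\tau_2)[\tau_3,\tau_1]+\ad(\tau_3)[\tau_1,\tau_2]=0$ is a nontrivial $\mathrm{Sym}(\tau)$-linear relation among these generators.
\item In Koszul terms, $L_1'/L_1''$ is the kernel of $\mathrm{Sym}(\tau)\otimes\mathrm{span}(\tau)\to\mathrm{Sym}(\tau)$, i.e.\ the cokernel of $\mathrm{Sym}(\tau)\otimes\Lambda^3\to\mathrm{Sym}(\tau)\otimes\Lambda^2$. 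So $\Lambda^2$ supplies the generators and $\Lambda^3$ supplies the relations.
\item Concretely, for $d_1=3$ the degree $-3$ part of $L_1$ has dimension $8$ and meets $I_{\Gon}$ trivially. Your free module has dimension $3\cdot 3=9$ there, so the Witt-formula check you propose would expose this.
\end{itemize}
The assertion in Step~3 that the index sets ``match exactly'' is the same miscount. A free-module basis would consist of all $\ad(\tau_{j_1})\cdots\ad(\tau_{j_{n-2}})[\tau_a,\tau_b]$ with $j_1\le\dots\le j_{n-2}$ and $a>b$, with no constraint linking $a$ to the $j$'s. The indexing $i_1\le\dots\le i_{n-1}>i_n$ instead forces $i_{n-2}\le i_{n-1}$, i.e.\ the first bracketed index must be the largest of all. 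So your argument proves the statement only for $d_1\le 2$. That covers $K=\bQ(\zeta_8)$ here, but not the general proposition.

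What is missing is the standard basis theorem for free metabelian Lie algebras.
\begin{itemize}
\item \textbf{Spanning.} The right-normed brackets $\ad(\tau_{j_1})\cdots\ad(\tau_{j_k})[\tau_a,\tau_b]$ span $L_1'$. The operators $\ad(\tau_i)$ commute on $L_1'/L_1''$, since their commutators are $\ad$ of elements of $L_1'$. The Jacobi identity in the form $\ad(\tau_c)[\tau_a,\tau_b]=\ad(\tau_a)[\tau_c,\tau_b]-\ad(\tau_b)[\tau_c,\tau_a]$ then moves the largest index into the first slot of the bracket.
\item \textbf{Linear independence.} Use the Lie analogue of the Magnus embedding, $L_1'/L_1''\hookrightarrow\mathrm{Sym}(\tau)\otimes\mathrm{span}(\tau)$, given by $\ad(m)[\tau_a,\tau_b]\mapsto m\tau_a\otimes\tau_b-m\tau_b\otimes\tau_a$. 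It sends the element indexed by $i_1\le\dots\le i_{n-1}>i_n$ to a combination containing exactly one tensor $M\otimes\tau_e$ with $e$ smaller than the largest index in $M$, namely $\tau_{i_1}\cdots\tau_{i_{n-1}}\otimes\tau_{i_n}$. These leading tensors are pairwise distinct, so the images are independent.
\end{itemize}
With this in place of your free-module claim, the rest of your argument (degreewise lifting, passage to the product and to $R$) goes through.
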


\begin{theorem}[{\cite[Theorem 4.15]{LL:PolylogNF}}]
    \label{cocycle-evaluation-map}
    Let $R$ be a $\bQ$-algebra and let $\eps \in L_S^{\MT}(R)$ be written as in \Cref{lie-algebra-element-expansion} above. Let $\xi \in \Hom_{\gr}(L_S^{\MT},\Lie (\Pi_{\PL,N}^{\omega}))_R$ be a graded homomorphism defined over~$R$ with coordinates $x_i = x_i(\xi)$, $y_i = y_i(\xi)$ and $z_{n,i} = z_{n,i}(\xi)$ defined by Eqs.~(\ref{eq:tauimapto})--(\ref{eq:sigmanmapto}). Then the cocycle evaluation map $\ev_{\eps}$ is given by
    \[ \ev_{\eps}(\xi) = \sum_{i=1}^{d_1} c_{\tau_i} x_i e_0 + \sum_{i=1}^{d_1} c_{\tau_i} y_i e_1 + \sum_{n=2}^{N} k_n \ad(e_0)^{n-1} e_1\]
    with
    \begin{equation}
    \begin{split}
            k_n &= \sum_{i_1 \leq \ldots \leq i_{n-1}} \sum_{i_n} c_{\tau_{i_1}\cdots\tau_{i_n}}' x_{i_1}\cdots x_{i_{n-1}} y_{i_n} \\
            &\qquad + \sum_{m=2}^n \sum_{i=1}^{d_m} \sum_{i_1 \leq \ldots \leq i_{n-m}} c_{\tau_{i_1}\cdots \tau_{i_{n-m}}\sigma_{m,i}} x_{i_1} \cdots x_{i_{n-m}} z_{m,i}
    \end{split}
    \end{equation}

    and
    \[ 
        c_{\tau_{i_1}\cdots\tau_{i_n}}' = \begin{cases}
            c_{\tau_{i_1}\cdots\tau_{i_n}} & \text{if $i_{n-1} > i_n$},\\[2mm]
            -\displaystyle\sum_{\substack{j \in \{i_1,\ldots,i_{n-1}\},\\ j < i_n}} c_{\tau_{i_1}\cdots\hat\tau_{j}\cdots \tau_{i_n} \tau_{j}} & \text{if $i_{n-1} \leq i_n$.}
        \end{cases}
    \]
\end{theorem}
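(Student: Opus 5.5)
Since $\xi$ is a graded Lie algebra homomorphism and $\Lie(\Pi_{\PL,N}^{\omega})$ has a very rigid bracket structure, I will compute $\xi(\eps)$ term by term using the decomposition $\eps=\sum_n\eps_n+\eps_{\Gon}$ of \Cref{lie-algebra-element-expansion}, and project to degrees $\ge -N$.

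The first step is to show $\xi(\eps_{\Gon})=0$. Elements of degree $\le -2$ in $\Lie(\Pi_{\PL,N}^{\omega})$ lie in the span of the $\ad(e_0)^{n-1}e_1$ with $n\ge 2$. This span is contained in the image of $N_1$ and is therefore abelian, because brackets of two elements of $e_1$-degree $\ge1$ have $e_1$-degree $\ge 2$ and vanish. A graded $\xi$ sends $(L_S^{\MT})_{\le -2}$ into this abelian ideal, so $\xi$ kills $[(L_S^{\MT})_{\le-2},(L_S^{\MT})_{\le-2}]=I_{\Gon}$. Next, the degree-$(-1)$ part $\xi(\eps_1)=\sum c_{\tau_i}(x_ie_0+y_ie_1)$ is immediate, and it gives the $e_0$ and $e_1$ coefficients.

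For $n\ge2$ I will compute the iterated brackets. Write $a_i=x_ie_0+y_ie_1$ and $u_n=\ad(e_0)^{n-1}e_1$. In the quotient we have $[e_1,u_m]=0$ for $m\ge1$ and $[e_0,u_m]=u_{m+1}$, so $\ad(a_i)u_m=x_iu_{m+1}$. It follows that
\[\xi\bigl(\ad(\tau_{i_1})\cdots\ad(\tau_{i_{n-m}})\sigma_{m,i}\bigr)=x_{i_1}\cdots x_{i_{n-m}}z_{m,i}u_n,\]
which is exactly the $\sigma$-part of $k_n$. For the pure $\tau$-words, $[a_{i_{n-1}},a_{i_n}]=(x_{i_{n-1}}y_{i_n}-y_{i_{n-1}}x_{i_n})u_2$, and applying the remaining $\ad(a_{i_j})$ multiplies by $x_{i_j}$. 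Each word therefore contributes
\[x_{i_1}\cdots x_{i_{n-2}}\bigl(x_{i_{n-1}}y_{i_n}-x_{i_n}y_{i_{n-1}}\bigr)u_n.\]

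The remaining step, which I expect to be the main bookkeeping obstacle, is to rewrite $\sum_{i_1\le\dots\le i_{n-1}>i_n}c_{\tau_{i_1}\cdots\tau_{i_n}}(\cdots)$ in the monomial basis $x_{j_1}\cdots x_{j_{n-1}}y_{j_n}$ with $j_1\le\dots\le j_{n-1}$. The first summand $x_{i_1}\cdots x_{i_{n-1}}y_{i_n}$ already has this form, with last index smaller than $i_{n-1}$, so it gives $c'=c$ in the case $j_{n-1}>j_n$. The second summand is $-x_{i_1}\cdots x_{i_{n-2}}x_{i_n}\,y_{i_{n-1}}$. Set $j=i_n$ and $k=i_{n-1}$, so that $j<k$ and $k\ge$ all of $i_1,\dots,i_{n-2}$. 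This yields the monomial with $y_k$ and $x$-multiset $\{i_1,\dots,i_{n-2},j\}$, whose maximal element is at most $k$; this is the case $j_{n-1}\le j_n$. Conversely, given such a target monomial $(j_1\le\dots\le j_{n-1};\,j_n)$, the preimages are obtained by choosing the element $j$ in $\{j_1,\dots,j_{n-1}\}$ with $j<j_n$ that played the role of $i_n$. The remaining indices, sorted, give $i_1\le\dots\le i_{n-2}\le j_n=i_{n-1}>j=i_n$, which is exactly the word $\tau_{j_1}\cdots\hat\tau_j\cdots\tau_{j_{n-1}}\tau_{j_n}\tau_j$. The sum should be taken over distinct values of $j$, since repeated indices give the same word. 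I will check this bijection carefully, including the multiplicities coming from repeated indices, to match the stated formula for $c'$. Summing over $n\le N$ then gives the claimed expression for $\ev_\eps(\xi)$.
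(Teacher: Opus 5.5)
Your proposal is correct and follows essentially the same route as the argument behind this result, which the present paper only quotes from \cite{LL:PolylogNF}. In that route, $\xi$ kills $I_{\Gon}$ because it maps $(L_S^{\MT})_{\le -2}$ into the abelian span of the $\ad(e_0)^{n-1}e_1$; each Lie word is evaluated via $\ad(x_ie_0+y_ie_1)\,\ad(e_0)^{m-1}e_1=x_i\,\ad(e_0)^{m}e_1$; and the terms $-x_{i_1}\cdots x_{i_{n-2}}x_{i_n}y_{i_{n-1}}$ are regrouped by the distinct value $j<i_{n-1}$ that played the role of $i_n$, which yields exactly $c'$ (with the points left implicit, namely $\ev_\eps(\xi)=\xi(\eps)$ and $\xi$ factoring through a finite-dimensional quotient, being harmless).
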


In half-weight~$-1$, we have an identification $(L_S^{\MT})_{-1} \cong \cO(U_S^{\MT})_1^{\vee}$
and an isomorphism 
\[ \log^{\fu}\colon \cO_{K,S}^\times \otimes \bQ \xrightarrow{\sim} \cO(U_S^{\MT})_1 \]
given by the motivic logarithm \cite[Lemma 5.9]{LL:PolylogNF}. Let $\left\{\alpha_i\right\}_i$ be a $\bQ$-basis of $\cO_{K,S}^\times\otimes\bQ$ and suppose that the $\bQ$-basis $\left\{\tau_i\right\}_i$ of $(L_S^{\mathrm{MT}})_{-1}$ is the dual basis of $\left\{\log^{\fu}(\alpha_i)\right\}_i$:
\[ \langle \tau_i, \log^{\fu}(\alpha_j)\rangle = \delta_{ij}. \]

\begin{lemma}[{\cite[Proposition 5.11]{LL:PolylogNF}}]
\label{lem:logalpha}
In the Goncharov representation of the $\fp$-adic period element $\varepsilon_{\fp}$, the coefficients in half-weight~$-1$ are given by $\fp$-adic logarithms: $c_{\tau_i}=\log (\alpha_i)$ for $i=1,\ldots,d_1$.
\end{lemma}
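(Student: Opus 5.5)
This is Lemma~\ref{lem:logalpha}, and the plan is to reduce it to a pairing computation in half-weight~$-1$. Write $\varepsilon_{\fp}=\sum_n \varepsilon_n+\varepsilon_{\Gon}$ as in \Cref{lie-algebra-element-expansion}. Since $\varepsilon_n$ for $n\ge2$ and $\varepsilon_{\Gon}$ lie in half-weight $\le -2$, the component of $\varepsilon_{\fp}$ in $(L_S^{\MT})_{-1}$ is exactly $\varepsilon_1=\sum_i c_{\tau_i}\tau_i$. Using the identification $(L_S^{\MT})_{-1}\cong \cO(U_S^{\MT})_1^\vee$ and the dual basis property $\langle\tau_i,\log^{\fu}(\alpha_j)\rangle=\delta_{ij}$, we get
\[ c_{\tau_i}=\langle \varepsilon_{\fp},\log^{\fu}(\alpha_i)\rangle . \]
So it suffices to show that pairing $\varepsilon_{\fp}$ with the weight-one element $\log^{\fu}(\alpha)\in\cO(U_S^{\MT})_1$ gives the $\fp$-adic logarithm $\log(\alpha)$.

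The first step is to pass from $\varepsilon_{\fp}=\log\eta_{\fp}$ to $\eta_{\fp}$ itself. Elements of $\cO(U_S^{\MT})_1$ are primitive for the coproduct, because the degree-one part of the reduced coproduct vanishes for a connected graded Hopf algebra. Hence they define additive characters $U_S^{\MT}\to\bG_a$. Such a character $f$ satisfies $f(\exp x)=\langle x,f\rangle$, which gives $\langle\varepsilon_{\fp},\log^{\fu}(\alpha)\rangle=\log^{\fu}(\alpha)(\eta_{\fp})$, that is, the evaluation of the function $\log^{\fu}(\alpha)$ at the period point.

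The second step is to identify this evaluation with a $p$-adic period. By the construction of Chatzistamatiou--Ünver, evaluating an element of $\cO(U_S^{\MT})$ at $\eta_{\fp}$ realises the $\fp$-adic (crystalline/Frobenius-invariant) period map on motivic periods. The motivic logarithm $\log^{\fu}(\alpha)$ is the unipotent part of the Kummer motive $K_\alpha$, an extension of $\bQ(0)$ by $\bQ(1)$. Its $\fp$-adic period is $\log_{\fp}(\alpha)$, since the Frobenius-compatible splitting of the de Rham realisation of $K_\alpha$ is given by the Coleman/$p$-adic logarithm, and $\alpha$ is a $\fp$-unit because $p$ is prime to $S$.

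Finally, we must fix the sign convention, since $\eta_{\fp}$ is defined as the inverse of $\eta_{\fp}^{\ur}$. Inverting negates the value of an additive character, so the sign has to be checked against the normalisation of $\log^{\fu}$ in \cite[Lemma~5.9]{LL:PolylogNF}. I expect this compatibility of normalisations to be the main obstacle: the sign conventions for $\log^{\fu}$, the Kummer motive, and $\eta_{\fp}$ versus $\eta^{\ur}_{\fp}$ must line up. My first approach would be to compute everything for $\bG_m$, where the Kummer map for $\bP^1\smallsetminus\{0,\infty\}$ pulls $L_0$ back to $\log$ (cf.\ \Cref{Ln-as-pullback}), and then use the commutativity of the Chabauty--Kim diagram at the point $\alpha$ to read off the sign.
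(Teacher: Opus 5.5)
The paper does not prove this statement itself; it imports it from \cite[Proposition~5.11]{LL:PolylogNF}. Your Steps~1 and~2 are a correct reduction. Only the half-weight $-1$ component $\sum_i c_{\tau_i}\tau_i$ of $\eps_{\fp}$ pairs nontrivially with $\cO(U_S^{\MT})_1$, weight-one functions are primitive, and $f(\exp x)=\langle x,f\rangle$ for primitive $f$. Hence $c_{\tau_i}=\log^{\fu}(\alpha_i)(\eta_{\fp})$.

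But the proposal stops exactly where the content of the lemma lies. Step~3 is asserted rather than argued. Your Frobenius-splitting heuristic silently assumes that $\eta_{\fp}^{\ur}$ in \cite[Lemma~2.2.5]{chatzistamatiou-unver:p-adic_periods} is the unipotent element conjugating crystalline Frobenius into the weight torus. If it were instead the $U_S^{\MT}$-component of Frobenius in $U_S^{\MT}\rtimes\bG_m$, the Kummer period would acquire a factor of the form $1-p^{\pm1}$. As you note yourself, the heuristic also cannot decide the sign coming from $\eta_{\fp}=(\eta_{\fp}^{\ur})^{-1}$. So as written you have at best $c_{\tau_i}=\pm\log(\alpha_i)$, modulo an unproved identification.

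The clean fix is to make your fallback the main argument and drop the crystalline heuristic entirely. Take $\alpha\in\cO_{K,S}^{\times}$ and work on $\bG_m$ with base point $\vec{1}_0$; this is needed because the $\alpha_i$ need not lie in $X(\cO_{K,S})$. Commutativity of the motivic Chabauty--Kim diagram, together with the fact that the local Kummer map pulls $L_0$ back to the $\fp$-adic logarithm (\Cref{Ln-as-pullback}), gives $j_S(\alpha)(\eta_{\fp})=\log_{\fp}(\alpha)$. Here $j_S(\alpha)\in\Hom_{\bG_m}(U_S^{\MT},\pi_1^{\omega}(\bG_m,\vec{1}_0))\cong\cO(U_S^{\MT})_1$ is the global Kummer class. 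This commutativity already encodes the comparison of $\fp$-adic periods with Coleman integration, so no separate Frobenius computation is needed.

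Combined with Step~2, the lemma then follows once you know $j_S(\alpha)=\log^{\fu}(\alpha)$ in $\cO(U_S^{\MT})_1$. That identity is exactly where the sign is decided. It must come from the definition of the motivic logarithm in \cite[Lemma~5.9]{LL:PolylogNF}, as the unipotent period of the path torsor from $\vec{1}_0$ to $\alpha$. You cannot take it from \Cref{lem:globalKummervaluation}: in this paper that lemma is deduced from the very statement you are proving, so that route would be circular.
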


The choice of the $\tau_i$ defines functions $x_i,y_i$ on the Selmer scheme via \eqref{eq:tauimapto}. On the image of the global Kummer map, they are given in terms of the dual basis $\{\alpha_i^{\vee}\}_i$ of $(\cO_{K,S}^{\times} \otimes \bQ)^{\vee}$:
\begin{lemma}
\label{lem:globalKummervaluation}
If $\xi_z\in \mathrm{Hom}_{\mathrm{gr}}(L_S^{\MT},\Lie(\Pi_{\PL,N}^{\omega}))$ denotes the image of $z\in X(\cO_{K,S})$ under the global Kummer map $j_S$, then $$x_i(\xi_z)=\alpha_i^{\vee}(z),\quad y_i(\xi_z)=-\alpha_i^{\vee}(1-z).$$
\end{lemma}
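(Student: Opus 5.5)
We need to prove Lemma: for $z\in X(\cO_{K,S})$, the coordinates of the global Kummer image are $x_i(\xi_z)=\alpha_i^\vee(z)$ and $y_i(\xi_z)=-\alpha_i^\vee(1-z)$.

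The plan is to reduce everything to half-weight~$-1$, where the global Kummer map only sees the abelianisation $\Pi_{\PL,1}=\pi_1^{\mot}(X,b)^{\ab}$. In half-weight $-1$ the coordinates $x_i,y_i$ of $\xi_z$ depend only on the image of $\xi_z$ in $\Hom_{\gr}(L_S^{\MT},\Lie(\Pi^{\omega}_{\PL,1}))$. Since $\Lie\Pi^{\omega}_{\PL,1}=\bQ(1)e_0\oplus\bQ(1)e_1$ and the global Kummer map is functorial in the quotient $\Pi$, this image is the class of the torsor of paths from $b=\vec 1_0$ to $z$ in the abelianised fundamental group. So it suffices to compute the depth-one Kummer map. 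That map is just the product of the two Kummer maps for $\bG_m$ obtained from the morphisms $X\to\bG_m$ given by $t\mapsto t$ and $t\mapsto 1-t$. These correspond to the two coordinates $e_0,e_1$: the dual forms $\rd t/t$ and $\rd t/(1-t)=-\rd(1-t)/(1-t)$.

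Next I would use the standard fact that the motivic Kummer map for $\bG_m$ is compatible with the motivic logarithm. Concretely, the torsor of paths on $\bG_m$ from the tangent vector $\vec 1_0$ (which maps to $\vec 1_1$ of $\bG_m$ via $1-t$, up to the sign of the tangent vector) to a unit $u$ is the Kummer extension of $\bQ(0)$ by $\bQ(1)$ attached to $u$. The corresponding homomorphism $(L_S^{\MT})_{-1}\to\bQ(1)$ is $\tau\mapsto\langle\tau,\log^{\fu}(u)\rangle$. This is essentially the defining property of the isomorphism $\log^{\fu}$ from \cite[Lemma 5.9]{LL:PolylogNF}, so I would cite it (or the analogous statement in \cite{LL:PolylogNF}, §5) rather than reprove it.

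Writing $u=\prod\alpha_j^{\alpha_j^\vee(u)}$ in $\cO_{K,S}^\times\otimes\bQ$ and using $\langle\tau_i,\log^{\fu}\alpha_j\rangle=\delta_{ij}$ gives $\langle\tau_i,\log^{\fu}(u)\rangle=\alpha_i^\vee(u)$. Applying this with $u=z$ for the $e_0$-coordinate gives $x_i(\xi_z)=\alpha_i^\vee(z)$. Applying it with $u=1-z$ for the $e_1$-coordinate gives $y_i(\xi_z)=-\alpha_i^\vee(1-z)$, where the minus sign comes from $\rd t/(1-t)=-\rd\log(1-t)$. One can cross-check this sign against Lemma~\ref{Ln-as-pullback}, since $L_1(z)=\Li_1(z)=-\log(1-z)$, by the compatibility of the global and local Kummer maps via the diagram~\eqref{eq:motivic-ck-diagram} and Lemma~\ref{lem:logalpha}.

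The main obstacle is bookkeeping of sign and normalisation conventions: the orientation of path torsors (paths from $b$ to $z$ versus the reverse), the sign in the identification of $\Lie\pi_1^{\omega}(\bG_m)$ with $\bQ(1)$, and the tangential base point at $1$ for the second map. To pin these down I would compare with the $p$-adic realisation rather than track them abstractly. Evaluating at $\eps_\fp$ via Theorem~\ref{cocycle-evaluation-map} and Lemma~\ref{lem:logalpha}, the $e_0$-component is $\sum_i\log(\alpha_i)x_i(\xi_z)$ and the $e_1$-component is $\sum_i\log(\alpha_i)y_i(\xi_z)$. By Lemma~\ref{Ln-as-pullback} and commutativity of the diagram, these must equal $\log z$ and $L_1(z)=-\log(1-z)$ respectively. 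The candidate formulas satisfy both identities. This confirms the signs, given that the linear forms are already known to be of the form $u\mapsto\alpha_i^\vee(u)$ up to a universal sign.
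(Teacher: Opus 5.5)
Your proposal is correct, but its load-bearing step differs from the paper's. You identify the depth-one Kummer class motivically: you split via the two maps $X\to\bG_m$ given by $t$ and $1-t$, and invoke a compatibility of the $\bG_m$-Kummer map with $\log^{\fu}$ that you attribute, somewhat loosely, to the definition of $\log^{\fu}$. You then use the $p$-adic realisation only to fix a residual sign.

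The paper's proof consists entirely of what you call the cross-check. Commutativity of \eqref{eq:motivic-ck-diagram} in depth~$1$, together with \Cref{cocycle-evaluation-map}, \Cref{lem:logalpha} and \Cref{Ln-as-pullback}, gives
\begin{align*}
\sum_i x_i(\xi_z)\log(\alpha_i)&=\log(z)=\sum_i\alpha_i^{\vee}(z)\log(\alpha_i),\\
\sum_i y_i(\xi_z)\log(\alpha_i)&=-\log(1-z)=-\sum_i\alpha_i^{\vee}(1-z)\log(\alpha_i).
\end{align*}
These identities alone determine the coordinates, for two reasons:
\begin{itemize}
\item $x_i(\xi_z),y_i(\xi_z)\in\bQ$, since $\xi_z$ is a $\bQ$-point of the Selmer scheme;
\item $\log(\alpha_1),\ldots,\log(\alpha_{d_1})$ are $\bQ$-linearly independent in $\bQ_p$. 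Clearing denominators in a rational relation produces an $S$-unit $\prod_i\alpha_i^{m_i}$. This is a $\fp$-adic unit with vanishing logarithm, hence a root of unity, which forces all $m_i=0$.
\end{itemize}
So no a priori knowledge of the shape of the linear forms, up to sign or otherwise, is needed.

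The paper's route is shorter and uses only results already stated in the paper. Yours gives a $p$-independent, conceptual explanation of the formula. However, it rests on an external normalisation statement that you cannot pin down without the $p$-adic comparison anyway, and that comparison already suffices on its own.
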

\begin{proof}
Consider the Chabauty--Kim diagram in \eqref{eq:motivic-ck-diagram} and fix $\fp \mid p$. The image of $z\in X(\cO_{K,S})$ under the composition of the top map and the $\fp$-adic local Kummer map is $(\log(z),-\log(1-z))=(\sum_i \alpha_i^{\vee}(z)\log(\alpha_i),\sum_i -\alpha_i^{\vee}(1-z)\log(\alpha_i))$ in depth $N=1$, while the image under the composition of the global Kummer map and the $\fp$-adic localisation map is equal to $(\sum_i  x_i(\xi_z)\log(\alpha_i), \sum_i y_i(\xi_z)\log(\alpha_i))$, where we used \Cref{cocycle-evaluation-map} and \Cref{lem:logalpha}. The result follows from the commutativity of the Chabauty--Kim diagram.
\end{proof}

The Goncharov coefficients of $\eps_{\fp}$ in half-weight $< -1$ are more difficult to understand. However, we have:
\begin{lemma}[{\cite[Lemma~5.17]{LL:PolylogNF}}]
    \label{period-conjecture-implies-nonvanishing}
    Assume the $\fp$-adic period conjecture \cite[Conjecture~2.2.11]{ishaidancohen_2020_mixed}. Then all coefficients $c_w \in K_{\fp}$ in the Goncharov expansion of $\eps_{\fp}$ are nonzero.
\end{lemma}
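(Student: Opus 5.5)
The statement to prove is \Cref{period-conjecture-implies-nonvanishing}: under the $\fp$-adic period conjecture, every Goncharov coefficient $c_w$ of $\eps_{\fp}$ is nonzero. The plan is to realise each $c_w$ as a $\fp$-adic period of an explicit mixed Tate motive. We then show that, as a motivic period, it is nonzero. The period conjecture says that the $\fp$-adic period map is injective on the algebra of motivic periods (de Rham periods of $\MT(\cO_{K,S},\bQ)$), so a nonzero motivic period has nonzero $\fp$-adic realisation.

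\textbf{Step 1: express $c_w$ as a period pairing.}
The Goncharov coordinates are linear functionals on $L_S^{\MT}$. Dually, each coefficient $c_w$ corresponds to an element $f_w \in \cO(U_S^{\MT})$ of the graded Hopf algebra, homogeneous of weight $|w|$:
\begin{itemize}
\item take the dual basis of the free generators $\Sigma$ and of the Lyndon-type Lie monomials in the expansion of \Cref{lie-algebra-element-expansion};
\item extend it to a basis of $L_S^{\MT}$ compatible with the splitting $L_S^{\MT} = \bigoplus_n \mathrm{span}(\ldots)\oplus I_{\Gon}$;
\item transport it through $\log$ to functions on $U_S^{\MT}$.
\end{itemize}
Writing $\eps_{\fp}=\log\eta_{\fp}$, we get $c_w = \langle \log \eta_{\fp}, f_w\rangle$. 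Composing with $\log$ on a pro-unipotent group only changes $f_w$ by adding polynomials in lower-weight functions, so $c_w = \mathrm{per}_{\fp}(\tilde f_w)$ for some $\tilde f_w \in \cO(U_S^{\MT})$. Here $\mathrm{per}_{\fp}$ is the $\fp$-adic period map $\cO(U_S^{\MT}) \to K_{\fp}$, $f\mapsto f(\eta_{\fp})$, of Chatzistamatiou--Ünver.

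\textbf{Step 2: show $\tilde f_w \neq 0$.}
Since $\cO(U_S^{\MT})$ is the graded dual of $U(L_S^{\MT})$, the functional "Goncharov coefficient of $w$" is a nonzero linear form on $L_S^{\MT}$, by the uniqueness in \Cref{lie-algebra-element-expansion}. Hence its dual function $f_w$ is nonzero, with nonzero leading term. The correction terms coming from $\log$ are products of lower-weight functions, i.e. they are decomposable. The leading part of $f_w$ pairs nontrivially with a Lie element, so it is not decomposable in the shuffle sense. Therefore $\tilde f_w\ne 0$.

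\textbf{Step 3: apply the period conjecture.}
The $\fp$-adic period conjecture asserts that $\mathrm{per}_{\fp}$ is injective on $\cO(U_S^{\MT})$; equivalently, $\eta_{\fp}$ is a Zariski-generic point of $U_S^{\MT}$. Injectivity applied to $\tilde f_w\neq 0$ gives $c_w\neq0$.

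\textbf{Main obstacle.}
The delicate point is Step 1: checking that the coordinate "coefficient of $w$ in the Goncharov expansion" really is a regular function defined over $\bQ$ on $U_S^{\MT}$. The danger is that the projection discarding $I_{\Gon}$, together with the particular ordered Lie monomials, might depend on $R$ in a way that is not algebraic over $\bQ$.

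I would handle this as follows:
\begin{itemize}
\item note that the decomposition in \Cref{lie-algebra-element-expansion} is a $\bQ$-linear direct-sum decomposition of each graded piece of $L_S^{\MT}$, compatible with base change in $R$;
\item conclude that the coefficients are $\bQ$-linear coordinates, hence polynomial functions on $U_S^{\MT}$ after composing with $\log$.
\end{itemize}
Genericity of $\eta_{\fp}$ then applies directly.
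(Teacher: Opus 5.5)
Your proposal is correct, and it is essentially the argument behind the cited lemma: $c_w$ is the value at $\eta_{\fp}$ of the nonzero $\bQ$-rational function $\ell_w\circ\log$ on $U_S^{\MT}$, where $\ell_w$ is the linear coordinate on $L_S^{\MT}$ dual to the Goncharov monomial $w$, so injectivity of the $\fp$-adic period map forces $c_w\neq 0$. Your Step~2 can be shortened by noting that $\log\colon U_S^{\MT}\to L_S^{\MT}$ is an isomorphism of $\bQ$-schemes, so $\ell_w\circ\log\neq 0$ follows immediately from $\ell_w\neq 0$. Also, the fact that the paper's $\eta_{\fp}$ is the inverse of Chatzistamatiou--Ünver's point is harmless, because inversion is a $\bQ$-automorphism of $U_S^{\MT}$ and therefore preserves injectivity of evaluation.
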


\subsection{The Galois action on the motivic Selmer scheme}
Let $\sigma\colon K \to K'$ be a homomorphism of number fields and let $S$ be a set of primes of~$K$. Define the set $\sigma_*S$ of primes of~$K'$ by
$\sigma_*S \coloneqq (\sigma^*)^{-1}(S) \coloneqq \{ \fp' \,\vert\, \sigma^*\fp' \in S \}.$ 
By \cite[§2.16]{deligne-goncharov} and $\cO_{K,S}^{\times} \subseteq \cO_{K',\sigma_*S}^{\times}$, extension of scalars along~$\sigma$ induces a fully faithful, exact $\otimes$-functor 
$
    \sigma_*\colon \MT(\cO_{K,S},\bQ) \to \MT(\cO_{K',\sigma_*S},\bQ) 
$
which is compatible with the canonical fibre functors. On Tannaka groups it induces a surjective homomorphism $\sigma^*\colon G_{\sigma_* S}^{\MT} \to G_S^{\MT}$, hence $\bG_m$-equivariant surjective homomorphisms
\begin{equation*}
    \sigma^*\colon U_{\sigma_* S}^{\MT} \to U_S^{\MT} \quad \text{and} \quad 
    \sigma^*\colon L_{\sigma_* S}^{\MT} \to L_S^{\MT}.
\end{equation*}
Let $\pi_1^{\mot}(X,b) \twoheadrightarrow \Pi$ be a quotient of the motivic fundamental group of~$X$ in $\MT(\cO_{K,S},\bQ)$. We identify $\Pi$ with its image $\sigma_* \Pi$ under the above fully faithful functor. If $P$ is a $\Pi$-torsor over some $\bQ$-algebra~$R$ in $\MT(\cO_{K,S},\bQ)$, then $\sigma_* P$ is a $\Pi$-torsor over~$R$ in $\MT(\cO_{K',\sigma_*S},\bQ)$, thus we get an induced morphism of motivic Selmer schemes
\begin{equation}
    \label{eq:sigma-on-selmer-schemes}
    \sigma_*:\rZ^1_{\bG_m}(U_{S}^{\MT}, \Pi^{\omega})\to \rZ^1_{\bG_m}(U_{\sigma_*S}^{\MT}, \Pi^{\omega})
\end{equation}

\begin{lemma}[{\cite[Lemma 3.10]{LL:PolylogNF}}]
\label{thm:GalactionmotivicSel}
    For $\xi\in \rZ^1_{\bG_m}(U_{S}^{\MT}, \Pi^{\omega})$ and $u\in U_{\sigma_*S}^{\MT}$, the map \eqref{eq:sigma-on-selmer-schemes} is given by
    $$(\sigma_* \xi)(u)=\xi(\sigma^*(u)).$$
\end{lemma}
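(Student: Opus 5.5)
The statement to prove is \Cref{thm:GalactionmotivicSel}: $(\sigma_*\xi)(u)=\xi(\sigma^*(u))$. My approach is to unwind the Tannakian dictionary between torsors in $\MT(\cO_{K,S},\bQ)$ and cocycles. The plan has three steps.

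\textbf{Step 1: cocycles as torsors.} First I recall how a $\Pi$-torsor $P$ over $R$ in $\MT(\cO_{K,S},\bQ)$ gives a cocycle. The canonical realisation $P^{\omega}$ is a $\Pi^{\omega}_R$-torsor with a compatible $G_S^{\MT}$-action. Since $\omega$ is canonically split by the weight grading, there is a canonical $\bG_m$-fixed point $p_0\in P^{\omega}(R)$; one could also trivialise via the image of the base point. The cocycle is then defined by $u\cdot p_0 = p_0\cdot \xi(u)$ for $u\in U_S^{\MT}$, and it is $\bG_m$-equivariant.

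\textbf{Step 2: transport along $\sigma_*$.} The functor $\sigma_*$ is compatible with the canonical fibre functors. Hence $(\sigma_*P)^{\omega}=P^{\omega}$ as $\Pi^{\omega}$-torsors, and this identification is compatible with the $\bG_m$-gradings, so the same point $p_0$ is the distinguished point. The motivic Galois action of $G^{\MT}_{\sigma_*S}$ on $\omega(\sigma_*M)=\omega(M)$ factors through $\sigma^*\colon G^{\MT}_{\sigma_*S}\to G^{\MT}_S$; this is exactly how $\sigma^*$ is defined from $\sigma_*$ by Tannakian duality. The same holds for $\Pi^{\omega}$: its action via $\sigma^*$ is compatible with identifying $\Pi$ with $\sigma_*\Pi$.

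\textbf{Step 3: compute.} For $u\in U^{\MT}_{\sigma_*S}$ we get
\[ u\cdot p_0 = \sigma^*(u)\cdot p_0 = p_0\cdot \xi(\sigma^*(u)), \]
which is the claimed formula. It remains to check that $\sigma^*$ restricts to the unipotent radicals and commutes with the $\bG_m$-splittings. This holds because $\sigma_*$ preserves weight filtrations and the fibre functor's grading, so $\sigma^*$ is $\bG_m$-equivariant and maps $U$ to $U$.

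The main obstacle is the bookkeeping of conventions. One must confirm that the distinguished trivialising point is preserved by $\sigma_*$ and that left/right actions match, so that no inverse or twist by $\sigma$ enters. I would settle this by working with the functor-of-points description of torsors as fibre-functor isomorphisms. There the statement amounts to the functoriality of $\underline{\mathrm{Isom}}^{\otimes}$ under composition with $\sigma_*$.
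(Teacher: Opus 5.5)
Your argument is correct. Since $\sigma_*$ is compatible with the graded canonical fibre functors, you have $\omega(\sigma_*P)=\omega(P)$ with $G^{\MT}_{\sigma_*S}$ acting through $\sigma^*$. The cocycle is normalised by the unique $\bG_m$-fixed point, and this point is the same on both sides, which gives $(\sigma_*\xi)(u)=\xi(\sigma^*(u))$. The paper itself only cites \cite[Lemma~3.10]{LL:PolylogNF} without giving a proof, so there is no paper argument to compare against; yours is the natural unwinding.

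Two small corrections to how you frame it:
\begin{itemize}
\item The convention issues you worry about are settled by uniqueness of the $\bG_m$-fixed point, not by an $\mathrm{Isom}^{\otimes}$-description. Uniqueness holds because $\bG_m$ acts on $\Lie(\Pi^{\omega})$ with strictly negative weights, so $(\Pi^{\omega})^{\bG_m}$ is trivial. The $\mathrm{Isom}^{\otimes}$ picture describes torsors under the Tannaka group, not $\Pi$-torsors in $\MT(\cO_{K,S},\bQ)$.
\item Drop the aside about trivialising via ``the image of the base point''. A general $\Pi$-torsor has no such point, and any trivialisation that is not $\bG_m$-fixed would not give an element of $\rZ^1_{\bG_m}(U_S^{\MT},\Pi^{\omega})$.
\end{itemize}
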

When $U_S^{\MT}$ acts trivially on~$\Pi^{\omega}$, so that the Selmer scheme is the space of graded homomorphisms $L_S^{\MT} \to \Lie(\Pi^{\omega})$, it follows from \Cref{thm:GalactionmotivicSel} that $(\sigma_*\xi)(\eps) = \xi(\sigma^*(\eps))$ for all $\xi \in \Hom_{\gr}(L_S^{\MT},\Lie(\Pi^{\omega}))$ and $\eps \in L_S^{\MT}$.

\begin{theorem}[{\cite[Theorem 5.19]{LL:PolylogNF}}]
\label{thm:Galpermuteseta}
    We have $\eta_{\sigma^*\fp'}=\sigma^* \eta_{\fp'}$ and $\varepsilon_{\sigma^*\fp'}=\sigma^* \varepsilon_{\fp'}$ for all primes $\fp' \not\in \sigma_*S$ of~$K'$.
\end{theorem}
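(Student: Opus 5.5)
The plan is to deduce the statement from a Tannakian characterisation of the period point, together with the compatibility of realisations with the base-change functor $\sigma_*$. Fix a prime $\fp'\notin\sigma_*S$ of~$K'$ and put $\fp\coloneqq\sigma^*\fp'$, which is then not in~$S$. The embedding $\sigma$ extends to a continuous embedding $K_{\fp}\hookrightarrow K'_{\fp'}$, and both sides of the claimed identity will be compared as points of $U_S^{\MT}(K'_{\fp'})$.

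\emph{Step 1 (characterisation).} I would first recall from \cite{chatzistamatiou-unver:p-adic_periods} how $\eta^{\ur}_{\fp}$, and hence $\eta_{\fp}$, is characterised. For every object $M$ of $\MT(\cO_{K,S},\bQ)$, the de Rham realisation $\omega(M)\otimes K_{\fp}$ carries the crystalline Frobenius $\varphi_M$. This is available because $M$ has good reduction at $\fp\notin S$, and it is functorial and compatible with $\otimes$. The weight grading also gives a canonical Frobenius action $\varphi_{0,M}$ on the associated graded, namely multiplication by $p^{-n}$ on $\gr^W_{2n}$. The period point is the unique element $\eta^{\ur}_{\fp}\in U_S^{\MT}(K_{\fp})$ that conjugates one Frobenius into the other, simultaneously for all $M$ and functorially in~$M$. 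By Tannakian reconstruction, an element of $U_S^{\MT}(K_{\fp})$ is determined by its action on $\omega(M)\otimes K_{\fp}$ for all~$M$. So it suffices to check, for each such~$M$, that $\sigma^*\eta_{\fp'}$ and $\eta_{\fp}$ act identically on $\omega(M)\otimes K'_{\fp'}$.

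\emph{Step 2 (comparison).} By construction of $\sigma^*\colon U_{\sigma_*S}^{\MT}\to U_S^{\MT}$, the element $\sigma^*\eta_{\fp'}$ acts on $\omega(M)\otimes K'_{\fp'}$ as $\eta_{\fp'}$ acts on $\omega(\sigma_*M)\otimes K'_{\fp'}$. Here I use the identification $\omega(\sigma_*M)=\omega(M)$, since $\sigma_*$ is compatible with the canonical fibre functors. It therefore remains to show that the Frobenius structure on $\omega(\sigma_*M)\otimes K'_{\fp'}$ at~$\fp'$ is the base change along $K_{\fp}\hookrightarrow K'_{\fp'}$ of the Frobenius on $\omega(M)\otimes K_{\fp}$ at~$\fp$. (If the residue field grows, the Frobenius at~$\fp'$ is a suitable power; I would handle this by working with the linearised Frobenius power used in the definition of $\eta^{\ur}$.) The weight-graded Frobenius $\varphi_0$ is visibly compatible, because $\sigma_*$ preserves weights. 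Granting the compatibility for $\varphi$ as well, uniqueness in the characterisation of Step~1 gives $\sigma^*\eta^{\ur}_{\fp'}=\eta^{\ur}_{\fp}$. Taking inverses, using that $\sigma^*$ is a group homomorphism, yields $\sigma^*\eta_{\fp'}=\eta_{\fp}$.

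\emph{Step 3 (Lie algebra).} The statement for $\eps$ follows at once. The map $\sigma^*\colon U^{\MT}_{\sigma_*S}\to U^{\MT}_S$ is a homomorphism of pro-unipotent groups with derivative the map $\sigma^*$ on Lie algebras, and the logarithm commutes with such homomorphisms. Hence $\eps_{\fp}=\log\eta_{\fp}=\log\sigma^*\eta_{\fp'}=\sigma^*\log\eta_{\fp'}=\sigma^*\eps_{\fp'}$.

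\emph{Main obstacle.} I expect the crux to be the compatibility in Step~2 of the crystalline (or rigid) realisation with extension of scalars: the Frobenius structure of $\sigma_*M$ at~$\fp'$ must be the base change of that of $M$ at~$\fp$. My first attempt would be to reduce to generators of the category, namely Kummer motives and the motivic fundamental torsors of $\bP^1\smallsetminus\{0,1,\infty\}$. For these, the Frobenius is described by Coleman integration, and Coleman integrals are visibly invariant under extension of the $p$-adic base field. I would then argue that the Frobenius structure, being a tensor-compatible structure determined on the generators, propagates to the whole category.
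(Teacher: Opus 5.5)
Your skeleton is sound. The following parts are all fine: characterising $\eta^{\ur}_{\fp}$ as the unique element of $U_S^{\MT}(K_{\fp})$ conjugating the linearised crystalline Frobenius into the weight-graded one; observing that $\sigma^*\eta_{\fp'}$ acts on $\omega(M)\otimes K'_{\fp'}$ as $\eta_{\fp'}$ acts on $\omega(\sigma_*M)\otimes K'_{\fp'}$; and passing to logarithms at the end. For uniqueness you should argue directly in $U_S^{\MT}(K'_{\fp'})$ with the power $\varphi_0^{f'}$. This is harmless: $\mathrm{Ad}(\varphi_0)$ acts on $(L_S^{\MT})_{-n}$ by $p^{\pm n}$, so an element of $U_S^{\MT}$ commuting with $\varphi_0^{f'}$ has zero logarithm. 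The real problem is that the compatibility you ``grant'' in Step~2 is the entire content of the statement, and the method you propose for it does not work.

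Kummer motives and fundamental torsors of $\bP^1\smallsetminus\{0,1,\infty\}$ do not generate $\MT(\cO_{K,S},\bQ)$ in general, so agreement of two elements of $G_S^{\MT}(K'_{\fp'})$ on them proves nothing. For example, take $K\neq\bQ(\sqrt{-3})$ imaginary quadratic and $S=\emptyset$.
\begin{itemize}
\item Since $\cO_K^\times\otimes\bQ=0$, all Kummer motives split.
\item Since $X(\cO_K)=\emptyset$, the only torsors available run between tangential base points. These come from $\MT(\bZ)$; root-of-unity tangent vectors only twist by Kummer torsors that are trivial with $\bQ$-coefficients.
\item The motivic Lie algebra of $\bZ$ vanishes in half-weights $-1$ and $-2$, while $d_2=r_2=1$ for $K$. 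Hence the half-weight $-2$ part of $L_S^{\MT}$ acts trivially on the subcategory you generate.
\end{itemize}
So your argument is blind to the degree-$(-2)$ components of $\eps_{\fp}$. These are the analogues of $c_{\chi_1},c_{\chi_2}$, whose Galois behaviour drives the depth-2 equations in this paper. Allowing base points in $X(K)$ does not rescue this: it takes you outside $\MT(\cO_{K,S},\bQ)$, introduces bad reduction outside $S$ (possibly at $\fp$), and would in any case need a Zagier/Goncharov-type generation result that is not known in general.

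The missing ingredient has to come from the construction of the $p$-adic realisation itself.
\begin{itemize}
\item $\sigma_*$ is induced by pulling motives back along $\Spec\cO_{K',\sigma_*S}\to\Spec\cO_{K,S}$.
\item The Frobenius on $\omega(M)\otimes K_{\fp}$ comes from crystalline (or rigid/syntomic) cohomology over $W(k_{\fp})$ via the Berthelot--Ogus comparison.
\item All of this is functorial and $\otimes$-compatible under base change along $\cO_{K_{\fp}}\to\cO_{K'_{\fp'}}$ and $W(k_{\fp})\to W(k_{\fp'})$.
\end{itemize}
This gives $\varphi^{\mathrm{lin}}_{\fp'}(\sigma_*M)=\bigl(\varphi^{\mathrm{lin}}_{\fp}(M)\otimes K'_{\fp'}\bigr)^{f(\fp'\mid\fp)}$ naturally in $M$, after which your Steps~1--3 complete the proof. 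In the situation this paper actually needs, the point is much simpler. There $K'=K$ is Galois, $\sigma$ is an automorphism, and $p$ is split, so all completions are $\bQ_p$. The compatibility is then just transport of structure along $\Spec\sigma$.
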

When $K/\bQ$ is Galois then \Cref{thm:Galpermuteseta} says that $\Gal(K/\bQ)$ permutes the period elements $\eta_{\fp}$ and $\eps_{\fp}$ for $\fp \mid p$.

\section{Cyclotomic fields}
\subsection{Dimensions of Selmer schemes over cyclotomic fields}
Let $K$ be a number field, let $S$ be a finite set of primes of $K$, and let $p$ be a rational prime that splits completely in~$K$ and is not divisible by a prime in~$S$.

\begin{lemma}[{\cite{borel}}]
\label{lem:Kdim}
    $K_{2n-1}(\cO_{K,S})\otimes \bQ$ has the $\bQ$-dimension
$$d_n=\begin{cases}
r_1+r_2+\#S-1, & n=1,\\
r_1+r_2, & n\geq 3\quad \mathrm{odd},\\
r_2, & n\geq 2\quad \mathrm{even},
\end{cases}$$
where $r_1,r_2$ denote the number of real embeddings and pairs of complex embeddings of $K$.
For $n=1$, there is a canonical isomorphism $K_1(\cO_{K,S}) \otimes \bQ \cong \cO_{K,S}^\times\otimes\bQ$.
\end{lemma}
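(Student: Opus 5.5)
This is Borel's theorem, cited from \cite{borel}, so the plan is to reduce the $S$-integer case to Borel's computation for the full ring of integers $\cO_K$ and treat $n=1$ separately.

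\emph{Case $n=1$.} We have $K_1(R)=R^\times$ for any commutative ring $R$ that is a Dedekind domain (or more generally local or Euclidean-type, via $SK_1(\cO_{K,S})=0$ by Bass--Milnor--Serre). Hence $K_1(\cO_{K,S})\otimes\bQ\cong\cO_{K,S}^\times\otimes\bQ$ canonically. The Dirichlet $S$-unit theorem then gives rank $r_1+r_2+\#S-1$.

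\emph{Case $n\geq2$.} First use Quillen's localisation sequence
\[ \cdots\to \bigoplus_{\fp\in S}K_m(k_\fp)\to K_m(\cO_K)\to K_m(\cO_{K,S})\to \bigoplus_{\fp\in S}K_{m-1}(k_\fp)\to\cdots \]
together with Quillen's computation that the higher $K$-groups of finite fields are finite in positive degrees. After tensoring with $\bQ$, this gives $K_{2n-1}(\cO_{K,S})\otimes\bQ\cong K_{2n-1}(\cO_K)\otimes\bQ$ for $2n-1\geq 2$. The relevant groups are $K_{2n-1}(k_\fp)$ and $K_{2n-2}(k_\fp)$ with $2n-2\geq 2$, all of which are torsion.

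Next invoke Borel's theorem: the rank of $K_{2n-1}(\cO_K)$ equals $r_1+r_2$ for $n$ odd ($n\geq3$) and $r_2$ for $n$ even. Borel proves this via the rational homology of $SL(\cO_K)$ and the continuous cohomology of the relevant Lie groups. Equivalently, it is the dimension of the $(-1)^{n-1}$-eigenspace of complex conjugation on $\bigoplus_{\sigma:K\to\bC}\bR(n-1)$.

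The main substance is Borel's theorem itself, which we simply cite. The only bookkeeping needed is the localisation step to pass from $\cO_K$ to $\cO_{K,S}$.
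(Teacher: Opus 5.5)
The paper gives no argument beyond the citation of Borel, and your proposal correctly unpacks that citation: Borel's rank computation for $K_{2n-1}(\cO_K)$ is transported to $\cO_{K,S}$ via Quillen's localisation sequence, using that the $K$-groups of the residue fields $k_\fp$ are finite in positive degrees, and $n=1$ is handled by the determinant map, $SK_1(\cO_{K,S})=0$ and Dirichlet's $S$-unit theorem. The one inaccuracy is your claim that $K_1(R)=R^\times$ for every Dedekind domain $R$, which is false in general since $SK_1(R)$ can be nonzero; what the argument actually needs, and what you correctly cite, is the Bass--Milnor--Serre vanishing of $SK_1(\cO_{K,S})$ (for the rational statement, its being torsion would already suffice).
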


Now consider the case where $K=\bQ(\zeta_{l^m})$ is a cyclotomic field for a prime power $l^m > 2$. We have $[K:\bQ]=\varphi(l^m)$, $r_1=0$ and $r_2=\frac{\varphi(l^m)}{2}$. By \eqref{eq:Selparametrisation}, we can parametrise the depth-$N$ polylogarithmic Selmer scheme with coordinates $(x_i)_{1\leq i\leq d_1}, (y_i)_{1\leq i\leq d_1}, (z_{n,i})_{2\leq n\leq N,1\leq i\leq d_n}$. 
So the dimension of the global Selmer scheme is
\begin{align*}
    \dim_{\bQ_p} \mathrm{Sel}_{S,\PL,N}^{\mot}(X)_{\bQ_p} &= 2d_1+\sum_{n=2}^N d_n=2\#S+\left(\left\lceil \frac N2\right\rceil+1\right) r_1+(N+1)r_2-2\\
    &=2\#S+(N+1)\frac{\varphi(l^m)}{2}-2,
\end{align*}
whereas the local Selmer scheme $\prod_{\fp \mid p} \Pi_{\mathrm{PL},N,K_{\fp}}^{\dR}$ has the dimension $(N+1)[K:\bQ]=(N+1)\varphi(l^m)$. So we expect the Chabauty--Kim locus $X(\cO_K \otimes \bZ_p)_{S,\PL,N}$ to be defined by a system of $\dim_{\bQ_p} \prod_{\fp \mid p} \Pi_{\mathrm{PL},N,K_{\fp}}^{\dR} - \dim_{\bQ_p} \mathrm{Sel}_{S,\PL,N}^{\mot}(X)_{\bQ_p} = (N+1)\varphi(l^m)/2 +2-2\#S$ equations in $[K:\bQ]$ variables $z_1,\ldots,z_{[K:\bQ]}$.

\subsection{Producing points over cyclotomic fields}
\label{sec:producing-points}
Let $l$ be a prime, $m\geq 1$ such that $l^m >2$, $K=\bQ(\zeta_{l^m})$, and $S=\left\{(1-\zeta_{l^m})\right\}$. 
One of the reasons to focus on this case is that in the Chabauty--Kim method it is useful to have a large supply of $S$-integral points available. There are several ways to produce points in $X(\cO_{K,S})$. Observe that $1-\zeta_{l^m}^i \in \cO_{K,S}^{\times}$ for all integers~$i$ with $l^m \nmid i$, see e.g.\ \cite[Lemma~8]{siksek-visser}. Thus, the roots of unity $\zeta_{l^m}^i$ for $1 \leq i < l^m$ belong to $X(\cO_{K,S})$. 

\begin{lemma}
\label{lem:basicpts}
    $(\zeta_{l^m}^i-1)/(\zeta_{l^m}^j-1) \in X(\cO_{K,S})$ for any $1\leq i\ne j<l^m$.
\end{lemma}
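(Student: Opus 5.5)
To show that $z=(\zeta^i-1)/(\zeta^j-1)$ lies in $X(\cO_{K,S})$, where $\zeta=\zeta_{l^m}$, we check three things: $z$ is an $S$-unit, $z\neq 1$, and $1-z$ is an $S$-unit. An $S$-integral point of $X=\bP^1\smallsetminus\{0,1,\infty\}$ is precisely an element $z\in\cO_{K,S}$ with $z\in\cO_{K,S}^\times$ and $1-z\in\cO_{K,S}^\times$. Equivalently, $(z,1-z)$ is a solution of the $S$-unit equation $u+v=1$.

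First, since $1\le i<l^m$ we have $l^m\nmid i$, and likewise for~$j$. So by the fact recalled just before the lemma (\cite[Lemma~8]{siksek-visser}), both $1-\zeta^i$ and $1-\zeta^j$ lie in $\cO_{K,S}^\times$. In particular $\zeta^j-1\neq 0$, so $z$ is well-defined, and as a quotient of $S$-units it is an $S$-unit. Second, $z\neq 1$ because $i\neq j$ and $1\le i,j<l^m$ force $\zeta^i\neq\zeta^j$.

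Third, compute
\[ 1-z=\frac{(\zeta^j-1)-(\zeta^i-1)}{\zeta^j-1}=\frac{\zeta^j-\zeta^i}{\zeta^j-1}=\frac{\zeta^i\,(\zeta^{j-i}-1)}{\zeta^j-1}. \]
Here $\zeta^i$ is a root of unity, hence a unit. Since $0<|j-i|<l^m$, we have $l^m\nmid j-i$, so $\zeta^{j-i}-1$ is again an $S$-unit by the same fact (if $j-i<0$ one can replace it by the representative of $j-i$ modulo $l^m$ in $[1,l^m)$). Therefore $1-z\in\cO_{K,S}^\times$.

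Since $z$ and $1-z$ are both $S$-units and $z\neq 0,1$, the point $z$ lies in $X(\cO_{K,S})$. There is no real obstacle: the only input is that $1-\zeta^k$ is an $S$-unit whenever $l^m\nmid k$, which holds because its norm is a power of~$l$ (it equals $l$ when $\zeta^k$ is a primitive $l^m$-th root of unity), so it generates a power of the prime above~$l$.
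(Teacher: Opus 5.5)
Your proposal is correct and follows essentially the same approach as the paper: both rest on the identity $1-z=\zeta^i(\zeta^{j-i}-1)/(\zeta^j-1)$ together with the fact that $1-\zeta^k\in\cO_{K,S}^\times$ whenever $l^m\nmid k$. The paper leaves implicit the checks that $z$ itself is an $S$-unit and that $z\neq 0,1$, which you spell out.
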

\begin{proof}
    We have 
    \[ 1 - \frac{\zeta_{l^m}^i-1}{\zeta_{l^m}^j-1} = \zeta_{l^m}^i \frac{\zeta_{l^m}^{j-i}-1}{\zeta_{l^m}^j-1} \in \cO_{K,S}^{\times}. \qedhere \]
\end{proof}

Another way of producing points in $X(\cO_{K,S})$ is described in \cite{siksek-visser}. It uses the cyclotomic polynomials $\Phi_n(X)\coloneqq \prod_{i\in(\bZ/n\bZ)^\times}(X-\zeta_n^i) \in \bZ[X]$ and is based on the following observation.

\begin{lemma}
\label{cyclotomic-polynomial-S-unit}
    If $l^m \nmid n$ then $\Phi_n(\zeta_{l^m}) \in \cO_{K,S}^\times$. 
\end{lemma}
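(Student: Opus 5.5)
Since $\Phi_n(\zeta_{l^m})$ is an algebraic integer, it suffices to show that $\Phi_n(\zeta_{l^m})$ divides a unit of $\cO_{K,S}$. Recall that $\cO_{K,S}^\times$ contains $1-\zeta_{l^m}^i$ for every $i$ with $l^m\nmid i$, as noted above via \cite[Lemma~8]{siksek-visser}.

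\emph{Step 1: factorise and take norms.} Write
\[ \Phi_n(\zeta_{l^m})=\prod_{i\in(\bZ/n\bZ)^\times}(\zeta_{l^m}-\zeta_n^i). \]
Each factor lies in $\bQ(\zeta_{l^m},\zeta_n)$. Each is a root of unity times $1-\zeta$, where $\zeta=\zeta_n^i\zeta_{l^m}^{-1}$ is a root of unity of order $r$. If $r$ is not a prime power, then $1-\zeta$ is a unit. If $r=q^k$ for a prime $q$, then $1-\zeta$ divides $q$, so its only prime divisors lie above~$q$. Hence the only primes that can divide $\Phi_n(\zeta_{l^m})$ lie above primes $q$ for which some $\zeta_n^i\zeta_{l^m}^{-1}$ has $q$-power order.

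\emph{Step 2: reduce modulo a prime not above $l$.} Suppose $\fl$ is a prime of $K$ not above $l$ with $\Phi_n(\zeta_{l^m})\equiv 0 \bmod \fl$, and let $q$ be its residue characteristic. Then in the residue field, the image $\bar\zeta$ of $\zeta_{l^m}$ has exact order $l^m$, since $x^{l^m}-1$ is separable mod $q\neq l$. Write $n=q^a n'$ with $q\nmid n'$. Mod $q$ we have $\Phi_n\equiv \Phi_{n'}^{\varphi(q^a)}$, so $\bar\zeta$ is a root of $\Phi_{n'}$ mod $q$. Since $q\nmid n'$, the roots of $\Phi_{n'}$ mod $q$ have exact order $n'$, so $n'=l^m$.

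\emph{Step 3: conclude.} Then $n=q^a l^m$ with $a\ge0$, contradicting the hypothesis $l^m\nmid n$. Therefore $\Phi_n(\zeta_{l^m})$ is divisible only by primes above~$l$. In $K$ the prime $l$ is totally ramified, with unique prime $(1-\zeta_{l^m})\in S$. So $\Phi_n(\zeta_{l^m})$ is an $S$-unit, provided it is nonzero. It is nonzero because $\zeta_{l^m}$ has order $l^m\neq n$.

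The main point requiring care is Step 2. One must use that reduction mod $q\nmid l$ preserves the order of $\zeta_{l^m}$, and the congruence $\Phi_{q^a n'}\equiv\Phi_{n'}^{\varphi(q^a)}\pmod q$. The latter follows from $\Phi_{q^a n'}(X)=\Phi_{n'}(X^{q^a})/\Phi_{n'}(X^{q^{a-1}})$ for $a\ge1$ together with Frobenius. Step 1 is then not strictly needed; the argument of Steps 2–3 alone suffices.
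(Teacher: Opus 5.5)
Your proof is correct, but it takes a considerably longer route than the paper. The paper's argument is exactly the reduction you state in your first sentence and then do not carry out. Since $\Phi_n(X)$ divides $X^n-1$ in $\bZ[X]$, evaluating at $\zeta_{l^m}$ shows that $\Phi_n(\zeta_{l^m})$ divides $\zeta_{l^m}^n-1$ in $\bZ[\zeta_{l^m}]$. The element $\zeta_{l^m}^n-1$ is an $S$-unit because $l^m\nmid n$; this is the fact from \cite[Lemma~8]{siksek-visser} that you quote. An element of $\cO_K$ dividing an $S$-unit is itself an $S$-unit, and nonvanishing is automatic since $\zeta_{l^m}^n-1\neq 0$.

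Your Steps 2--3 instead determine the prime support of $\Phi_n(\zeta_{l^m})$ directly. You reduce modulo a prime $\fl\nmid l$ of residue characteristic $q$, and use $\Phi_{q^an'}\equiv\Phi_{n'}^{\varphi(q^a)}\pmod q$ together with the separability of $X^{n'}-1$ and $X^{l^m}-1$ modulo $q$ to force $n=q^al^m$. You then use the total ramification of $l$ in $K$ in place of the $S$-unit property of $1-\zeta_{l^m}^i$, and add a separate (correct) nonvanishing check. Every step is sound.

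The extra work buys a statement that adapts to $\Phi_n(\zeta_N)$ for arbitrary $N$: a prime of residue characteristic $q\nmid N$ can divide $\Phi_n(\zeta_N)$ only if $n=q^aN$. For the lemma at hand, though, the one-line divisibility argument is all that is needed. Your Step 1 is, as you say, superfluous. As written it also does not reach a conclusion (the announced norms are never taken), so it is best removed.
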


\begin{proof}
    Since $\Phi_n(X) \mid (X^n - 1)$, we have $\Phi_n(\zeta_{l^m}) \mid (\zeta_{l^m}^n - 1)$, which is an $S$-unit when $l^m \nmid n$, as observed above.
\end{proof}

Following \cite{siksek-visser}, say that a polynomial $F \in \bZ[X]$ is \emph{supercyclotomic} if it is of the form $X^r \Phi_{n_1}(X)\cdots\Phi_{n_k}(X)$. By \Cref{cyclotomic-polynomial-S-unit}, $F(\zeta_{l^m}) \in \cO_{K,S}^{\times}$ for such a polynomial when none of the $n_i$ is divisible by $l^m$. Assume we have a relation of the form $F -G = kH$ with $F,G,H$ supercyclotomic and $k$ a positive integer. Plugging in $\zeta_{l^m}$ yields
\[ \frac{F(\zeta_{l^m})}{k H(\zeta_{l^m})} - \frac{G(\zeta_{l^m})}{k H(\zeta_{l^m})} = 1, \]
which shows that $\frac{F(\zeta_{l^m})}{k H(\zeta_{l^m})} \in X(\cO_{K,S})$ whenever $F(\zeta_{l^m}), G(\zeta_{l^m}), H(\zeta_{l^m})$ and~$k$ are $S$-units. Examples of such ternary relations of supercyclotomic polynomials given in \cite{siksek-visser} are:
\begin{alignat*}{2}
    \Phi_2(X)^2-\Phi_3(X)&=X, \qquad&
    \Phi_2(X)^2-\Phi_4(X)&=2X,\\
    \Phi_2(X)^2-\Phi_1(X)^2&=4X, \qquad&
    \Phi_2(X)^4-\Phi_1(X)^4&=8X\Phi_4(X).
\end{alignat*}

In summary, we have the following methods to systematically produce points in $X(\cO_{K,S})$:
\begin{center}
\begin{tabularx}{\textwidth}{@{}cX@{}} 
 \textbf{Source 1:} & Roots of unity $\zeta_{l^m}^i$ for $1\leq i<l^m$; \\
 \textbf{Source 2:} & The points $(\zeta_{l^m}^i-1)/(\zeta_{l^m}^j-1)$ from \Cref{lem:basicpts}; \\
 \textbf{Source 3:} & Points from relations of supercyclotomic polynomials.
\end{tabularx}
\end{center}

To get further points, we can use the $\mathrm{Gal}(K/\bQ)$-action and the natural $S_3$-action on $\bP^1 \smallsetminus \{0,1,\infty\}$ generated by $z \mapsto 1/z$ and $z \mapsto 1-z$. The $S_3$-orbit of an $S$-integral point~$z$ is $\left\{z,1-z,\frac{1}{z},\frac{1}{1-z},\frac{z-1}{z},\frac{z}{z-1}\right\}$.

\begin{example}
    Let $K=\bQ(\zeta_{2^m})$ and $S=\left\{(1-\zeta_{2^m})\right\}$ where $m \geq 2$. Then $\varepsilon=\frac{\Phi_2(\zeta_{2^m})^2}{\zeta_{2^m}}$ and $\delta=-\frac{\Phi_3(\zeta_{2^m})}{\zeta_{2^m}}$ are $S$-integral points on $\bP^1\smallsetminus\left\{0,1,\infty\right\}$ with $\eps + \delta = 1$ by the first supercyclotomic polynomial relation above. Similarly, we get two points from the third relation, and when $m \geq 3$ we get two points each from the second and fourth relation.

    Let us take $m=3$ for a concrete example. Then eight points can be produced using the four relations:
\begin{alignat*}{3}
    \varepsilon_1&=\frac{\Phi_2(\zeta_8)^2}{\zeta_8}
    =2+\zeta_8-\zeta_8^3,\qquad
    &&\delta_1=-1-\zeta_8+\zeta_8^3; \\
    \varepsilon_2&=\frac{\Phi_2(\zeta_8)^2}{2\zeta_8}
    =1+\frac{1}{2}\zeta_8-\frac{1}{2}\zeta_8^3,\qquad
    &&\delta_2=-\frac{1}{2}\zeta_8+\frac{1}{2}\zeta_8^3;\\
    \varepsilon_3&=\frac{\Phi_2(\zeta_8)^2}{4\zeta_8}
    =\frac{1}{2}+\frac{1}{4}\zeta_8-\frac{1}{4}\zeta_8^3,\qquad
    &&\delta_3=\frac{1}{2}-\frac{1}{4}\zeta_8+\frac{1}{4}\zeta_8^3; \\
    \varepsilon_4&=\frac{\Phi_2(\zeta_8)^4}{8\zeta_8\Phi_4(\zeta_8)}
    =\frac{1}{2}+\frac{3}{8}\zeta_8-\frac{3}{8}\zeta_8^3,\qquad
    &&\delta_4=\frac{1}{2}-\frac{3}{8}\zeta_8+\frac{3}{8}\zeta_8^3.
\end{alignat*}

    To produce more points, consider the Galois action. Denote by $\sigma_i\in \mathrm{Gal}(K/\bQ)$ the automorphism sending $\zeta_8\mapsto -\zeta_8$. Then $\sigma_i\varepsilon_1=2-\zeta_8+\zeta_8^3$ gives a new point. Applying $\sigma_i$ to all eight points above, we obtain $12$ points in total.
    
    \begin{lemma}[{\cite[Lemma~15]{siksek-visser}}]
    Denote by $\sigma_+\in \mathrm{Gal}(\bQ(\zeta_{l^m})/\bQ)$ the complex conjugation. If $l^m\nmid n$, then
    $$\frac{\sigma_+\Phi_n(\zeta_{l^m})}{\Phi_n(\zeta_{l^m})}=\begin{cases}\zeta_{l^m}^{-\varphi(n)} & n\geq 2,\\
    -\zeta_{l^m}^{-1} & n=1.\end{cases}$$
    \end{lemma}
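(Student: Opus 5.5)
The plan is a direct computation with the product formula for cyclotomic polynomials. Write $\zeta=\zeta_{l^m}$ and recall that $\sigma_+$ acts on $K$ by $\zeta\mapsto\zeta^{-1}$. Since $\Phi_n$ has integer coefficients, we have $\sigma_+\Phi_n(\zeta)=\Phi_n(\zeta^{-1})$. The statement therefore reduces to a polynomial identity in $\bZ[X]$ for the reciprocal of $\Phi_n$, evaluated at $X=\zeta$. The hypothesis $l^m\nmid n$ only serves to ensure that $\Phi_n(\zeta)\neq 0$: the roots of $\Phi_n$ are the primitive $n$-th roots of unity, and $\zeta$ is one of them only if $n=l^m$. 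Hence the quotient is defined.

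\textbf{The case $n=1$.} Here $\Phi_1(X)=X-1$, so
\[ \frac{\zeta^{-1}-1}{\zeta-1}=\frac{\zeta^{-1}(1-\zeta)}{\zeta-1}=-\zeta^{-1}, \]
as claimed.

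\textbf{The case $n\ge 2$.} The key step is the self-reciprocity identity
\[ X^{\varphi(n)}\Phi_n(X^{-1})=\Phi_n(X). \]
To prove it, expand
\[ X^{\varphi(n)}\Phi_n(X^{-1})=\prod_{i\in(\bZ/n\bZ)^\times}(1-\zeta_n^iX)=\prod_i(-\zeta_n^i)\prod_i(X-\zeta_n^{-i}). \]
Inversion $i\mapsto -i$ permutes $(\bZ/n\bZ)^\times$, so the second product equals $\Phi_n(X)$. It remains to show that $\prod_i(-\zeta_n^i)=1$.
\begin{itemize}
\item The sign factor is $(-1)^{\varphi(n)}=1$, because $\varphi(n)$ is even for $n\ge 3$.
\item The product of the primitive $n$-th roots of unity is $1$ for $n\ge 3$, since they pair off as $\zeta_n^i,\zeta_n^{-i}$ with $i\neq -i$.
\end{itemize}
Equivalently, $\prod_i(-\zeta_n^i)=\Phi_n(0)$, which equals $1$ for $n\ge 2$ via Möbius inversion of $X^n-1=\prod_{d\mid n}\Phi_d(X)$.

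The case $n=2$ is checked separately: $\varphi(2)=1$ and $\Phi_2(X)=X+1$, so $X\Phi_2(X^{-1})=1+X=\Phi_2(X)$ directly.

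Substituting $X=\zeta$ into the identity gives $\Phi_n(\zeta^{-1})=\zeta^{-\varphi(n)}\Phi_n(\zeta)$. Dividing by $\Phi_n(\zeta)\neq 0$ yields the claim.

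\textbf{Main obstacle.} The only real care point is the parity and root-product argument behind $\Phi_n(0)=1$ for $n\ge 2$, which makes $\Phi_n$ palindromic. Here $n=2$ must be treated separately, and $n=1$ is exactly where the anti-palindromic sign $-1$ appears.
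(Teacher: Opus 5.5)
Your proof is correct and complete: reducing to $\Phi_n(\zeta_{l^m}^{-1})$, checking $n=1$ directly, and using the reciprocity identity $X^{\varphi(n)}\Phi_n(X^{-1})=\Phi_n(X)$ for $n\ge 2$ (equivalently $\Phi_n(0)=1$) gives exactly the claimed formula. The paper gives no proof of its own and simply quotes Siksek--Visser, and your palindromicity argument is the standard proof of this fact.
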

    
    Returning to our example $K=\bQ(\zeta_{8})$, we have $\sigma_+\Phi_2(\zeta_{8})=\zeta_8^{-1}\Phi_2(\zeta_8)$ by the lemma, and thus $\sigma_+\varepsilon_i=\varepsilon_i$ and $\sigma_+\delta_i=\delta_i$ for $i=1,2,3$. Similarly, we have $\sigma_+ \Phi_4(\zeta_8) = \zeta_8^{-2} \Phi_4(\zeta_8)$, which implies $\sigma_+\eps_4 = \eps_4$ and $\sigma_+ \delta_4 = \delta_4$. So the automorphism $\sigma_+$ does not produce any new solutions. 
    The $S_3$-action adds $12$ further points to those obtained from source 3, resulting in a total of 24 points. Source 2 contributes $42$ points, since the elements $
       (\zeta_8^i-1)/(\zeta_8^j-1)$ for $ 1\leq i\ne j<8$
    are pairwise distinct, and neither the Galois action nor the $S_3$-action produces new points. Source 1 (after Galois and $S_3$-action) produces $12$ points. 
    A direct computation shows that sources 2 and 3 are disjoint, while source 1 contributes exactly $3$ new points outside their union. Thus the three sources give $24+42+3=69$
    points in total. Since $\# X(\cO_{K,S})=75$, the remaining six points are
    \begin{gather*}
        -2+2\zeta_8-2\zeta_8^3,\quad 3-2\zeta_8+2\zeta_8^3,\quad 3+2\zeta_8-2\zeta_8^3,\\ -2-2\zeta_8+2\zeta_8^3,\quad \tfrac{1}{2}+\tfrac{1}{2}\zeta_8-\tfrac{1}{2}\zeta_8^3,\quad \tfrac{1}{2}-\tfrac{1}{2}\zeta_8+\tfrac{1}{2}\zeta_8^3.
    \end{gather*}
\end{example}

\section{Equations for \texorpdfstring{$K=\bQ(\zeta_8)$, $S=\left\{(1-\zeta_8)\right\}$}{K = Q(ζ8), S = \{(1 − ζ8)\}}}
Let $K=\bQ(\zeta_8)$, $S=\left\{(1-\zeta_8)\right\}$. Set 
$$\alpha\coloneqq 1+\zeta_8^2, \qquad \beta\coloneqq 1+\zeta_8+\zeta_8^{-1}=1+\sqrt{2}.$$
Then $\left\{\alpha,\beta\right\}$ is a $\bQ$-basis of $\cO_{K,S}^\times\otimes \bQ$. 
We have $\mathrm{Gal}(K/\bQ)=\mathrm{Gal}(K/\bQ(\zeta_8)^+)\times \mathrm{Gal}(K/\bQ(i))\simeq \bZ/2\bZ\times\bZ/2\bZ$ with generators the complex conjugation $\sigma_+\in \mathrm{Gal}(K/\bQ(\zeta_8)^+)$ and $\sigma_i\in \mathrm{Gal}(K/\bQ(i))$ with $\sigma_i(\zeta_8)=-\zeta_8$. Here, $\bQ(\zeta_8)^+ = \bQ(\sqrt{2})$ denotes the maximal totally real subfield of $K$.
Let $p$ be a rational prime which splits completely in~$K$, i.e.\ $p\equiv 1\pmod{8}$. Then there are four places above $p$ in $K$, i.e., $p\cO_K=\fp_1\fp_2\fp_3\fp_4$. Arrange the indices so that $\fp_2=\sigma_+^*\fp_1$, $\fp_3=\sigma_i^*\fp_1$, and $\fp_4=(\sigma_+\sigma_i)^*\fp_1$. 
\subsection{The Galois action on the motivic Lie algebra}
The degree-$(-n)$ part of the abelianisation of $L_S^{\MT}$ is isomorphic to $(K_{2n-1}(\cO_{K,S})\otimes\bQ)^\vee$, so the first step in studying the Galois action on $L_S^{\MT}$ is to study the Galois action on rational $K$-groups. 
By \Cref{lem:Kdim}, $\dim_{\bQ} K_{2n-1}(\cO_{K,S})\otimes\bQ=2$ for all $n\geq 1$ in our setting.

\begin{lemma}
\label{lem:galoislem}
    For all $n \geq 2$ there exist $\sigma_{n,1}, \sigma_{n,2} \in (L_S^{\MT})_{-n}$ mapping to a basis of $(L_S^{\MT})_{-n}^{\ab}$ satisfying
    \begin{equation}
        \sigma_+^* \sigma_{n,j} = (-1)^{n+1} \sigma_{n,j}, \quad \sigma_i^* \sigma_{n,j} = (-1)^{j+1} \sigma_{n,j} \qquad \text{for $j=1,2$.}
    \end{equation}
\end{lemma}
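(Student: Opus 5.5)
The plan is to realise the finite group $G=\Gal(K/\bQ)=\langle\sigma_+,\sigma_i\rangle\cong(\bZ/2\bZ)^2$ as a group of graded Lie algebra automorphisms of $L_S^{\MT}$, decompose $(L_S^{\MT})^{\ab}_{-n}$ into $G$-isotypic pieces, and then lift eigenvectors by averaging.

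First, $S=\{(1-\zeta_8)\}$ is Galois-stable, so $\sigma_*S=S$ for every $\sigma\in G$. The construction preceding \Cref{thm:GalactionmotivicSel} then gives $\bG_m$-equivariant automorphisms $\sigma^*$ of $U_S^{\MT}$ and of $L_S^{\MT}$. Being $\bG_m$-equivariant, they preserve the half-weight grading. Functoriality $(\sigma\tau)_*=\sigma_*\tau_*$ shows that $\sigma\mapsto\sigma^*$ is an (anti-)action of $G$. Since $G$ is abelian, this is an honest action. It descends to the abelianisation
\[ (L_S^{\MT})^{\ab}_{-n}\cong (K_{2n-1}(\cO_{K,S})\otimes\bQ)^\vee, \]
and there it is the dual of the natural Galois action on $K$-theory, by naturality of the isomorphism of \cite[Eq.~(2.3.11)]{deligne-goncharov} with respect to base change.

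The main step, and the one I expect to be the real obstacle, is identifying the $G$-module $K_{2n-1}(\cO_{K,S})\otimes\bQ$ for $n\ge2$. Localisation gives $K_{2n-1}(\cO_{K,S})\otimes\bQ=K_{2n-1}(K)\otimes\bQ$ for $n\ge 2$. I would use Borel's theorem in its equivariant form. The Borel regulator gives a $G$-equivariant isomorphism
\[ K_{2n-1}(K)\otimes\bR\xrightarrow{\sim}\Bigl(\textstyle\bigoplus_{\tau\colon K\hookrightarrow\bC}(2\pi i)^{n-1}\bR\Bigr)^{+}, \]
where the target is the set of fixed points under complex conjugation acting simultaneously on the embeddings and on the coefficients. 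Here $G$ acts by permuting the embeddings; for our abelian $K$ this is the regular representation. Complex conjugation multiplies $(2\pi i)^{n-1}$ by $(-1)^{n-1}$. Hence the target is the $(-1)^{n+1}$-eigenspace of $\sigma_+$ in $\bR[G]$.

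Since $\bQ$-representations of $G$ are determined by their characters after tensoring with $\bR$, we get
\[ K_{2n-1}(K)\otimes\bQ\cong\bQ[G]^{\sigma_+=(-1)^{n+1}}=\chi_{n,1}\oplus\chi_{n,2}, \]
where $\chi_{n,j}$ is the character with $\chi_{n,j}(\sigma_+)=(-1)^{n+1}$ and $\chi_{n,j}(\sigma_i)=(-1)^{j+1}$. This matches $d_n=2$ from \Cref{lem:Kdim}. All characters of $G$ are $\pm1$-valued, so the dual has the same decomposition. Thus $(L_S^{\MT})^{\ab}_{-n}$ has a basis $v_{n,1},v_{n,2}$ on which $\sigma_+^*$ acts by $(-1)^{n+1}$ and $\sigma_i^*$ acts by $(-1)^{j+1}$. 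As a cross-check, one could instead use the cyclotomic (Deligne--Soulé) elements attached to $\Li_n(\zeta_8^a)$, which span $K_{2n-1}(K)\otimes\bQ$ and satisfy the distribution and conjugation relation $\Li_n(\bar\zeta)\leftrightarrow(-1)^{n-1}\Li_n(\zeta)$.

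Finally, lift each $v_{n,j}$ to an arbitrary $s\in(L_S^{\MT})_{-n}$ and set
\[ \sigma_{n,j}\coloneqq e_{\chi_{n,j}}s,\qquad e_\chi=\tfrac14\sum_{g\in G}\chi(g)\,g^*. \]
Because $G$ acts $\bQ$-linearly and preserves degree, $\sigma_{n,j}$ again lies in $(L_S^{\MT})_{-n}$. Because the abelianisation map is $G$-equivariant, $\sigma_{n,j}$ maps to $e_{\chi_{n,j}}v_{n,j}=v_{n,j}$. Because $e_\chi$ is the $\chi$-isotypic projector, $g^*\sigma_{n,j}=\chi_{n,j}(g)\sigma_{n,j}$ for all $g\in G$. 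This gives exactly the eigenvalue relations claimed in \Cref{lem:galoislem}.
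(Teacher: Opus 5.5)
Your proof is correct, but it identifies the Galois module by a different route from the paper's.

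\textbf{The paper's route.} The paper never uses the regulator. It combines the descent isomorphism $K_{2n-1}(L)\otimes\bQ\cong (K_{2n-1}(K)\otimes\bQ)^{\Gal(K/L)}$ of \cite[Eq.~(2.16.2)]{deligne-goncharov} with Borel's dimension count (\Cref{lem:Kdim}), applied to the two intermediate fields $L=\bQ(i)$ and $L=\bQ(\zeta_8)^+$.
\begin{itemize}
\item For $L=\bQ(i)$, the fixed space of $\sigma_i$ on the two-dimensional space $K_{2n-1}(K)\otimes\bQ$ is one-dimensional, so $\sigma_i$ has eigenvalues $+1$ and $-1$.
\item For $L=\bQ(\zeta_8)^+$, the fixed space of $\sigma_+$ has dimension $2$ or $0$ according as $n$ is odd or even, so $\sigma_+$ acts as the scalar $(-1)^{n+1}$.
\end{itemize}
Any eigenbasis of $\sigma_i$ then works.

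\textbf{Your route.} You compute the whole $G$-module at once, $K_{2n-1}(K)\otimes\mathbb{R}\cong\mathbb{R}[G]^{\sigma_+=(-1)^{n+1}}$, via the equivariant Borel regulator, and then descend to $\bQ$ by characters. This needs more input: Galois-equivariance of the regulator, and the identity $\bar\tau=\tau\circ\sigma_+$ for every embedding $\tau$. You use the latter implicitly; it holds because $K$ is abelian, and it deserves a line of justification. In return you get the full module structure $\bQ[G]^{\sigma_+=(-1)^{n+1}}$, which carries over directly to other abelian fields. The paper needs only Borel's numerical count. That suffices here because, for $G=(\bZ/2\bZ)^2$, the fixed-space dimensions of the two generators already determine the representation.

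\textbf{Lifting step.} This is the same in substance. The paper invokes Maschke's theorem to get a $G$-equivariant section of $(L_S^{\MT})_{-n}\to(L_S^{\MT})^{\ab}_{-n}$. Your isotypic projectors $e_\chi$ are the explicit form of that argument.

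Your remark that $\sigma\mapsto\sigma^*$ is a priori an anti-action is correct, and it is harmless because $G$ is abelian. The paper passes over this point silently.
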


\begin{proof}
For a subfield $L$ of $K$, we have \cite[Eq.~(2.16.2)]{deligne-goncharov}
\[ K_{2n-1}(L) \otimes \bQ \cong (K_{2n-1}(K) \otimes \bQ)^{\Gal(K/L)}. \]
In our case $K=\bQ(\zeta_8)$. If we take $L=\bQ(i)$, then 
\begin{align*}
    (K_{2n-1}(K) \otimes \bQ)^{\sigma_i}&\simeq K_{2n-1}(\bQ(i))\otimes \bQ\\
    \text{ has $\bQ$-dimension}&=\begin{cases}r_1(\bQ(i))+r_2(\bQ(i))=1, & n\geq 3\text{ odd},\\
r_2(\bQ(i))=1, & n\geq 2\text{ even}.\end{cases}
\end{align*}
So there is a $\bQ$-basis $\tilde{\sigma}_{n,j}$ ($j=1,2$) of $(K_{2n-1}(K) \otimes \bQ)^{\vee}$ such that $\sigma_i^*\tilde{\sigma}_{n,j}=(-1)^{j+1}\tilde{\sigma}_{n,j}$ for $n\geq 2$, $j=1,2$.

If we take $L=\bQ(\zeta_8)^+$, then 
\begin{align*}
    (K_{2n-1}(K) \otimes \bQ)^{\sigma_+}&\simeq K_{2n-1}(\bQ(\zeta_8)^+)\otimes \bQ\\
    \text{ has $\bQ$-dimension}&=\begin{cases}r_1(\bQ(\zeta_8)^+)+r_2(\bQ(\zeta_8)^+)=2, & n\geq 3\text{ odd},\\
r_2(\bQ(\zeta_8)^+)=0, & n\geq 2\text{ even}.\end{cases}
\end{align*}
So $\sigma_+$ acts on $K_{2n-1}(K) \otimes \bQ$ (and hence its dual) as the identity for odd $n\geq 3$, and as multiplication by $-1$ for even $n\geq 2$. That means, $\sigma_+^*\tilde{\sigma}_{n,j}=(-1)^{n+1}\tilde{\sigma}_{n,j}$ for $n\geq 2$, $j=1,2$.

Finally, since $(L_S^{\MT})_{-n}\twoheadrightarrow (L_S^{\MT})_{-n}^{\ab}\simeq (K_{2n-1}(K) \otimes \bQ)^{\vee}$ is a $\Gal(K/\bQ)$-equivariant map, we may choose a $\Gal(K/\bQ)$-equivariant splitting by Maschke's Theorem, which provides the desired lifts $\sigma_{n,j}\in (L_S^{\MT})_{-n}$ of $\tilde{\sigma}_{n,j}\in (L_S^{\MT})_{-n}^{\ab}$ preserving the Galois relations, i.e., $\sigma_+^* \sigma_{n,j} = (-1)^{n+1} \sigma_{n,j}$ and $\sigma_i^* \sigma_{n,j} = (-1)^{j+1}\sigma_{n,j}$ for $n\geq 2$, $j=1,2$.
\end{proof}

\subsection{Deriving equations in depth 2}
\label{subsec:eqdepth2}
$L_S^{\MT}$ is the free graded pro-nilpotent Lie algebra generated by $\tau_\alpha$, $\tau_\beta$ in degree $-1$ (which are dual to $\log^{\fu}(\alpha),\log^{\fu}(\beta)$ and ordered as $(\tau_{\beta},\tau_{\alpha})$), and $\sigma_{n,1}$, $\sigma_{n,2}$ in degree $-n$ for each $n\geq 2$. 
Write $\chi_1:=\sigma_{2,1}$ and $\chi_2:=\sigma_{2,2}$. 
Then, by \Cref{lem:galoislem} and direct computation of the Galois action on $\cO_{K,S}^{\times} \otimes \bQ$, the action on the chosen generators of $L_S^{\MT}$ is given as follows:
\begin{align*}
\sigma_+^*\tau_\alpha=\tau_\alpha,\quad 
\sigma_i^*\tau_\alpha=\tau_\alpha,\quad &
\sigma_+^*\tau_\beta=\tau_\beta,\quad 
\sigma_i^*\tau_\beta= -\tau_\beta,\\
\sigma_+^*\chi_1=-\chi_1,\quad 
\sigma_i^*\chi_1=\chi_1,\quad &
\sigma_+^*\chi_2=-\chi_2,\quad 
\sigma_i^*\chi_2=-\chi_2.
\end{align*}

By Eqs.~(\ref{eq:tauimapto}) and~(\ref{eq:sigmanmapto}), coordinates $x_{\alpha}, y_{\alpha}, x_{\beta}, y_{\beta}, z_1, z_2$ on the depth-2 Selmer scheme are given by
\begin{equation}
\label{eq:depth2-parametrisation}
\begin{alignedat}{2}
\xi(\tau_\alpha)&=x_\alpha(\xi)e_0+y_\alpha(\xi)e_1, & \qquad
\xi(\tau_\beta)&=x_\beta(\xi)e_0+y_\beta(\xi)e_1,\\
\xi(\chi_1)&=z_1(\xi)[e_0,e_1], & \qquad
\xi(\chi_2)&=z_2(\xi)[e_0,e_1].
\end{alignedat}
\end{equation}

The localisation map $\loc_p$ is the product of the evaluation maps at four places $\mathrm{ev}_{\eps_1}\times \cdots \times \mathrm{ev}_{\eps_4}$. Write the $\fp_1$-adic period point
\begin{equation}
\label{eq:firsteta1}
  \eps_1\coloneqq \varepsilon_{\fp_1}=c_{\tau_\alpha}\tau_\alpha+c_{\tau_\beta}\tau_\beta+c_{\chi_1}\chi_1+c_{\chi_2}\chi_2+c_{\tau_\alpha\tau_\beta}[\tau_\alpha,\tau_\beta]+(\text{degree $<-2$})  
\end{equation}
with $c_{\tau_\alpha},c_{\tau_\beta},c_{\chi_1},c_{\chi_2},c_{\tau_\alpha\tau_\beta}\in\bQ_p$. Since we are at depth $2$, we can neglect the terms of degree $<-2$.
We know that $c_{\tau_\alpha}=\log(\alpha)$, $c_{\tau_\beta}=\log(\beta)$ by \Cref{lem:logalpha}, using the embedding $K \hookrightarrow K_{\fp_1} = \bQ_p$ for the $p$-adic logarithms. We now determine the value of $c_{\tau_\alpha\tau_\beta}$. 

Note that $\beta$ is an $S$-integral point with $1- \beta = \zeta_8^3 \alpha$. Let $\xi_\beta$ denote its image under the global Kummer map $j_S$ in the global Selmer scheme. By \Cref{lem:globalKummervaluation}, 
$$x_{\alpha}(\xi_\beta)=0,\quad y_{\alpha}(\xi_\beta)=-1,\quad x_\beta(\xi_\beta)=1,\quad y_\beta(\xi_\beta)=0.$$
Thus,
$$\xi_\beta(\tau_\alpha)=-e_1,\quad \xi_\beta(\tau_\beta)=e_0,\quad \xi_\beta([\tau_\alpha,\tau_\beta])=[e_0,e_1].$$
Evaluating $\xi_\beta$ on both sides of \eqref{eq:firsteta1} gives
$$\log(\beta)e_0+L_1(\beta)e_1+L_2(\beta)[e_0,e_1]=\log(\beta)e_0-\log(\alpha)e_1+(c_{\chi_1}z_1(\xi_\beta)+c_{\chi_2}z_2(\xi_\beta)+c_{\tau_\alpha\tau_\beta})[e_0,e_1].$$
Comparing the coefficients of $[e_0,e_1]$ gives
\begin{equation}
\label{eq:cL2beta1}
    L_2(\beta)=c_{\chi_1}z_1(\xi_\beta)+c_{\chi_2}z_2(\xi_\beta)+c_{\tau_\alpha\tau_\beta}.
\end{equation}
We claim that $z_1(\xi_{\beta}) = z_2(\xi_{\beta}) = 0$. Indeed, $\beta = 1 + \sqrt{2}$ is fixed by complex conjugation, hence $\xi_{\beta} = (\sigma_+)_*(\xi_{\beta}) = \xi_{\beta} \circ \sigma_+^*$ by Galois equivariance of the global Kummer map and \Cref{thm:GalactionmotivicSel}. On the other hand, $\sigma_+^* \chi_j = -\chi_j$ for $j=1,2$, hence
\[ \xi_{\beta}(\chi_j) = (\xi_{\beta} \circ \sigma_+^*)(\chi_j) = -\xi_{\beta}(\chi_j), \]
which implies $\xi_{\beta}(\chi_j) = 0$ and thus $z_j(\xi_{\beta}) = 0$ for $j=1,2$ as claimed. Now \eqref{eq:cL2beta1} yields
\begin{equation}
\label{eq:ctaualphataubeta}
   c_{\tau_\alpha\tau_\beta}=L_2(\beta). 
\end{equation}
As a result, the expansion of $\eps_1$ in the chosen generators of $L_S^{\MT}$ is given by
$$\eps_1 = \log(\alpha)\tau_\alpha+\log(\beta)\tau_\beta+c_{\chi_1}\chi_1+c_{\chi_2}\chi_2+L_2(\beta)[\tau_\alpha,\tau_\beta] + (\text{degree $<-2$}).$$

Recall we arranged the indices so that $\fp_2=\sigma_+^*\fp_1$, $\fp_3=\sigma_i^* \fp_1$, $\fp_4=(\sigma_+\sigma_i)^*\fp_1$. By \Cref{thm:Galpermuteseta}, $\eps_2=\sigma_+^*\eps_1$, $\eps_3=\sigma_i^* \eps_1$, $\eps_4=(\sigma_+\sigma_i)^* \eps_1$. So we apply the Galois action to get the $\fp_i$-adic period points for $i=2,3,4$ up to depth $2$:
\begin{align*}
\eps_2&\coloneqq \varepsilon_{\fp_2}=\log(\alpha)\tau_\alpha+\log(\beta)\tau_\beta-c_{\chi_1}\chi_1-c_{\chi_2}\chi_2+L_2(\beta)[\tau_\alpha,\tau_\beta] + \ldots ,\\
\eps_3&\coloneqq \varepsilon_{\fp_3}=\log(\alpha)\tau_\alpha-\log(\beta)\tau_\beta+c_{\chi_1}\chi_1-c_{\chi_2}\chi_2-L_2(\beta)[\tau_\alpha,\tau_\beta] + \ldots,\\
\eps_4&\coloneqq \varepsilon_{\fp_4}=\log(\alpha)\tau_\alpha-\log(\beta)\tau_\beta-c_{\chi_1}\chi_1+c_{\chi_2}\chi_2-L_2(\beta)[\tau_\alpha,\tau_\beta] + \ldots,
\end{align*}
where the ellipses represent elements in degree $< -2$.

As a consequence, the evaluation maps $\ev_{\eps_i}$ in depth~2 are given by
\begin{align*}
\mathrm{ev}_{\eps_1}(\xi)&=(\log(\alpha)x_\alpha(\xi)+\log(\beta)x_\beta(\xi))e_0+(\log(\alpha)y_\alpha(\xi)+\log(\beta)y_\beta(\xi))e_1\\ &+
\left(c_{\chi_1}z_1(\xi)+c_{\chi_2}z_2(\xi)+L_2(\beta)(x_\alpha(\xi)y_\beta(\xi)-x_\beta(\xi)y_\alpha(\xi))\right)[e_0,e_1],
\\
\mathrm{ev}_{\eps_2}(\xi)&=
(\log(\alpha)x_\alpha(\xi)+\log(\beta)x_\beta(\xi))e_0+(\log(\alpha)y_\alpha(\xi)+\log(\beta)y_\beta(\xi))e_1\\ &+
\left(-c_{\chi_1}z_1(\xi)-c_{\chi_2}z_2(\xi)+L_2(\beta)(x_\alpha(\xi)y_\beta(\xi)-x_\beta(\xi)y_\alpha(\xi))\right)[e_0,e_1],\\
\mathrm{ev}_{\eps_3}(\xi)&=(\log(\alpha)x_\alpha(\xi)-\log(\beta)x_\beta(\xi))e_0+(\log(\alpha)y_\alpha(\xi)-\log(\beta)y_\beta(\xi))e_1\\ &+
\left(c_{\chi_1}z_1(\xi)-c_{\chi_2}z_2(\xi)-L_2(\beta)(x_\alpha(\xi)y_\beta(\xi)-x_\beta(\xi)y_\alpha(\xi))\right)[e_0,e_1],\\
\mathrm{ev}_{\eps_4}(\xi)&=(\log(\alpha)x_\alpha(\xi)-\log(\beta)x_\beta(\xi))e_0+(\log(\alpha)y_\alpha(\xi)-\log(\beta)y_\beta(\xi))e_1\\
&+
\left(-c_{\chi_1}z_1(\xi)+c_{\chi_2}z_2(\xi)-L_2(\beta)(x_\alpha(\xi)y_\beta(\xi)-x_\beta(\xi)y_\alpha(\xi))\right)[e_0,e_1].
\end{align*}

Write $\ev_{\varepsilon_i}(\xi)=(X_i,Y_i,Z_i)$, corresponding to $(\log(z_i),L_1(z_i),L_2(z_i))$ in the local coordinates for $i=1,\ldots,4$. From the above we have
$X_1=X_2$, $X_3=X_4$, $Y_1=Y_2$, $Y_3=Y_4$, $Z_1+Z_2=-Z_3-Z_4$, and 
\begin{equation}
\label{eq:xalphaxbeta}
    x_\alpha(\xi)=\frac{X_1+X_3}{2\log(\alpha)},\quad x_\beta(\xi)=\frac{X_1-X_3}{2\log(\beta)},\quad
    y_\alpha(\xi)=\frac{Y_1+Y_3}{2\log(\alpha)},\quad y_\beta(\xi)=\frac{Y_1-Y_3}{2\log(\beta)},
\end{equation}
and the last equation becomes
\begin{equation}
\label{eq:c2}
Z_1+Z_2=c_2\left(-X_3Y_1+X_1Y_3\right) \quad \text{ where }\quad c_2=-\frac{L_2(\beta)}{\log(\alpha)\log(\beta)}.
\end{equation}

If we assume $c_{\chi_1},c_{\chi_2}\ne 0$ (which is implied by the $p$-adic period conjecture by \Cref{period-conjecture-implies-nonvanishing}), then the equations above precisely cut out the scheme-theoretic image of the localisation map. In summary, we have the following.
\begin{theorem}
\label{thm:depth2-equations}
Let $K=\bQ(\zeta_8)$ and $S=\left\{(1-\zeta_8)\right\}$. The depth-2 polylogarithmic Chabauty--Kim locus $X(\cO_K\otimes \bZ_p)_{S,\PL,2}$ is contained in the solution set of the following equations (with the coordinates $(X_i,Y_i,Z_i)=(\log(z_i),L_1(z_i),L_2(z_i))$ for $i=1,2,3,4$):
\begin{equation}
\label{eq:keyeq}
\begin{cases}
    X_1=X_2,\quad X_3=X_4,\\
    Y_1=Y_2,\quad\ Y_3=Y_4,\\
    Z_1+Z_2=-Z_3-Z_4,\\
    Z_1+Z_2=c_2\left(-X_3Y_1+X_1Y_3\right),
\end{cases}
\end{equation}
where $c_2$ is defined in \eqref{eq:c2}. The containment is an equality if $c_{\chi_1},c_{\chi_2}\ne 0$.
\end{theorem}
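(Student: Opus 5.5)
The proof assembles the computation carried out just above the statement. The key input is that the polylogarithmic Selmer scheme is $\Hom_{\gr}(L_S^{\MT},\Lie(\Pi_{\PL,2}^{\omega}))_{\bQ_p}$, an affine space with coordinates $x_\alpha,y_\alpha,x_\beta,y_\beta,z_1,z_2$ by \eqref{eq:Selparametrisation}. By \Cref{Ln-as-pullback}, the local Kummer map in depth~2 is $z\mapsto(\log z, L_1(z), L_2(z))$. So $X(\cO_K\otimes\bZ_p)_{S,\PL,2}$ is the pullback under $(z_i)\mapsto(X_i,Y_i,Z_i)_{i}$ of the scheme-theoretic image of $\ev_{\eps_1}\times\cdots\times\ev_{\eps_4}$. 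It therefore suffices to show two things: this image lies in the subvariety $V$ cut out by \eqref{eq:keyeq}, and it equals $V$ when $c_{\chi_1},c_{\chi_2}\neq 0$.

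\textbf{Containment.} I use the explicit formulas for $\ev_{\eps_i}$ derived above. These come from \Cref{cocycle-evaluation-map} applied to the depth-2 expansion of $\eps_1$, with $c_{\tau_\alpha}=\log\alpha$ and $c_{\tau_\beta}=\log\beta$ by \Cref{lem:logalpha}, and $c_{\tau_\alpha\tau_\beta}=L_2(\beta)$ by \eqref{eq:ctaualphataubeta}. The expansions of $\eps_2,\eps_3,\eps_4$ follow by transporting via \Cref{thm:Galpermuteseta} and the sign rules of \Cref{lem:galoislem}.

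Comparing the formulas gives directly $X_1=X_2$, $X_3=X_4$, $Y_1=Y_2$, $Y_3=Y_4$. Summing the four $[e_0,e_1]$-coefficients gives $Z_1+Z_2+Z_3+Z_4=0$, since the $c_{\chi_j}$-terms and the $L_2(\beta)$-terms cancel in pairs. Adding the first two coefficients gives
\[ Z_1+Z_2=2L_2(\beta)(x_\alpha y_\beta-x_\beta y_\alpha). \]
Substituting \eqref{eq:xalphaxbeta} gives
\[ x_\alpha y_\beta-x_\beta y_\alpha=\frac{(X_1+X_3)(Y_1-Y_3)-(X_1-X_3)(Y_1+Y_3)}{4\log\alpha\log\beta}=\frac{X_3Y_1-X_1Y_3}{2\log\alpha\log\beta}, \]
which yields the last equation with $c_2$ as in \eqref{eq:c2}. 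Here I need $\log\alpha,\log\beta\neq0$. This holds because $\alpha,\beta$ are multiplicatively independent modulo torsion, so their $p$-adic logarithms are nonzero. Strictly, only non-torsion is needed: the logarithm of a non-torsion element of $\bZ_p^\times$ is nonzero.

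\textbf{Equality.} This is the main step. The target $\prod_i\bA^3$ has dimension~12, and $V$ is cut out by 6 independent equations. Solving the linear equations shows $V\cong\bA^6$, with coordinates $X_1,X_3,Y_1,Y_3,Z_1,Z_3$: indeed $Z_2$ is determined by the last equation, and then $Z_4=-Z_1-Z_2-Z_3$.

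I would show the map from the Selmer scheme $\bA^6\to V$ is bijective, even an isomorphism. Given $(X_1,X_3,Y_1,Y_3)$, \eqref{eq:xalphaxbeta} recovers $x_\alpha,x_\beta,y_\alpha,y_\beta$ uniquely. Then $Z_1$ and $Z_3$ determine
\[ c_{\chi_1}z_1+c_{\chi_2}z_2=Z_1-L_2(\beta)(x_\alpha y_\beta-x_\beta y_\alpha), \qquad c_{\chi_1}z_1-c_{\chi_2}z_2=Z_3+L_2(\beta)(x_\alpha y_\beta-x_\beta y_\alpha). \]
This linear system has determinant $-2c_{\chi_1}c_{\chi_2}\neq0$, so it is uniquely solvable. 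Hence the map is an isomorphism onto $V$, its scheme-theoretic image is exactly $V$, and pulling back along the local Kummer map gives equality of loci.
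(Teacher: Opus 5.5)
Your proposal is correct and follows the paper's own derivation. Like the paper, you compare the Galois-transported evaluation maps $\ev_{\eps_i}$ and eliminate $x_\alpha,x_\beta,y_\alpha,y_\beta$ via \eqref{eq:xalphaxbeta}. Your explicit inversion, using the determinants $-2\log(\alpha)\log(\beta)$ and $-2c_{\chi_1}c_{\chi_2}$, shows that the Selmer scheme maps isomorphically onto the graph $V\cong\bA^6$. This is precisely the verification behind the paper's one-line assertion that the equations cut out the scheme-theoretic image when $c_{\chi_1},c_{\chi_2}\neq 0$.
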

\begin{lemma}
\label{lem:cchiinonzero}
    For $i=1,2$, if $\Li_2(\zeta_8)+(-1)^{i+1} \Li_2(-\zeta_8)\ne 0$, then $c_{\chi_i}\ne 0$. This is the case for all $p<200$ which split completely in $\bQ(\zeta_8)$. In particular, the equality result in \Cref{thm:depth2-equations} holds for these~$p$.
\end{lemma}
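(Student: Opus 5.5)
The plan is to evaluate the depth-2 expansion of the period elements at the global Kummer images of well-chosen $S$-integral points. This expresses $c_{\chi_1}$ and $c_{\chi_2}$ in terms of $L_2$-values, mirroring the computation of $c_{\tau_\alpha\tau_\beta}$ in \eqref{eq:ctaualphataubeta}.

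The natural test points are $\zeta_8$ and $-\zeta_8 = \sigma_i(\zeta_8)$, which lie in $X(\cO_{K,S})$. Let $\xi_\zeta := j_S(\zeta_8)$ and $\xi_{-\zeta} := j_S(-\zeta_8)$. By Galois equivariance of $j_S$ and \Cref{thm:GalactionmotivicSel}, we have $\xi_{-\zeta} = \xi_\zeta\circ\sigma_i^*$.

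First compute the half-weight $-1$ coordinates. Since $\zeta_8$ is torsion, \Cref{lem:globalKummervaluation} gives $x_\alpha = x_\beta = 0$ for both points. For $y$, use $1-\zeta_8 \in \cO_{K,S}^\times$ and express it in the basis $\{\alpha,\beta\}$ modulo torsion. We have $(1-\zeta_8)^4 \sim 2 \sim \alpha^2$ up to units, and one checks the unit part against $\beta$. Whatever the values $y_\alpha,y_\beta$ turn out to be, the term $L_2(\beta)(x_\alpha y_\beta - x_\beta y_\alpha)$ vanishes because both $x$'s are zero.

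Comparing the $[e_0,e_1]$-coefficients in $\ev_{\eps_1}(\xi_\zeta)$ and $\ev_{\eps_1}(\xi_{-\zeta})$ with the local Kummer map via \Cref{Ln-as-pullback} then gives
\[ L_2(\zeta_8) = c_{\chi_1} z_1(\xi_\zeta) + c_{\chi_2} z_2(\xi_\zeta),\qquad L_2(-\zeta_8) = c_{\chi_1} z_1(\xi_\zeta) - c_{\chi_2} z_2(\xi_\zeta). \]
The second identity uses $\sigma_i^*\chi_1=\chi_1$ and $\sigma_i^*\chi_2=-\chi_2$. Adding and subtracting yields
\[ L_2(\zeta_8)\pm L_2(-\zeta_8) = 2c_{\chi_i} z_i(\xi_\zeta), \]
with $i=1$ for $+$ and $i=2$ for $-$.

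Next relate $L_2$ to $\Li_2$ at these points. Since $\log(\pm\zeta_8)=0$ $p$-adically (roots of unity), \eqref{eq:Ln} gives $L_2(\pm\zeta_8) = \Li_2(\pm\zeta_8)$. Hence if $\Li_2(\zeta_8) + (-1)^{i+1}\Li_2(-\zeta_8) \neq 0$, then the product $c_{\chi_i} z_i(\xi_\zeta)$ is nonzero, so in particular $c_{\chi_i}\neq 0$. Notably, we never need to know $z_i(\xi_\zeta)$ explicitly; only the implication matters.

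The numerical claim for $p<200$ with $p\equiv 1 \pmod 8$ (namely $p=17,41,73,89,97,113,137,193$) is a finite computation. For each such $p$, embed $\zeta_8$ into $\bQ_p$ as a root of $x^4+1$ and compute the $p$-adic dilogarithms $\Li_2(\pm\zeta_8)$ via Coleman integration, e.g. in SageMath. Since $\zeta_8$ is a residue-disc point distinct from $0,1,\infty$ mod $p$, the standard Teichmüller-type formulas or the distribution relation $\Li_2(z^n)=n\sum_{w^n=1}\Li_2(wz)$ make this routine. The final claim then follows from \Cref{thm:depth2-equations}.

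The main obstacle is correctly handling the tangential-basepoint conventions. Specifically, one must confirm that the $[e_0,e_1]$-coordinate of the local Kummer map at $\pm\zeta_8$ is exactly $L_2(\pm\zeta_8)$ with no correction from $\log$ terms. It also has to be certain that the lower-degree terms of $\eps_1$ contribute nothing through $[\tau_\alpha,\tau_\beta]$; this is guaranteed by the vanishing of the $x$-coordinates.
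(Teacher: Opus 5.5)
Your proof is correct and is essentially the paper's argument. The paper evaluates the single point $\zeta_8$ at all four places and combines $Z_1+Z_2=-Z_3-Z_4$ with the inversion relation to conclude that $c_{\chi_i}=0$ would force $\Li_2(\zeta_8)\pm\Li_2(-\zeta_8)=0$. You instead evaluate at $\fp_1$ on the two conjugate points $\pm\zeta_8$; this is the same computation, since $\ev_{\eps_3}(\xi_\zeta)=\ev_{\eps_1}(\xi_\zeta\circ\sigma_i^*)$, with the vanishing $x$-coordinates of torsion points killing the $L_2(\beta)$-term, and the numerical check for $p<200$ is the same finite Sage computation in both.
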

\begin{proof}
    By the formula of the evaluation maps above, $c_{\chi_1}=0$ implies $Z_1-Z_2+Z_3-Z_4=4c_{\chi_1}z_1(\xi)=0$, and $c_{\chi_2}=0$ implies $Z_1-Z_2-Z_3+Z_4=4c_{\chi_2}z_2(\xi)=0$. Combining with $Z_1+Z_2=-Z_3-Z_4$ in \eqref{eq:keyeq}, the additional equation $Z_1+Z_3=0$, respectively $Z_1+Z_4=0$, holds on the Chabauty--Kim locus and hence on $X(\cO_{K,S})$. Plugging in $z=\zeta_8\in X(\cO_{K,S})$ gives $\Li_2(\zeta_8)\pm \Li_2(-\zeta_8)=0$, which completes the proof of the first part. See \url{https://github.com/martinluedtke/PolylogNF} for Sage code verifying the non-vanishing for $p < 200$. 
\end{proof}

\subsection{Refined Chabauty--Kim loci}
\label{sec:refined}

The \emph{refined Selmer scheme} $\Sel_{S,\PL,N}^{\min}(X)$ is the closed subscheme of $\Sel_{S,\PL,N}(X)$ defined by the equations $x_{\fl}y_{\fl}(x_{\fl}+y_{\fl})=0$ for $\fl\in S$ \cite[Definition 4.16]{LL:PolylogNF}, where the functions $x_{\fl}$ and $y_{\fl}$ are defined via
$\xi(\tau_{\fl}) = x_{\fl}(\xi)e_0 + y_{\fl}(\xi)e_1$
for graded cocycles $\xi$, and $\tau_{\fl} \in (L_S^{\MT})_{-1}$ is given by $\langle \tau_{\fl}, \log^{\fu}(x)\rangle = v_{\fl}(x)$ for $x \in \cO_{K,S}^{\times} \otimes \bQ$. In our case, $S=\left\{\fl\right\}$ with $\fl=(1-\zeta_8)$, so the refined Selmer scheme has three components $\Sel_{S,\PL,N}^{\min}=\bigcup_{\Sigma\in\left\{0,1,\infty\right\}}\Sel_{S,\PL,N}^{\Sigma}$, defined by $y_{\fl}=0$, $x_{\fl}=0$, $x_{\fl}+y_{\fl}=0$, respectively. We get the $\Sigma$-refined Chabauty--Kim loci $X(\cO_K\otimes\bZ_p)_{S,\PL,N}^{\Sigma}$, and their union is the \emph{refined Chabauty--Kim locus} $X(\cO_K\otimes\bZ_p)_{S,\PL,N}^{\min}$,
which still contains $X(\cO_{K,S})$. 
Since $v_{\fl}(\alpha) = 2$, we have $\tau_{\fl} = 2 \tau_{\alpha}$, hence $x_{\fl} = 2x_{\alpha}$ and $y_{\fl} = 2y_{\alpha}$. For $\Sigma=1$, the condition $x_\fl=0$ is equivalent to $x_\alpha=0$, so by \eqref{eq:xalphaxbeta}, the additional equation $X_1+X_3=0$ holds on $X(\cO_K\otimes\bZ_p)_{S,\PL,N}^{(1)}$. Similarly, for $\Sigma = 0$ ($y_{\alpha} = 0$), $Y_1+Y_3=0$ holds on $X(\cO_K\otimes\bZ_p)_{S,\PL,N}^{(0)}$; and for $\Sigma=\infty$ ($x_\alpha+y_\alpha=0$), $X_1+X_3+Y_1+Y_3=0$ holds on $X(\cO_K\otimes\bZ_p)_{S,\PL,N}^{(\infty)}$.

\subsection{Solving equations in depth 2}
To solve the equations \eqref{eq:keyeq}, we first notice that
\begin{equation*}
\begin{cases}
    X_1=X_2\\
    Y_1=Y_2
    \end{cases}\Leftrightarrow \quad 
\begin{cases}
    \log(z_1)=\log(z_2)\\
    \log(1-z_1)=\log(1-z_2).
\end{cases}
\end{equation*}

By \cite[Lemma~8.4]{LL:PolylogNF}, the solutions of these equations in $X(\bZ_p) \times X(\bZ_p)$ are those pairs $(z_1,z_2)$ with $z_1=z_2$ or 
\begin{equation}
\label{eq:root-of-unity-parametrisation}
    (z_1,z_2)=\left(\frac{1-\eta}{\zeta-\eta},\zeta\frac{1-\eta}{\zeta-\eta}\right)
\end{equation}
for roots of unity $\zeta,\eta \in \bZ_p$ with $\zeta,\eta\ne 1$ and $\zeta\ne \eta$. In particular, there are only finitely many solution pairs $(z_1,z_2)$ away from the diagonal.
The same discussion also applies to $(z_3,z_4)$. Thus, we have four types of solutions $(z_1,z_2,z_3,z_4)$ of the equations \eqref{eq:keyeq}, depending on whether $(z_1,z_2)$ and $(z_3,z_4)$ are on the diagonal or off-diagonal:
\begin{center}
\begin{tabularx}{\textwidth}{ |c|c|X| } 
 \hline
 type & description & remark \\ \hline
 off-off & $z_1\ne z_2$, $z_3\ne z_4$ & There are only finitely many solutions. \\ \hline
 on-off  & $z_1=z_2$, $z_3\ne z_4$   & \multirow{2}{=}{The system can be reduced to several instances of an equation in one variable. These can be effectively solved by Hensel lifting.} \\ \cline{1-2}
 off-on  & $z_1\ne z_2$, $z_3=z_4$   & \\ \hline
 on-on   & $z_1=z_2$, $z_3=z_4$   & This results in two equations in two variables. \\ \hline
\end{tabularx}
\end{center}

The two equations in two variables $z_1, z_3$ for the on-on case are 
\begin{equation}
\label{eq:onontwovartwoeq}
    \begin{cases}
        L_2(z_1)=-L_2(z_3)\\
        2\log(\alpha)\log(\beta)L_2(z_1)=-L_2(\beta)(\log(z_3)\log(1-z_1)-\log(z_1)\log(1-z_3)).
    \end{cases}
\end{equation}
We need the multivariate version of Hensel's lemma to solve the system of two equations in two variables. An implementation in SageMath by Francesca Bianchi is available at \url{https://github.com/steffenmueller/LinQC}. For the \emph{refined} on-on locus, we have one additional equation for each $\Sigma \in \{0,1,\infty\}$, reducing us back to equations in one variable.

The statistics below show the number of solutions among four types for various situations for $p=17$. Recall that $\# X(\cO_{K,S})=75$, out of which $\# X(\cO_{K,S})^{(1)}=25$ are guaranteed to lie in the $(1)$-refined locus.

\resizebox{\textwidth}{!}{
\begin{tabular}{ |c|c|c|c|c|c|c| } 
 \hline
 type & \# sols & \# exceptional sols & \# refined sols & \# refined except. sols & \# refined sols at $\Sigma=1$ & \# refined except. sols at $\Sigma=1$
  \\  \hline
 off-off & $636$ & $594$ & $636$ & $594$ & $212$ & $198$   \\  
 on-off & $42$ & $42$ & $42$ & $42$ & $14$ & $14$ \\ 
 off-on & $42$ & $42$ & $42$ & $42$ & $14$ & $14$ \\ 
 on-on & $183$ & $150$ & $63$ & $30$ & $21$ & $10$ \\ \hline
  TOTAL & $903$ & $828$ & $783$ & $708$ & $261$ & $236$ \\ \hline
\end{tabular}}

Next we give a theoretical description of the depth $2$ solutions. The system of equations \eqref{eq:keyeq} has the swapping symmetry $z_1\leftrightarrow z_2$, and $z_3\leftrightarrow z_4$. 
Let $s_{34}(z_1,z_2,z_3,z_4)\coloneqq (z_1,z_2,z_4,z_3)$. The simultaneous swaps $z_1\leftrightarrow z_2$ and $z_3\leftrightarrow z_4$ are induced by a Galois automorphism, so $X(\cO_{K,S})$ is stable under their simultaneous action. Thus its closure under the two individual swapping symmetries is $\operatorname{Sw}\bigl(X(\cO_{K,S})\bigr)\coloneqq X(\cO_{K,S})\cup s_{34}\bigl(X(\cO_{K,S})\bigr)$. Define $g_0(t)=1-t,\ g_1(t)=t,\ g_\infty(t)=\frac{1}{1-t}$, and set
\begin{align*}
    \mathcal{G}_p & \coloneqq  \bigcup_{g\in\{g_0,g_1,g_\infty\}} \bigcup_{\zeta,\eta\in\mu_{p-1}(\bZ_p)\setminus\{1\}} \left\{ \bigl(g(\zeta),g(\zeta^{-1}), g(\eta),g(\eta^{-1})\bigr) \right\},\\
    \mathcal{L}_p & \coloneqq \left\{ (\zeta_6^{\pm 1},\zeta_6^{\mp 1},z,z),\ (z,z,\zeta_6^{\pm 1},\zeta_6^{\mp 1}) : \ L_2(z)=0 \right\} \quad (\text{set to be empty if $\zeta_6\notin \bZ_p$}).
\end{align*}

\begin{theorem}
\label{thm:depth2-locus-description}
Assume the $p$-adic period conjecture. For $K=\bQ(\zeta_8)$, $S=\left\{(1-\zeta_8)\right\}$ and totally split primes $p$,
\begin{align}
\label{eq:descriptionofdepth2sols}
\operatorname{Sw}\bigl(X(\cO_{K,S})\bigr) \cup\mathcal{G}_p\cup\mathcal{L}_p\cup X(\bZ[\sqrt{2}]\otimes\bZ_p)_{(\sqrt{2}),\PL,2} &\subseteq X(\cO_K\otimes\bZ_p)_{S,\mathrm{PL},2}, \\
\label{eq:descriptionofdepth2ref}
\operatorname{Sw}\bigl(X(\cO_{K,S})\bigr) \cup \mathcal{G}_p\cup X(\bZ[\sqrt{2}]\otimes\bZ_p)_{(\sqrt{2}),\PL,2}^{\min} &\subseteq X(\cO_K\otimes\bZ_p)_{S,\mathrm{PL},2}^{\min}.  
\end{align}
All of the above containments are equalities unconditionally for all primes $p<200$ which split completely in $K$.
\end{theorem}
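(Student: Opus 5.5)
Under the $p$-adic period conjecture, $c_{\chi_1},c_{\chi_2}\neq 0$, so by \Cref{thm:depth2-equations} the locus $X(\cO_K\otimes\bZ_p)_{S,\PL,2}$ is exactly the solution set of \eqref{eq:keyeq}. The inclusion \eqref{eq:descriptionofdepth2sols} therefore reduces to checking that each of the four families satisfies \eqref{eq:keyeq}. I will take them one at a time.

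\emph{The swapped points.} The system \eqref{eq:keyeq} is invariant under $s_{34}$: the equations $X_3=X_4$ and $Y_3=Y_4$ are symmetric, $Z_3+Z_4$ is symmetric, and the last equation only involves $X_3,Y_3$, which equal $X_4,Y_4$ on the locus. Since $X(\cO_{K,S})$ lies in the locus, so does $s_{34}(X(\cO_{K,S}))$.

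\emph{The family $\mathcal{G}_p$.} For $z=\zeta$ a root of unity we have $\log\zeta=\log\zeta^{-1}=0$ and $\log(1-\zeta^{-1})=\log(-\zeta^{-1})+\log(1-\zeta)=\log(1-\zeta)$, so $X_1=X_2=X_3=X_4=0$ and $Y_1=Y_2$, $Y_3=Y_4$. Moreover $L_2(\zeta)=\Li_2(\zeta)+\tfrac12\log(\zeta)\log(1-\zeta)=\Li_2(\zeta)$, and the inversion formula $\Li_2(\zeta)+\Li_2(\zeta^{-1})=-\tfrac12\log(-\zeta)^2=0$ gives $Z_1+Z_2=0=Z_3+Z_4$. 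Both $Z$-equations then hold, since their right-hand sides vanish because all $X_i=0$. For $g_0$ and $g_\infty$ I will use that $\log$ and $L_1,L_2$ transform under the $S_3$-action by explicit functional equations (up to $\log$-products), which on this family again give all $X_i$ or all products vanishing. I expect this transport step to need the most care: I would write out $L_2(1-z)$ and $L_2(1/(1-z))$ in terms of $L_2(z)$, $\log z$ and $\log(1-z)$, and check directly that the constant terms cancel because $\log(1-\zeta)+\log(1-\zeta^{-1})$ and $\log\zeta$ are controlled.

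\emph{The family $\mathcal{L}_p$.} For $(\zeta_6,\zeta_6^{-1},z,z)$ we have $1-\zeta_6=\zeta_6^{-1}$, so $X_1=X_2=0$ and $Y_1=Y_2=0$. Hence $Z_1+Z_2=0$ by the inversion relation above, and $-X_3Y_1+X_1Y_3=0$. The remaining condition is $-Z_3-Z_4=-2L_2(z)=0$, which is exactly the defining condition of $\mathcal{L}_p$.

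\emph{Functoriality from $\bQ(\sqrt2)$.} A point of $X(\bZ[\sqrt2]\otimes\bZ_p)_{(\sqrt2),\PL,2}$ is a pair $(w_1,w_3)$, and I embed it diagonally as $(w_1,w_1,w_3,w_3)$. This image satisfies \eqref{eq:keyeq}: the Selmer scheme over $\bQ(\sqrt2)$ maps to the one over $K$ via $\sigma_*$, with $\sigma$ the inclusion, and \Cref{thm:Galpermuteseta} compares the period elements. Alternatively, one can check directly that the depth-2 equations over $\bQ(\sqrt2)$ (one equation) coincide with the on-on reduction \eqref{eq:onontwovartwoeq} with $Z_1=-Z_3$ forced, since $d_2=0$ for $\bQ(\sqrt2)$.

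\emph{The refined inclusion.} For \eqref{eq:descriptionofdepth2ref} I will check that each family satisfies one of the extra conditions $X_1+X_3=0$, $Y_1+Y_3=0$ or $X_1+X_3+Y_1+Y_3=0$. On $\mathcal{G}_p$ the choice $g=g_1$ gives $X_i=0$, and $g_0$, $g_\infty$ correspond to the other components. The family $\mathcal{L}_p$ is not included, since no refined condition holds for it in general.

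\emph{The equalities for $p<200$.} These are not a structural argument. They follow by comparing with the computed solution sets, with counts as in the table for $p=17$. This uses \Cref{lem:cchiinonzero} for unconditional equality of the locus and the equations, and a count showing that the exceptional solutions are exactly accounted for by the listed families in each of the four types (off-off by \eqref{eq:root-of-unity-parametrisation}, the others by Hensel lifting).
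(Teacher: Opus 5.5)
Your proposal is correct and follows the paper's proof. You reduce to \eqref{eq:keyeq} via $c_{\chi_1},c_{\chi_2}\neq 0$, use swap-stability for $\operatorname{Sw}$, verify $\mathcal{G}_p$ and $\mathcal{L}_p$ directly, invoke functoriality for the $\bQ(\sqrt{2})$ locus, and leave the equalities for $p<200$ to computation. Two small points. The $g_0,g_\infty$ transport you flag as delicate is immediate from the exact identities $L_2(z)+L_2(z^{-1})=0$ and $L_2(z)+L_2(1-z)=0$, which have no log-product correction terms and are what the paper uses. For the refined inclusion, you should also say that the extra conditions involve only $X_1,X_3,Y_1,Y_3$, so they are stable under the swap, and that functoriality holds for refined loci too.
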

\begin{proof}
    By the discussion in the end of §\ref{subsec:eqdepth2}, the refined equations in depth $2$ are given by
    $$\eqref{eq:keyeq} \text{ and } (X_1+X_3=0 \text{ or } Y_1+Y_3=0 \text{ or } X_1+X_3+Y_1+Y_3=0),$$
    which are stable under the swap $z_3\leftrightarrow z_4$. So $\mathrm{Sw}(X(\cO_{K,S}))\subseteq X(\cO_K\otimes\bZ_p)_{S,\mathrm{PL},2}^{\min}$.

    By the functional equations $L_2(z)+L_2(z^{-1})=0$ and $L_2(z)+L_2(1-z)=0$ \cite[Lemmas~4.5, 4.6]{LL:PolylogNF}, we get $L_2(g(z))+L_2(g(z^{-1}))=0$, and hence $(g(\zeta),g(\zeta^{-1}),g(\eta),g(\eta^{-1}))$ satisfies \eqref{eq:keyeq} for $g=g_0,g_1,g_\infty$. Checking the additional refined equations, we also find that $(g_i(\zeta),g_i(\zeta^{-1}),g_i(\eta),g_i(\eta^{-1}))\in X(\cO_K\otimes\bZ_p)_{S,\PL,2}^{(i)}$ for $i=0,1,\infty$; thus $\mathcal{G}_p\subseteq X(\cO_K\otimes\bZ_p)_{S,\PL,2}^{\min}$. The inclusion $X(\bZ[\sqrt{2}]\otimes\bZ_p)_{(\sqrt{2}),\PL,2}^{(\min)} \subseteq X(\cO_K\otimes\bZ_p)_{S,\mathrm{PL},2}^{(\min)}$ follows from the functoriality of Chabauty--Kim loci in field extensions \cite[Lemma~2.5]{LL:PolylogNF}; it is given by $(z_1,z_2)\mapsto (z_1,z_1,z_2,z_2)$. This shows \eqref{eq:descriptionofdepth2ref}.

    It is easy to check that the tuples in $\mathcal{L}_p$ satisfy \eqref{eq:keyeq} to get the inclusion \eqref{eq:descriptionofdepth2sols}. Finally, verifying that the containments are equalities for $p<200$ is done in the Sage code \url{https://github.com/martinluedtke/PolylogNF}. The equalities hold unconditionally (i.e., independent of the conjecture) since we checked $c_{\chi_i}\ne 0$ for $p<200$ in \Cref{lem:cchiinonzero}.
\end{proof}

\subsection{Depth 3}
\label{subsection:depth3}
We list the basis elements in the degree $-3$ part of $L_S^{\MT}$ and its Galois action in the table below.
\begin{center}
\begin{tabular}{ |c|c|c|c|c|c|c|c|c| } 
 \hline
  $\mathrm{id}$ & $[\tau_\alpha,[\tau_\alpha,\tau_\beta]]$ & $[\tau_\beta,[\tau_\alpha,\tau_\beta]]$ & $[\tau_\alpha,\chi_1]$ &
  $[\tau_\alpha,\chi_2]$ &
  $[\tau_\beta,\chi_1]$ &
  $[\tau_\beta,\chi_2]$ &
  $\sigma_{3,1}$ & $\sigma_{3,2}$
  \\  \hline
  $\sigma_+$ & $1$ & $1$ & $-1$ & $-1$ & 
  $-1$ & $-1$ & $1$ & $1$ 
  \\ \hline
    $\sigma_i$ & $-1$ & $1$ & $1$ & $-1$ & $-1$
   & $1$ & $1$ &  $-1$
  \\ \hline
\end{tabular}
\end{center}

Let $\kappa_1,\kappa_2\in (L^{\MT}_{S})_{-3} \otimes \bQ_p$ with $(\sigma_+,\sigma_i)\kappa_1=(1,1)\kappa_1$ and $(\sigma_+,\sigma_i)\kappa_2=(1,-1)\kappa_2$. For the computation, let all terms in the expansion of $\eps_1$ with $(\sigma_+,\sigma_i)=(1,1)$ (such as $[\tau_\beta,[\tau_\alpha,\tau_\beta]]$ and $\sigma_{3,1}$) be absorbed in $\kappa_1$, and do the same thing for $\kappa_2$. 
Keeping the depth-$2$ Selmer scheme coordinates from \eqref{eq:depth2-parametrisation}, we write the new coordinates appearing in depth $3$ as
\begin{equation}
\label{eq:depth3-parametrisation}
\begin{alignedat}{2}
\xi(\kappa_1) &= k_1(\xi)[e_0,[e_0,e_1]], & \qquad
\xi(\kappa_2) &= k_2(\xi)[e_0,[e_0,e_1]].
\end{alignedat}
\end{equation}

Write the $\fp_1$-adic period point (modulo degree $<-3$):
\begin{align*}
\varepsilon_1= & \log(\alpha)\tau_\alpha+\log(\beta)\tau_\beta+c_{\chi_1}\chi_1+c_{\chi_2}\chi_2+c_{\tau_\alpha\tau_\beta}[\tau_\alpha,\tau_\beta]\\
& +c_{\tau_\alpha\chi_1}[\tau_\alpha,\chi_1]+c_{\tau_\alpha\chi_2}[\tau_\alpha,\chi_2]+c_{\tau_\beta\chi_1}[\tau_\beta,\chi_1]+c_{\tau_\beta\chi_2}[\tau_\beta,\chi_2]+c_{\kappa_1}\kappa_{1}+c_{\kappa_2}\kappa_{2}.
\end{align*}
where $c_{\tau_\alpha\tau_\beta}=L_2(\beta)$ and $c_{\tau_\alpha\chi_1},c_{\tau_\alpha\chi_2},c_{\tau_\beta\chi_1},c_{\tau_\beta\chi_2},c_{\kappa_1},c_{\kappa_2}$ are undetermined coefficients.
Applying the Galois action, we get the expansions of the $\fp_i$-adic period points for $i=2,3,4$ (moduli degree $<-3$):
\begin{align*}
\varepsilon_2=&\log(\alpha)\tau_\alpha+\log(\beta)\tau_\beta-c_{\chi_1}\chi_1-c_{\chi_2}\chi_2+c_{\tau_\alpha\tau_\beta}[\tau_\alpha,\tau_\beta]\\
&-c_{\tau_\alpha\chi_1}[\tau_\alpha,\chi_1]-c_{\tau_\alpha\chi_2}[\tau_\alpha,\chi_2]-c_{\tau_\beta\chi_1}[\tau_\beta,\chi_1]-c_{\tau_\beta\chi_2}[\tau_\beta,\chi_2]+c_{\kappa_1}\kappa_{1}+c_{\kappa_2}\kappa_{2},\\
\varepsilon_3=&\log(\alpha)\tau_\alpha-\log(\beta)\tau_\beta+c_{\chi_1}\chi_1-c_{\chi_2}\chi_2-c_{\tau_\alpha\tau_\beta}[\tau_\alpha,\tau_\beta]\\
&+c_{\tau_\alpha\chi_1}[\tau_\alpha,\chi_1]-c_{\tau_\alpha\chi_2}[\tau_\alpha,\chi_2]-c_{\tau_\beta\chi_1}[\tau_\beta,\chi_1]+c_{\tau_\beta\chi_2}[\tau_\beta,\chi_2]+c_{\kappa_1}\kappa_{1}-c_{\kappa_2}\kappa_{2},\\
\varepsilon_4=&\log(\alpha)\tau_\alpha-\log(\beta)\tau_\beta-c_{\chi_1}\chi_1+c_{\chi_2}\chi_2-c_{\tau_\alpha\tau_\beta}[\tau_\alpha,\tau_\beta]\\
&-c_{\tau_\alpha\chi_1}[\tau_\alpha,\chi_1]+c_{\tau_\alpha\chi_2}[\tau_\alpha,\chi_2]+c_{\tau_\beta\chi_1}[\tau_\beta,\chi_1]-c_{\tau_\beta\chi_2}[\tau_\beta,\chi_2]+c_{\kappa_1}\kappa_{1}-c_{\kappa_2}\kappa_{2}.
\end{align*}
As a consequence, the evaluation maps $\ev_{\eps_i}$ are given by
\begin{align*}
\mathrm{ev}_{\varepsilon_1}(\xi)&=(\log(\alpha)x_\alpha(\xi)+\log(\beta)x_\beta(\xi))e_0+(\log(\alpha)y_\alpha(\xi)+\log(\beta)y_\beta(\xi))e_1\\ &+
\left(c_{\chi_1}z_1(\xi)+c_{\chi_2}z_2(\xi)+c_{\tau_\alpha\tau_\beta}(x_\alpha(\xi)y_\beta(\xi)-x_\beta(\xi)y_\alpha(\xi))\right)[e_0,e_1]\\ &+
\Bigl((c_{\kappa_1}k_1(\xi)+c_{\kappa_2}k_2(\xi)+c_{\tau_\alpha\chi_1}x_\alpha(\xi)z_1(\xi)+c_{\tau_\alpha\chi_2}x_\alpha(\xi)z_2(\xi) \\
&\quad + c_{\tau_\beta\chi_1}x_\beta(\xi)z_1(\xi)+c_{\tau_\beta\chi_2}x_\beta(\xi)z_2(\xi)\Bigr)[e_0,[e_0,e_1]],
\\[2mm]
\mathrm{ev}_{\varepsilon_2}(\xi)&=
(\log(\alpha)x_\alpha(\xi)+\log(\beta)x_\beta(\xi))e_0+(\log(\alpha)y_\alpha(\xi)+\log(\beta)y_\beta(\xi))e_1\\ &+
\left(-c_{\chi_1}z_1(\xi)-c_{\chi_2}z_2(\xi)+c_{\tau_\alpha\tau_\beta}(x_\alpha(\xi)y_\beta(\xi)-x_\beta(\xi)y_\alpha(\xi))\right)[e_0,e_1]\\ &+
\Bigl(c_{\kappa_1}k_1(\xi)+c_{\kappa_2}k_2(\xi)-c_{\tau_\alpha\chi_1}x_\alpha(\xi)z_1(\xi)-c_{\tau_\alpha\chi_2}x_\alpha(\xi)z_2(\xi)\\
&\quad -c_{\tau_\beta\chi_1}x_\beta(\xi)z_1(\xi)-c_{\tau_\beta\chi_2}x_\beta(\xi)z_2(\xi)\Bigr)[e_0,[e_0,e_1]],\\[2mm]
\mathrm{ev}_{\varepsilon_3}(\xi)&=(\log(\alpha)x_\alpha(\xi)-\log(\beta)x_\beta(\xi))e_0+(\log(\alpha)y_\alpha(\xi)-\log(\beta)y_\beta(\xi))e_1\\ &+
\left(c_{\chi_1}z_1(\xi)-c_{\chi_2}z_2(\xi)-c_{\tau_\alpha\tau_\beta}(x_\alpha(\xi)y_\beta(\xi)-x_\beta(\xi)y_\alpha(\xi))\right)[e_0,e_1]\\ &+
\Bigl(c_{\kappa_1}k_1(\xi)-c_{\kappa_2}k_2(\xi)+c_{\tau_\alpha\chi_1}x_\alpha(\xi)z_1(\xi)-c_{\tau_\alpha\chi_2}x_\alpha(\xi)z_2(\xi)\\
&\quad -c_{\tau_\beta\chi_1}x_\beta(\xi)z_1(\xi)+c_{\tau_\beta\chi_2}x_\beta(\xi)z_2(\xi)\Bigr)[e_0,[e_0,e_1]],\\[2mm]
\mathrm{ev}_{\varepsilon_4}(\xi)&=(\log(\alpha)x_\alpha(\xi)-\log(\beta)x_\beta(\xi))e_0+(\log(\alpha)y_\alpha(\xi)-\log(\beta)y_\beta(\xi))e_1\\
&+
\left(-c_{\chi_1}z_1(\xi)+c_{\chi_2}z_2(\xi)-c_{\tau_\alpha\tau_\beta}(x_\alpha(\xi)y_\beta(\xi)-x_\beta(\xi)y_\alpha(\xi))\right)[e_0,e_1]\\ &+
\Bigl(c_{\kappa_1}k_1(\xi)-c_{\kappa_2}k_2(\xi)-c_{\tau_\alpha\chi_1}x_\alpha(\xi)z_1(\xi)+c_{\tau_\alpha\chi_2}x_\alpha(\xi)z_2(\xi)\\
&\quad +c_{\tau_\beta\chi_1}x_\beta(\xi)z_1(\xi)-c_{\tau_\beta\chi_2}x_\beta(\xi)z_2(\xi)\Bigr)[e_0,[e_0,e_1]].
\end{align*}
Eliminating the $x_i(\xi), y_i(\xi), z_i(\xi), k_i(\xi)$, we get all the equations in depth $3$.

\begin{theorem}
\label{thm:depth3-equations}
Let $K=\bQ(\zeta_8)$ and $S=\left\{(1-\zeta_8)\right\}$. If $c_{\chi_i},c_{\kappa_i}\ne 0$ for $i=1,2$, then the depth-3 polylogarithmic Chabauty--Kim locus $X(\cO_K\otimes \bZ_p)_{S,\PL,3}$ is the solution set of the following equations in addition to \eqref{eq:keyeq} (with the coordinates $(X_i,Y_i,Z_i,W_i)=(\log(z_i),L_1(z_i),L_2(z_i),L_3(z_i))$ for $i=1,2,3,4$): 
\begin{equation}
\label{eq:depth3}
\begin{cases}
W_1-W_2+W_3-W_4=c_{3,1}(X_1+X_3)(Z_1+Z_3)+c_{3,2}(X_1-X_3)(Z_1+Z_4)\\
W_1-W_2-W_3+W_4=c_{3,3}(X_1-X_3)(Z_1+Z_3)+c_{3,4}(X_1+X_3)(Z_1+Z_4)
\end{cases}
\end{equation}
where
$$c_{3,1}=\frac{c_{\tau_\alpha\chi_1}}{c_{\tau_\alpha}c_{\chi_1}},\quad 
c_{3,2}=\frac{c_{\tau_\beta\chi_2}}{c_{\tau_\beta}c_{\chi_2}},\quad 
c_{3,3}=\frac{c_{\tau_\beta\chi_1}}{c_{\tau_\beta}c_{\chi_1}},\quad 
c_{3,4}=\frac{c_{\tau_\alpha\chi_2}}{c_{\tau_\alpha}c_{\chi_2}}.$$
\end{theorem}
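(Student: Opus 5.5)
The plan is to describe the scheme-theoretic image of the depth-$3$ localisation map $\loc_p=\ev_{\eps_1}\times\cdots\times\ev_{\eps_4}$, using the explicit formulas for the $\ev_{\eps_i}$ displayed just before the statement, and to eliminate the Selmer coordinates $x_\alpha,x_\beta,y_\alpha,y_\beta,z_1,z_2,k_1,k_2$. The Selmer scheme has dimension $8$ in these coordinates, while the target has dimension $16$, so we expect eight independent equations. Six of them are the depth-$2$ system \eqref{eq:keyeq}, whose image description is \Cref{thm:depth2-equations}. These hold because the components of $\loc_p$ in degrees $-1,-2$ do not involve $k_1,k_2$ or the degree-$(-3)$ coefficients. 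The remaining two are the claimed equations \eqref{eq:depth3} for the $W_i$.

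\textbf{Step 1: isolate the $W$ coordinates via Galois characters.} Take the four combinations $W_1+W_2+W_3+W_4$, $W_1+W_2-W_3-W_4$, $W_1-W_2+W_3-W_4$ and $W_1-W_2-W_3+W_4$, which correspond to the characters of $\Gal(K/\bQ)$. From the formulas:
\begin{itemize}
\item the first two equal $4c_{\kappa_1}k_1$ and $4c_{\kappa_2}k_2$;
\item the third equals $4(c_{\tau_\alpha\chi_1}x_\alpha z_1+c_{\tau_\beta\chi_2}x_\beta z_2)$;
\item the fourth equals $4(c_{\tau_\alpha\chi_2}x_\alpha z_2+c_{\tau_\beta\chi_1}x_\beta z_1)$.
\end{itemize}
Since $c_{\kappa_1},c_{\kappa_2}\neq0$ and $k_1,k_2$ are free coordinates, the first two combinations impose no condition. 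They can be solved for $k_1,k_2$ uniquely, which will also give surjectivity onto the new directions.

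\textbf{Step 2: express $x_\alpha,x_\beta,z_1,z_2$ in local coordinates.} By \eqref{eq:xalphaxbeta}, $x_\alpha=(X_1+X_3)/(2c_{\tau_\alpha})$ and $x_\beta=(X_1-X_3)/(2c_{\tau_\beta})$. From the degree-$(-2)$ components of the evaluation maps we get $Z_1-Z_2+Z_3-Z_4=4c_{\chi_1}z_1$ and $Z_1-Z_2-Z_3+Z_4=4c_{\chi_2}z_2$. Using the depth-$2$ relation $Z_1+Z_2=-Z_3-Z_4$, these simplify to $2(Z_1+Z_3)=4c_{\chi_1}z_1$ and $2(Z_1+Z_4)=4c_{\chi_2}z_2$. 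Hence $z_1=(Z_1+Z_3)/(2c_{\chi_1})$ and $z_2=(Z_1+Z_4)/(2c_{\chi_2})$, using $c_{\chi_i}\neq0$.

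\textbf{Step 3: substitute.} Plugging these expressions into the last two combinations from Step 1 gives exactly \eqref{eq:depth3}, with the stated constants $c_{3,j}$; the factor $4\cdot\tfrac12\cdot\tfrac12=1$ confirms there is no stray scalar.

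\textbf{Step 4: reverse inclusion.} We must show that every point satisfying \eqref{eq:keyeq} and \eqref{eq:depth3} lies in the image of $\loc_p$; this is the only genuinely delicate point. Given such a point $(X_i,Y_i,Z_i,W_i)$ over an arbitrary $\bQ_p$-algebra:
\begin{itemize}
\item By \Cref{thm:depth2-equations}, using $c_{\chi_i}\neq0$, there is a depth-$2$ Selmer point $(x,y,z)$ mapping to $(X_i,Y_i,Z_i)$. This point is in fact unique, since the formulas above invert the map.
\item Define $k_1,k_2$ by the first two combinations of Step 1, using $c_{\kappa_i}\neq0$.
\item The remaining two combinations then agree by \eqref{eq:depth3}. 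Because the combination matrix is invertible, all four $W_i$ are recovered.
\end{itemize}
Thus the image is the closed subscheme cut out by these equations, and its $\bZ_p$-point preimage under $j_p$ (via \Cref{Ln-as-pullback}) is the stated locus.
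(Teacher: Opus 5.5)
Your proposal is correct and takes essentially the same approach as the paper. The paper obtains \eqref{eq:depth3} by eliminating $x_\alpha,x_\beta,y_\alpha,y_\beta,z_1,z_2,k_1,k_2$ from the displayed formulas for $\ev_{\eps_1},\ldots,\ev_{\eps_4}$; your character-combination computation (whose signs and constants check out) is that elimination carried out explicitly, and your Step~4 spells out the reverse inclusion that the paper leaves implicit.
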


\begin{lemma}
    For $i=1,2$, if $\Li_3(\zeta_8)+(-1)^{i+1} \Li_3(-\zeta_8)\ne 0$, then $c_{\kappa_i}\ne 0$. This is the case for all $p<200$ which split completely in $\bQ(\zeta_8)$. In particular, the nonvanishing assumption in \Cref{thm:depth3-equations} is satisfied for those~$p$.
\end{lemma}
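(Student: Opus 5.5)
Mirroring the proof of \Cref{lem:cchiinonzero}, I argue by contraposition: assuming $c_{\kappa_i}=0$, I derive a linear relation among the $W_j$ that must hold on $X(\cO_{K,S})$, and then evaluate it at $z=\zeta_8$. Take the depth-3 components of the four evaluation maps $\ev_{\eps_j}$ above and form the sums that isolate the $\kappa$-terms. Adding all four $[e_0,[e_0,e_1]]$-coefficients kills every term $c_{\tau\chi}x z$, since each such term occurs with signs summing to zero. The same happens for the combination $W_1+W_2-W_3-W_4$. This yields the identities
\[ W_1+W_2+W_3+W_4 = 4c_{\kappa_1}k_1(\xi), \qquad W_1+W_2-W_3-W_4 = 4c_{\kappa_2}k_2(\xi), \]
valid on the image of the localisation map. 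Hence $c_{\kappa_1}=0$ forces $W_1+W_2+W_3+W_4=0$, and $c_{\kappa_2}=0$ forces $W_1+W_2-W_3-W_4=0$, on the depth-3 polylogarithmic Chabauty--Kim locus, and therefore on $X(\cO_{K,S})$.

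Next, plug in the $S$-integral point $z=\zeta_8$. Its images at the four primes are $(z_1,z_2,z_3,z_4)=(\zeta_8,\zeta_8^{-1},-\zeta_8,-\zeta_8^{-1})$ inside $\bQ_p$, using the labelling $\fp_2=\sigma_+^*\fp_1$, etc., and the fixed embedding at $\fp_1$. Now use the functional equation for inversion, $L_n(z^{-1})=(-1)^{n+1}L_n(z)$, which is the analogue of $L_2(z)+L_2(z^{-1})=0$ (cf.\ \cite[Lemmas~4.5]{LL:PolylogNF}). For $n=3$ it gives $W_2=W_1$ and $W_4=W_3$. The relations then become $2(L_3(\zeta_8)\pm L_3(-\zeta_8))=0$.

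It remains to convert $L_3$ into $\Li_3$. Since $\log(\zeta)=0$ for a root of unity $\zeta$ in $\bQ_p$, \eqref{eq:Ln} gives $L_3(\zeta)=\Li_3(\zeta)$. This contradicts the hypothesis $\Li_3(\zeta_8)+(-1)^{i+1}\Li_3(-\zeta_8)\neq 0$. The numerical check for $p<200$ is a finite computation of $p$-adic trilogarithms at $\pm\zeta_8$, done in the same Sage code as for depth~2.

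The main obstacle is securing the inversion functional equation for $L_3$ with the correct sign $(+1)$. If only the depth-2 version is available from \cite{LL:PolylogNF}, I would derive it from the polylogarithmic quotient: $z\mapsto z^{-1}$ acts on $\Pi_{\PL}$ by sending $e_0\mapsto -e_0$ (and acts suitably on $e_1$), so $\ad(e_0)^{n-1}e_1$ picks up the sign $(-1)^{n-1}$. As a fallback that avoids this issue, I could use the $\sigma_+$-equivariance of the global Kummer map directly: $\xi_{\zeta_8}$ has $k_1,k_2$ values, and $W_1=W_2$ follows from the even/odd behaviour of $\ev_{\eps_1}$ versus $\ev_{\eps_2}$ on $\sigma_+$-twisted cocycles.
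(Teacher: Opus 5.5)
Your proposal is correct and follows the paper's proof: adding the four depth-3 evaluation maps gives $W_1+W_2+W_3+W_4=4c_{\kappa_1}k_1(\xi)$, and combining them with signs $(+,+,-,-)$ gives $W_1+W_2-W_3-W_4=4c_{\kappa_2}k_2(\xi)$; you then evaluate at the $S$-integral point $\zeta_8$, exactly as the paper does. The sign issue you flag as the main obstacle is already settled by \Cref{lem:Coleman-Sinnott} with $n=3$: at roots of unity $\log$ vanishes, so $L_3=\Li_3$ and $\Li_3(z^{-1})=\Li_3(z)$, and neither the fundamental-group derivation nor your rather vague fallback is needed.
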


\begin{proof}
By the evaluation maps above, $c_{\kappa_1}=0$ implies $W_1+W_2+W_3+W_4=4c_{\kappa_1}k_1(\xi) = 0$, and $c_{\kappa_2}=0$ implies $W_1+W_2-W_3-W_4=4c_{\kappa_2}k_2(\xi)=0$. Plugging in $z=\zeta_8\in X(\cO_{K,S})$ yields $\Li_3(\zeta_8)\pm \Li_3(-\zeta_8)=0$. 
\end{proof}

Using known points in $X(\cO_{K,S})$, the coefficients are computed as follows (writing $\zeta\coloneqq \zeta_8$):
\begin{align*}
    c_{3,1}&=\frac{L_3(\zeta^3+\zeta^2)-L_3(-\zeta-\zeta^2)+L_3(-\zeta^3+\zeta^2)-L_3(\zeta-\zeta^2)}{(\log(\zeta^3+\zeta^2)+\log(-\zeta^3+\zeta^2))(L_2(\zeta^3+\zeta^2)+L_2(-\zeta^3+\zeta^2))},\\[2mm]
    c_{3,2}&=\frac{L_3(\frac{1+\zeta}{2})-L_3(\frac{1-\zeta^3}{2})+L_3(\frac{1-\zeta}{2})-L_3(\frac{1+\zeta^3}{2})}{(\log(\frac{1+\zeta}{2})-\log(\frac{1-\zeta}{2}))(L_2(\frac{1+\zeta}{2})+L_2(\frac{1+\zeta^3}{2}))},\\[2mm]
    c_{3,3}&=\frac{L_3(\zeta^3+\zeta^2)-L_3(-\zeta-\zeta^2)-L_3(-\zeta^3+\zeta^2)+L_3(\zeta-\zeta^2)}{(\log(\zeta^3+\zeta^2)-\log(-\zeta^3+\zeta^2))(L_2(\zeta^3+\zeta^2)+L_2(-\zeta^3+\zeta^2))},\\[2mm]
    c_{3,4}&=\frac{L_3(\frac{1+\zeta}{2})-L_3(\frac{1-\zeta^3}{2})-L_3(\frac{1-\zeta}{2})+L_3(\frac{1+\zeta^3}{2})}{(\log(\frac{1+\zeta}{2})+\log(\frac{1-\zeta}{2}))(L_2(\frac{1+\zeta}{2})+L_2(\frac{1+\zeta^3}{2}))}.
\end{align*}

For the following discussion, we first introduce the Coleman--Sinnott functional equation.
\begin{lemma}[Coleman--Sinnott]
\label{lem:Coleman-Sinnott}
For $n\geq 1$ and $z\in X(\bZ_p)$,
\begin{equation}
\label{eq:Coleman-Sinnott}
    \mathrm{Li}_n(z)+(-1)^n\mathrm{Li}_n(z^{-1})=-\frac{1}{n!}\log(z)^n.
\end{equation}
\end{lemma}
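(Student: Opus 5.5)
The plan is induction on $n$ via differentiation, working with the Coleman functions $\Li_n$ on $\bP^1\smallsetminus\{0,1,\infty\}$ with the branch of the $p$-adic logarithm used throughout (so $\log(-1)=0$). Set
\[ F_n(z)\coloneqq \Li_n(z)+(-1)^n\Li_n(z^{-1})+\tfrac{1}{n!}\log(z)^n . \]
The goal is to show $F_n\equiv 0$ on $X(\bZ_p)$, using three facts:
\begin{itemize}
\item the defining differential relations $\rd\Li_n(z)=\Li_{n-1}(z)\,\frac{\rd z}{z}$ for $n\geq 2$, with $\Li_1(z)=-\log(1-z)$;
\item the relation $\rd\log(z^{-1}) = -\frac{\rd z}{z}$;
\item Coleman's uniqueness principle: a Coleman function on the connected wide open space underlying $X$ with vanishing differential is a global constant, the same constant on every residue disc.
\end{itemize}

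\textbf{Base case $n=1$.} We compute
\[ F_1(z)=-\log(1-z)+\log(1-z^{-1})+\log z=\log\Bigl(\frac{(z-1)\,z}{z(1-z)}\Bigr)=\log(-1)=0 . \]
This uses only the multiplicativity of $\log$ on $\bZ_p^\times$, since $z$, $1-z$ and $1-z^{-1}$ are units for $z\in X(\bZ_p)$.

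\textbf{Inductive step.} Assuming $F_{n-1}=0$, differentiate term by term. The pullback of $\rd\Li_n$ under $z\mapsto z^{-1}$ is $-\Li_{n-1}(z^{-1})\frac{\rd z}{z}$, so
\[ \rd F_n=\Bigl(\Li_{n-1}(z)+(-1)^{n-1}\Li_{n-1}(z^{-1})+\tfrac{1}{(n-1)!}\log(z)^{n-1}\Bigr)\frac{\rd z}{z}=F_{n-1}\,\frac{\rd z}{z}=0 . \]
By Coleman's uniqueness principle, $F_n$ is therefore a constant $C_n\in\bQ_p$. Strictly, the principle is applied to $F_n$ as a Coleman function; this is legitimate because the class of Coleman functions is stable under pullback by the automorphism $z\mapsto z^{-1}$ of $X$.

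\textbf{Determining the constant.} This is the main obstacle, since the tangential base point $\vec{1}_0$ is not in $X(\bZ_p)$. I will evaluate at $z=-1$, which lies in $X(\bZ_p)$ for odd $p$ (the case needed here, as $p\equiv 1\pmod 8$). There $\log(-1)=0$, so
\[ C_n=(1+(-1)^n)\Li_n(-1). \]
For odd $n$ this vanishes immediately. For even $n$ it remains to show $\Li_n(-1)=0$, which I would do as follows.
\begin{enumerate}
\item Use the distribution relation $\Li_n(z)+\Li_n(-z)=2^{1-n}\Li_n(z^2)$, proved by the same differentiate-and-compare-constants argument near $0$, where all functions vanish to first order.
\item Take limits $z\to 1$ inside the residue disc of $1$, i.e.\ use the regularised value $\Li_n(1)$. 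This gives $\Li_n(-1)=(2^{1-n}-1)\Li_n(1)$.
\item Use Coleman's result that $\Li_n(1)$ equals the $p$-adic zeta value $\zeta_p(n)$, which vanishes for even $n$.
\end{enumerate}

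\textbf{Fallback for the constant.} If the passage to $z=1$ proves delicate, I would instead fix the constant at the tangential base point. Compare the expansions as $z\to 0$ along $\vec{1}_0$: the left side behaves like $(-1)^n\Li_n(z^{-1})$ near $\infty$, whose regularised expansion is computed by the same inversion symmetry at the level of the de Rham path $\vec{1}_\infty\to\vec{1}_0$ with trivial Frobenius twist. This shows that the constant term is $0$.
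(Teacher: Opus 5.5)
The paper gives no proof of this lemma. It quotes the result from Coleman's work on $p$-adic polylogarithms and then uses it only at roots of unity. Your argument is a correct, self-contained reconstruction along the standard lines, so it differs from the paper only in that it supplies a proof where the paper cites one.

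\textbf{The core steps are sound.} The recursion $\rd F_n=F_{n-1}\,\frac{\rd z}{z}$, the appeal to Coleman's uniqueness principle and the evaluation at $z=-1$ all work. Two small remarks: the base case should be read on all of $X$, which is fine since a branch of $\log$ is a homomorphism on $\mathbb{C}_p^\times$; and restricting to odd $p$ costs nothing, because $X(\bZ_2)=\emptyset$.

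\textbf{Where the content lies.} Your route makes visible something the bare citation hides. For even $n$ the constant is $C_n=2(2^{1-n}-1)\Li_n(1)$, so in even weight the lemma is \emph{equivalent}, given the distribution relation, to $\zeta_p(n)=0$. All of the arithmetic content therefore sits in your step~(3). It must come from a result proved independently of the inversion formula. One such source is Coleman's identification $\Li_n(1)=\frac{p^n}{p^n-1}L_p(n,\omega^{1-n})$, combined with the vanishing of Kubota--Leopoldt $L$-functions at the odd characters $\omega^{1-n}$. If your reference instead derives $\zeta_p(2k)=0$ from inversion plus distribution, the argument is circular.

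\textbf{An alternative without $L$-functions.} You can fix the constant using Frobenius. Set $\ell_n^{(p)}(z)\coloneqq\Li_n(z)-p^{-n}\Li_n(z^p)$; then $F_n(z)-p^{-n}F_n(z^p)=\ell_n^{(p)}(z)+(-1)^n\ell_n^{(p)}(z^{-1})$. By Coleman, $\ell_n^{(p)}$ is rigid analytic on $\bP^1$ minus the residue disc of $1$, a closed disc containing $0$ and $\infty$. It vanishes at $0$, and also at $\infty$ because $\ell_{n+1}^{(p)}$ is analytic there. Running your differentiation argument on that disc gives $\ell_n^{(p)}(z^{-1})=(-1)^{n+1}\ell_n^{(p)}(z)$, hence $(1-p^{-n})C_n=0$.

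\textbf{Drop the ``fallback''.} The regularised value of $\Li_n$ at the tangential base point at $\infty$ is exactly where $\zeta_p(2k)$ enters, just as $\zeta(2k)$ does in the complex inversion formula. Invoking ``the same inversion symmetry'' there presupposes the statement you are proving.
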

Noting that $L_n(z)=\mathrm{Li}_n(z)$ when $z$ is a root of unity, the Coleman--Sinnott functional equation \eqref{eq:Coleman-Sinnott} shows that for all quadruples of the form $(\zeta,\zeta^{-1}, \eta,\eta^{-1})$ where $\zeta, \eta \neq 1$ are roots of unity in~$\bZ_p$, we have
    \begin{gather*}
        \text{all } X_i = 0, \quad Y_1 = Y_2,\quad Y_3 = Y_4, \quad
        Z_1 + Z_2= 0,\quad Z_3 + Z_4 = 0, \quad W_1=W_2,\quad W_3=W_4,
    \end{gather*}
    which implies that all equations of \eqref{eq:keyeq} and \eqref{eq:depth3} hold. This is not a coincidence. In fact, these points persist in infinite depth:

\begin{theorem}
\label{thm:rootofunityalwayssol}
    Assume the $p$-adic period conjecture. For $K=\bQ(\zeta_8)$, $S=\left\{(1-\zeta_8)\right\}$, the refined locus $X(\cO_K \otimes \bZ_p)_{S,\PL,\infty}^{(1)}$ for $\Sigma=1$ contains all quadruples of the form $(\zeta,\zeta^{-1}, \eta,\eta^{-1})$ where $\zeta, \eta \neq 1$ are roots of unity in~$\bZ_p$.
\end{theorem}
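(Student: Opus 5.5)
The plan is to construct, for each such quadruple $(\zeta,\zeta^{-1},\eta,\eta^{-1})$, an explicit graded homomorphism $\xi\in\Hom_{\gr}(L_S^{\MT},\Lie(\Pi_{\PL}^{\omega}))_{\bQ_p}$. It will satisfy $x_\alpha(\xi)=x_\beta(\xi)=0$, so it lies in the $\Sigma=1$ component of the refined Selmer scheme. We then check that $\ev_{\eps_i}(\xi)$ equals the (logarithm of the) local Kummer image of $z_i$ for $i=1,\dots,4$ in every depth simultaneously. Compatibility of these $\xi$ under the depth truncations then gives membership in $X(\cO_K\otimes\bZ_p)^{(1)}_{S,\PL,\infty}$.

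\textbf{Target values.} Since $\log(\zeta)=\log(\eta)=0$, we have $L_n=\Li_n$ at these points. By \Cref{Ln-as-pullback}, the local image of $z_i$ therefore has $e_0$-coefficient $0$ and $\ad(e_0)^{n-1}e_1$-coefficient $\Li_n(z_i)$. By the Coleman--Sinnott equation (\Cref{lem:Coleman-Sinnott}) with vanishing logarithm, $\Li_n(\zeta^{-1})=(-1)^{n+1}\Li_n(\zeta)$, and similarly for $\eta$. So in degree $-n$ the four targets are
\[
a_n,\quad (-1)^{n+1}a_n,\quad b_n,\quad (-1)^{n+1}b_n,
\]
with $a_n=\Li_n(\zeta)$ and $b_n=\Li_n(\eta)$.

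\textbf{Reduction to the abelianisation.} If $x_\alpha=x_\beta=0$, then $\xi(\tau_\alpha),\xi(\tau_\beta)\in\bQ_p e_1$. Every generator $\sigma_{n,j}$ with $n\ge2$ maps into $\bQ_p\,\ad(e_0)^{n-1}e_1$. All of these lie in the abelian subalgebra spanned by $e_1,\ad(e_0)^{k}e_1$, which does not contain $e_0$. Hence $\xi$ kills all brackets and factors through $(L_S^{\MT})^{\ab}$. Consequently $\ev_{\eps}(\xi)$ depends only on the image of $\eps$ in the abelianisation, i.e.\ only on the coefficients $c_{\tau_\alpha},c_{\tau_\beta},c_{\sigma_{n,1}},c_{\sigma_{n,2}}$ of the Goncharov expansion (\Cref{lie-algebra-element-expansion}). (This is also visible from \Cref{cocycle-evaluation-map}: every $k_n$-term with an $x$-factor vanishes.) We use the Galois-adapted generators of \Cref{lem:galoislem}. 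Then $\eps_2=\sigma_+^*\eps_1$, $\eps_3=\sigma_i^*\eps_1$ and $\eps_4=(\sigma_+\sigma_i)^*\eps_1$ by \Cref{thm:Galpermuteseta}. The abelianised coefficient of $\sigma_{n,j}$ in $\eps_k$ is therefore $c_{\sigma_{n,j}}$ times the sign $(-1)^{n+1}$ for $\sigma_+$ and $(-1)^{j+1}$ for $\sigma_i$.

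\textbf{Solving degree by degree.} Writing $A_n=c_{\sigma_{n,1}}z_{n,1}$ and $B_n=c_{\sigma_{n,2}}z_{n,2}$, the degree $-n$ components ($n\ge2$) of $\ev_{\eps_1},\dots,\ev_{\eps_4}$ are
\[
A_n+B_n,\quad (-1)^{n+1}(A_n+B_n),\quad A_n-B_n,\quad (-1)^{n+1}(A_n-B_n).
\]
These match the targets exactly when $A_n=(a_n+b_n)/2$ and $B_n=(a_n-b_n)/2$. This is solvable because $c_{\sigma_{n,j}}\neq0$ by \Cref{period-conjecture-implies-nonvanishing}. In degree $-1$ the $e_0$-components vanish automatically. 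The $e_1$-components are $\log(\alpha)y_\alpha\pm\log(\beta)y_\beta$ (with the sign $+$ at $\fp_1,\fp_2$ and $-$ at $\fp_3,\fp_4$), and the targets are $-\log(1-\zeta)$ twice and $-\log(1-\eta)$ twice, consistent with the $n=1$ case of Coleman--Sinnott. This is solvable since $\log\alpha,\log\beta\neq0$ (\Cref{lem:logalpha} together with \Cref{period-conjecture-implies-nonvanishing}).

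The step needing most care is the sign bookkeeping: one must verify that the Galois action on the abelianised coefficients of $\eps_{\fp}$ is exactly through the sign characters of \Cref{lem:galoislem}, which needs the equivariant lifts chosen there. One must also check that the expansion in the Goncharov form projects to the abelianisation as claimed, so that no $\tau$-prefixed terms contribute once $x_\alpha=x_\beta=0$.
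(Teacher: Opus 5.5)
Your proposal is correct and follows the paper's proof: the paper also imposes $x_\alpha=x_\beta=0$ so that $\xi$ kills commutators, and it writes the four evaluation maps using the Galois signs from \Cref{lem:galoislem}. It then uses Coleman--Sinnott to reduce the targets and solves the same linear system, rescaling $\sigma_{n,i}$ so that $c_{\sigma_{n,i}}=1$, which is equivalent to your substitution $A_n=c_{\sigma_{n,1}}z_{n,1}$, $B_n=c_{\sigma_{n,2}}z_{n,2}$. One minor point on your closing caveat: since $\xi$ factors through $(L_S^{\MT})^{\ab}$, you only need the images of the $\sigma_{n,j}$ to be Galois eigenvectors modulo commutators, so the equivariant lifts are not actually required.
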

\begin{proof}
To show $(\zeta,\zeta^{-1}, \eta,\eta^{-1})\in X(\cO_K \otimes \bZ_p)_{S,\PL,\infty}^{(1)}$, it suffices to show that there exists $\xi\in \Sel_{S,\PL,\infty}(X)(\bQ_p)$ such that $\loc_p(\xi)=j_p((\zeta,\zeta^{-1}, \eta,\eta^{-1}))$ and $x_\alpha(\xi)=x_{\beta}(\xi)=0$.
Using the $p$-adic period conjecture, choose $\sigma_{n,i}\in (L_S^{\MT})_{-n} \otimes \bQ_p$ such that $c_{\sigma_{n,i}}=1$ for all $n\geq 2$ and $i=1,2$. (By \Cref{period-conjecture-implies-nonvanishing}, $c_{\sigma_{n,i}}\ne 0$, and we can rescale it to $1$.) The $\fp_k$-adic period element ($k=1,\dots,4$) can be written as
$$\varepsilon_k = \log(\alpha)\sigma_k^*\tau_\alpha + \log(\beta)\sigma_k^*\tau_\beta + \sum_{n=2}^\infty (\sigma_k^* \sigma_{n,1} + \sigma_k^* \sigma_{n,2})\bmod{\text{commutators}}$$
where $\sigma_k\in \Gal(K/\bQ)$ such that $\fp_k=\sigma_{k}^*\fp_1$.

Note that $\xi$ maps commutators to $0$ if $x_\alpha(\xi)=x_{\beta}(\xi)=0$. Under this condition, $\ev_{\varepsilon_k}(\xi)$ is given by
\begin{equation*}
\ev_{\varepsilon_k}(\xi)=\scriptsize{
\begin{cases}
(\log(\alpha)y_\alpha(\xi)+\log(\beta)y_\beta(\xi))e_1+\sum_{n=2}^\infty (z_{n,1}(\xi)+z_{n,2}(\xi))\ad(e_0)^{n-1}e_1, & k=1; \\
(\log(\alpha)y_\alpha(\xi)+\log(\beta)y_\beta(\xi))e_1+\sum_{n=2}^\infty ((-1)^{n+1}z_{n,1}(\xi)+(-1)^{n+1}z_{n,2}(\xi))\ad(e_0)^{n-1}e_1, & k=2; \\
(\log(\alpha)y_\alpha(\xi)-\log(\beta)y_\beta(\xi))e_1+\sum_{n=2}^\infty (z_{n,1}(\xi)-z_{n,2}(\xi))\ad(e_0)^{n-1}e_1, & k=3; \\
(\log(\alpha)y_\alpha(\xi)-\log(\beta)y_\beta(\xi))e_1+\sum_{n=2}^\infty ((-1)^{n+1}z_{n,1}(\xi)+(-1)^n z_{n,2}(\xi))\ad(e_0)^{n-1}e_1, & k=4.
\end{cases}}
\end{equation*}

On the other hand, write $(z_1,z_2,z_3,z_4)=(\zeta,\zeta^{-1},\eta,\eta^{-1})$ and notice that $\log(z_k)=0$ for all $k=1,\ldots,4$, then we need
$$\ev_{\varepsilon_k}(\xi)=\Li_1(z_k)e_1+\sum_{n=2}^\infty \Li_n(z_k)\ad(e_0)^{n-1}e_1.$$
Comparing the coefficients gives equations for $y_{\alpha}(\xi), y_{\beta}(\xi)$ and the $z_{n,i}(\xi)$. Note that we only need the equations for $k=1,3$ by Coleman--Sinnott (\Cref{lem:Coleman-Sinnott}), which implies $\Li_n(z^{-1})=(-1)^{n+1}\Li_n(z)$ for roots of unity $z\ne 1$ (so the equations for $k=2$ are implied by those for $k=1$, and those for $k=4$ are implied by those for $k=3$). Thus, the equations are given by
$$\begin{cases}\log(\alpha)y_\alpha(\xi)+\log(\beta)y_\beta(\xi)=\Li_1(\zeta), & \\ 
\log(\alpha)y_\alpha(\xi)-\log(\beta)y_\beta(\xi)=\Li_1(\eta), & \\
z_{n,1}(\xi)+z_{n,2}(\xi) = \Li_n(\zeta), & \text{ for }n\geq 2,\\
z_{n,1}(\xi)-z_{n,2}(\xi) = \Li_n(\eta), & \text{ for }n\geq 2.
\end{cases}$$

We solve 
\begin{alignat*}{3}
&y_\alpha(\xi)=\frac{\Li_1(\zeta)+\Li_1(\eta)}{2\log(\alpha)}, \qquad && y_\beta(\xi)=\frac{\Li_1(\zeta)-\Li_1(\eta)}{2\log(\beta)},\\
&z_{n,1}(\xi)=\frac{\Li_n(\zeta)+\Li_n(\eta)}{2},\qquad && z_{n,2}(\xi)=\frac{\Li_n(\zeta)-\Li_n(\eta)}{2}. \qedhere
\end{alignat*}
\end{proof}

By \Cref{thm:rootofunityalwayssol}, the depth-3 locus contains at least $X(\cO_{K,S})$ and $\mathcal{R}_p$ where
\begin{equation}
\label{eq:depth3solgenform}
  \mathcal{R}_p \coloneqq 
\bigcup_{\zeta,\eta\in\mu_{p-1}(\bZ_p)\setminus\{1\}}
\left\{
(\zeta,\zeta^{-1},\eta,\eta^{-1})
\right\},
\end{equation}
and also contains $X(\bZ[\sqrt{2}]\otimes\bZ_p)_{(\sqrt{2}),\PL,3}$ by functoriality, which is the same as $X(\bZ[\sqrt{2}]\otimes\bZ_p)_{(\sqrt{2}),\PL,2}$ (see \cite[§9.2]{LL:PolylogNF}).

\begin{theorem}
\label{thm:depth3-locus-description}
Assume the $p$-adic period conjecture. For $K=\bQ(\zeta_8)$, $S=\left\{(1-\zeta_8)\right\}$ and primes $p$ splitting completely in $K$,
\begin{align*}
    X(\cO_{K,S})\cup \mathcal{R}_p \cup X(\bZ[\sqrt{2}]\otimes\bZ_p)_{(\sqrt{2}),\PL,2}&\subseteq X(\cO_K\otimes\bZ_p)_{S,\mathrm{PL},3},\\
    X(\cO_{K,S})\cup \mathcal{R}_p\cup X(\bZ[\sqrt{2}]\otimes\bZ_p)_{(\sqrt{2}),\PL,2}^{\min}&\subseteq X(\cO_K\otimes\bZ_p)_{S,\mathrm{PL},3}^{\min}.
\end{align*}
All of the above containments are equalities unconditionally for all primes $p<200$ which split completely in $K$.
\end{theorem}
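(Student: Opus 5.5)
The proof splits into three inclusions, followed by a computational check of equality for $p<200$. Throughout, the depth-$3$ locus is cut out, under the period conjecture, by \eqref{eq:keyeq} and \eqref{eq:depth3} (\Cref{thm:depth3-equations}, with $c_{\chi_i},c_{\kappa_i}\neq 0$ by \Cref{period-conjecture-implies-nonvanishing}). The refined locus is cut out by these equations together with one of $X_1+X_3=0$, $Y_1+Y_3=0$ or $X_1+X_3+Y_1+Y_3=0$ (§\ref{sec:refined}).

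The first inclusion, $X(\cO_{K,S})\subseteq X(\cO_K\otimes\bZ_p)^{\min}_{S,\PL,3}$, is formal from the commutativity of the Chabauty--Kim diagram \eqref{eq:motivic-ck-diagram}, and from the fact that global points land in the refined Selmer scheme \cite{LL:PolylogNF}.

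For $\mathcal{R}_p$, I would invoke \Cref{thm:rootofunityalwayssol}. It places every $(\zeta,\zeta^{-1},\eta,\eta^{-1})$ in the $\Sigma=1$ refined locus in infinite depth. Since the loci descend in $N$ (the projection $\Pi_{\PL,\infty}\to\Pi_{\PL,3}$ is compatible with both Kummer maps and the localisation map), these points lie in the depth-$3$ refined locus, hence also in the unrefined one. Alternatively, the Coleman--Sinnott computation just before that theorem already verifies the equations directly.

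For the real-subfield contribution, I would use functoriality of Chabauty--Kim loci in the extension $\bQ(\sqrt2)\subseteq K$ \cite[Lemma~2.5]{LL:PolylogNF}. Note that $\sigma_+$ fixes $\bQ(\sqrt2)$ and $(\sqrt2)$ pulls back to $S$, so this embeds $X(\bZ[\sqrt2]\otimes\bZ_p)_{(\sqrt2),\PL,3}^{(\min)}$ via $(z_1,z_2)\mapsto(z_1,z_1,z_2,z_2)$. Combined with the equality of depth-$3$ and depth-$2$ loci over $\bQ(\sqrt2)$ \cite[§9.2]{LL:PolylogNF}, this gives the stated inclusion. As a sanity check, for such points $W_1=W_2$, $W_3=W_4$ and $Z_1+Z_2=-(Z_3+Z_4)$, so the left sides of \eqref{eq:depth3} vanish. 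The right sides should then vanish as well, and I would confirm this from the functoriality argument rather than by direct algebra.

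The main obstacle is the equality statement for $p<200$, which I would establish by computation. First, I would confirm unconditionally that $c_{\chi_i},c_{\kappa_i}\ne0$ using the preceding lemmas, so that the equations describe the locus exactly without the conjecture. Then I would determine the full solution set of \eqref{eq:keyeq}$+$\eqref{eq:depth3}, starting from the depth-$2$ locus already computed in \Cref{thm:depth2-locus-description}. That locus is the finite set $\operatorname{Sw}(X(\cO_{K,S}))\cup\mathcal{G}_p\cup\mathcal{L}_p\cup X(\bZ[\sqrt2]\otimes\bZ_p)_{(\sqrt2),\PL,2}$, so it suffices to test each point against the two depth-$3$ equations to precision adequate to distinguish zero from nonzero. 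The points surviving should be exactly $X(\cO_{K,S})$, $\mathcal{R}_p$ (a subset of $\mathcal{G}_p$ with $g=g_1$) and the real-subfield points. The expected eliminations are the swapped points $s_{34}(X(\cO_{K,S}))\setminus X(\cO_{K,S})$, the $\mathcal{G}_p$ points with $g=g_0,g_\infty$, and $\mathcal{L}_p$. Verifying these eliminations to sufficient $p$-adic precision, using the Sage code referenced earlier, is the step that carries the content.
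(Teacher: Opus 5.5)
Your proposal is correct and matches the paper's argument. The inclusions come from the formal properties of the Chabauty--Kim diagram, from \Cref{thm:rootofunityalwayssol} (or directly from Coleman--Sinnott), and from functoriality along $\bQ(\sqrt2)\subseteq K$ combined with \cite[§9.2]{LL:PolylogNF}; the equalities for $p<200$ are a machine verification, and filtering the finite depth-$2$ locus of \Cref{thm:depth2-locus-description} through \eqref{eq:depth3} is a sensible way to organise it. Your ``sanity check'' is in fact already a complete argument: for $(z_1,z_1,z_2,z_2)$ the relation $Z_1+Z_2=-Z_3-Z_4$ forces $Z_1+Z_3=Z_1+Z_4=0$, so both sides of \eqref{eq:depth3} vanish, which shows directly that every on-on point of the depth-$2$ locus survives to depth~$3$ (matching the on-on counts in the table) without comparing the loci over $\bQ(\sqrt2)$.
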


Unlike the equations for depth $2$ (\eqref{eq:keyeq}), the equations for depth $3$ (\eqref{eq:keyeq} + \eqref{eq:depth3}) are not $S_3$-stable, i.e., the ideal generated by them is not stable under the $S_3$-action. This is because the definition of the polylogarithmic quotient breaks the $S_3$-symmetry. (Recall that the polylogarithmic quotient is based on the inclusion $X\hookrightarrow \bG_m$, which distinguishes $1$ as the puncture being filled in.) We can implement the technique of \textit{$S_3$-symmetrisation} \cite[§5.2]{CDC:polylog1} to define an intermediate set
\[ X(\cO_K \otimes \bZ_p)_{S,\PL,N} \supseteq X(\cO_K \otimes \bZ_p)_{S,\PL,N}^{S_3} \supseteq X(\cO_K \otimes \bZ_p)_{S,N} \]
strengthening the polylogarithmic Chabauty--Kim method. The $S_3$-symmetrised locus $X(\cO_K \otimes \bZ_p)_{S,\PL,N}^{S_3}$ is defined as
$$X(\cO_K\otimes \bZ_p)_{S,\PL,N}^{S_3}\coloneqq \bigcap_{\sigma\in S_3} \sigma(X(\cO_K\otimes \bZ_p)_{S,\PL,N})$$
where $S_3$ acts on $X(\cO_K\otimes \bZ_p)_{S,\PL,N}$ componentwise.

Here are the statistics for the depth-3 polylogarithmic Chabauty--Kim locus in depth $3$ with $p=17$, including $S_3$-symmetrisation:
\smallskip

\resizebox{\textwidth}{!}{
\begin{tabular}{ |c|c|c|c|c|c|c|c|c|c|c|c|c| } 
 \hline
 \textbf{type} & \multicolumn{2}{c|}{\textbf{Standard}} & \multicolumn{2}{c|}{\textbf{$S_3$-symm}} & \multicolumn{2}{c|}{\textbf{Refined ($\Sigma=1$)}} & \multicolumn{2}{c|}{\textbf{Refined ($\Sigma=0$)}} & \multicolumn{2}{c|}{\textbf{Refined Total}} & \multicolumn{2}{c|}{\textbf{Refined $S_3$-symm}} \\ \hline
  & Sols & Exc. & Sols & Exc. & Sols & Exc. & Sols & Exc. & Sols & Exc. & Sols & Exc. \\ \hline
 \textbf{off-off} & $232$ & $190$ & $42$  & $0$   & $204$ & $190$ & $14$ & $0$  & $232$ & $190$ & $42$  & $0$  \\ \hline
 \textbf{on-off}  & $14$  & $14$  & $0$   & $0$   & $14$  & $14$  & $0$  & $0$  & $14$  & $14$  & $0$   & $0$  \\ \hline
 \textbf{off-on}  & $14$  & $14$  & $0$   & $0$   & $14$  & $14$  & $0$  & $0$  & $14$  & $14$  & $0$   & $0$  \\ \hline
 \textbf{on-on}   & $183$ & $150$ & $183$ & $150$ & $21$  & $10$  & $21$ & $10$ & $63$  & $30$  & $63$  & $30$ \\ \hline
 \textbf{TOTAL}   & $443$ & $368$ & $225$ & $150$ & $253$ & $228$ & $35$ & $10$ & $323$ & $248$ & $105$ & $30$ \\ \hline
\end{tabular}}

\subsection{Depth 4}
The table below shows the basis elements in $(L^{\mathrm{MT}}_S)_{-4}$ modulo the Goncharov ideal with their Galois action.
\smallskip

\resizebox{\textwidth}{!}{%
\begin{minipage}{1.2\textwidth}
\centering
\begin{tabular}{ |c|c|c|c|c|c|c|c|c|c| } 
 \hline
 $\mathrm{id}$ & $[\tau_\alpha,[\tau_\alpha,\chi_1]]$ & $[\tau_\alpha,[\tau_\alpha,\chi_2]]$ & $[\tau_\alpha,\kappa_1]$ & $[\tau_\alpha,\kappa_2]$ & $[\tau_\beta,[\tau_\alpha,\chi_1]]$ & $[\tau_\beta,[\tau_\alpha,\chi_2]]$ & $[\tau_\beta,[\tau_\beta,\chi_1]]$ & $[\tau_\beta,[\tau_\beta,\chi_2]]$ \\ \hline
 $\sigma_+$ & $-1$ & $-1$ & $1$ & $1$ & $-1$ & $-1$ & $-1$ & $-1$ \\ \hline
 $\sigma_i$ & $1$ & $-1$ & $1$ & $-1$ & $-1$ & $1$ & $1$ & $-1$ \\ \hline
\end{tabular} \\[3mm]
\begin{tabular}{ |c|c|c|c|c|c|c|c|c| } 
 \hline
 $\mathrm{id}$ & $[\tau_\beta,\kappa_1]$ & $[\tau_\beta,\kappa_2]$ & $[\tau_\alpha,[\tau_\alpha,[\tau_\alpha,\tau_\beta]]]$ & $[\tau_\beta,[\tau_\alpha,[\tau_\alpha,\tau_\beta]]]$ & $[\tau_\beta,[\tau_\beta,[\tau_\alpha,\tau_\beta]]]$ & $\sigma_{4,1}$ & $\sigma_{4,2}$ \\ \hline
 $\sigma_+$ &  $1$ & $1$ & $1$ & $1$ & $1$ & $-1$ & $-1$ \\ \hline
 $\sigma_i$ & $-1$ & $1$ & $-1$ & $1$ & $-1$ & $1$ & $-1$ \\ \hline
\end{tabular}
\end{minipage}}
\smallskip

Let $\mu_1,\mu_2\in (L_S^{\MT})_{-4} \otimes \bQ_p$ with $(\sigma_+,\sigma_i)\mu_1=(-1,1)\mu_1$ and $(\sigma_+,\sigma_i)\mu_2=(-1,-1)\mu_2$ which absorb the terms in the expansion of $\eps_1$ with the same Galois action, as we did for $\kappa_1,\kappa_2$. We write the new global coordinates in addition to \eqref{eq:depth2-parametrisation}, \eqref{eq:depth3-parametrisation} for depth $4$ as
\begin{equation}
\label{eq:depth4-parametrisation}
   \xi(\mu_1)=u_1(\xi)[e_0,[e_0,[e_0,e_1]]],\qquad \xi(\mu_2)=u_2(\xi)[e_0,[e_0,[e_0,e_1]]]. 
\end{equation}

Write the $\fp_1$-adic period point modulo degree $<-4$ and modulo the Goncharov ideal as
\begin{align*}
    \varepsilon_1&= \log(\alpha)\tau_\alpha+\log(\beta)\tau_\beta+c_{\chi_1}\chi_1+c_{\chi_2}\chi_2+c_{\tau_\alpha\tau_\beta}[\tau_\alpha,\tau_\beta]+c_{\kappa_1}\kappa_{1}+c_{\kappa_2}\kappa_{2}\\
    &+c_{\tau_\alpha\chi_1}[\tau_\alpha,\chi_1]+c_{\tau_\alpha\chi_2}[\tau_\alpha,\chi_2]+c_{\tau_\beta\chi_1}[\tau_\beta,\chi_1]+c_{\tau_\beta\chi_2}[\tau_\beta,\chi_2] \\
        &+ c_{\mu_1}\mu_1+c_{\mu_2}\mu_2 +c_{\tau_\alpha\kappa_1}[\tau_\alpha,\kappa_1]+c_{\tau_\alpha\kappa_2}[\tau_\alpha,\kappa_2] +c_{\tau_\beta\kappa_1}[\tau_\beta,\kappa_1] +c_{\tau_\beta\kappa_2}[\tau_\beta,\kappa_2] \\
        &+ c_{\tau_\alpha\tau_\alpha\tau_\alpha\tau_\beta}[\tau_\alpha,[\tau_\alpha,[\tau_\alpha,\tau_\beta]]]+c_{\tau_\beta\tau_\beta\tau_\alpha\tau_\beta}[\tau_\beta,[\tau_\beta,[\tau_\alpha,\tau_\beta]]]
    + c_{\tau_\beta\tau_\alpha\tau_\alpha\tau_\beta}[\tau_\beta,[\tau_\alpha,[\tau_\alpha,\tau_\beta]]]
\end{align*}
From this we obtain $\varepsilon_2,\varepsilon_3,\varepsilon_4$ by applying the Galois action. We get formulas for the evaluation maps $\ev_{\eps_i}$ as consequence, and finally obtain equations for the Chabauty--Kim locus by eliminating the variables. We omit the computation details and directly present the resulting depth-4 equations:

\begin{theorem}
\label{thm:depth4-equations}
Let $K=\bQ(\zeta_8)$ and $S=\left\{(1-\zeta_8)\right\}$. If $c_{\chi_i},c_{\kappa_i},c_{\mu_i}\ne 0$ for $i=1,2$ and $c_{\tau_\alpha\tau_\beta}\ne 0$, the depth-4 polylogarithmic Chabauty--Kim locus $X(\cO_K\otimes \bZ_p)_{S,\PL,4}$ is the solution set of the following equations in addition to \eqref{eq:keyeq} and \eqref{eq:depth3} (with the coordinates $(X_i,Y_i,Z_i,W_i,V_i)=(\log(z_i),L_1(z_i),L_2(z_i),L_3(z_i),L_4(z_i))$ for $i=1,2,3,4$):
\begin{equation}
\label{eq:depth4eq}
\left\{
\begin{aligned}
& V_{1}+V_{2}+V_{3}+V_{4} = c_{4,1} (X_{1}+X_{3})(W_{1}+W_{2}+W_{3}+W_{4})  \\
& \quad + c_{4,2} (X_{1}-X_{3})(W_{1}+W_{2}-W_{3}-W_{4}) + c_{4,3} (X_{1}-X_{3})(X_{1}+X_{3})(Z_{1}+Z_{2})  \\[4pt]
& V_{1}+V_{2}-V_{3}-V_{4} = c_{4,4} (X_{1}-X_{3})(W_{1}+W_{2}+W_{3}+W_{4})  \\
& \quad + c_{4,5} (X_{1}+X_{3})(W_{1}+W_{2}-W_{3}-W_{4}) \\ &\quad+ c_{4,6} (X_{1}-X_{3})^{2}(Z_{1}+Z_{2}) + c_{4,7} (X_{1}+X_{3})^{2}(Z_{1}+Z_{2}) 
\end{aligned}
\right.
\end{equation}
where
\begin{gather}
c_{4,1} = \frac{c_{\tau_\alpha \kappa_1}}{2c_{\tau_\alpha} c_{\kappa_1} }, \quad
c_{4,2} = \frac{c_{\tau_\beta \kappa_2}}{2c_{\tau_\beta}c_{\kappa_2}}, \quad
c_{4,3} = \frac{c_{\tau_\beta\tau_\alpha\tau_\alpha\tau_\beta}}{2 c_{\tau_\alpha\tau_\beta} c_{\tau_\alpha} c_{\tau_\beta}}, \\
c_{4,4} = \frac{c_{\tau_\beta \kappa_1}}{2 c_{\tau_\beta} c_{\kappa_1}}, \quad
c_{4,5} = \frac{c_{\tau_\alpha \kappa_2}}{2 c_{\tau_\alpha} c_{\kappa_2}}, \quad
c_{4,6} = \frac{c_{\tau_\beta\tau_\beta\tau_\alpha\tau_\beta}}{2 c_{\tau_\alpha\tau_\beta} c_{\tau_\beta}^2},\quad 
c_{4,7} = \frac{c_{\tau_\alpha\tau_\alpha\tau_\alpha\tau_\beta}}{2 c_{\tau_\alpha\tau_\beta} c_{\tau_\alpha}^2}.
\end{gather}
\end{theorem}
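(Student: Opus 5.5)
We follow the same scheme as in depth~$2$ and~$3$: write down the four evaluation maps $\ev_{\eps_k}$ in depth~$4$ via \Cref{cocycle-evaluation-map}, then eliminate the Selmer coordinates. First, the $[e_0,[e_0,[e_0,e_1]]]$-coefficient $k_4$ of $\ev_{\eps_1}(\xi)$ is read off from \Cref{cocycle-evaluation-map}, using the listed degree $-4$ terms of $\eps_1$. The terms are:
\begin{itemize}
\item $c_{\mu_j}u_j$ for $j=1,2$;
\item $c_{\tau\kappa_j}x_\tau k_j$ for $\tau\in\{\tau_\alpha,\tau_\beta\}$;
\item $c_{\tau\tau'\chi_j}x_\tau x_{\tau'}z_j$;
\item the cubic terms in $x_\alpha,x_\beta,y_\alpha,y_\beta$ coming from the three iterated brackets of $\tau$'s, with the primed coefficients $c'$ obtained by the rule in \Cref{cocycle-evaluation-map}.
\end{itemize}
Then we obtain $\eps_2,\eps_3,\eps_4$ by applying $\sigma_+^*,\sigma_i^*,(\sigma_+\sigma_i)^*$ (\Cref{thm:Galpermuteseta}), which multiplies each basis element by the sign in the degree $-4$ table. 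The point is that each $V_k$ is a signed combination of the same monomials.

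Next we form the four character combinations $V_1\pm V_2\pm V_3\pm V_4$. Those with $\sigma_+$-sign $-1$, namely $V_1-V_2\pm(V_3-V_4)$, contain $c_{\mu_1}u_1$, respectively $c_{\mu_2}u_2$. Since $c_{\mu_i}\neq0$, the free variables $u_1,u_2$ can be solved for, so these combinations give no equations. The $\sigma_+$-invariant combinations $V_1+V_2\pm(V_3+V_4)$ involve only the $(+,\pm)$-isotypic terms $[\tau_\alpha,\kappa_1]$, $[\tau_\beta,\kappa_2]$, $[\tau_\beta,[\tau_\alpha,[\tau_\alpha,\tau_\beta]]]$ for the $+$ sign, and $[\tau_\beta,\kappa_1]$, $[\tau_\alpha,\kappa_2]$, $[\tau_\alpha,[\tau_\alpha,[\tau_\alpha,\tau_\beta]]]$, $[\tau_\beta,[\tau_\beta,[\tau_\alpha,\tau_\beta]]]$ for the $-$ sign. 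The $\chi$-brackets have $\sigma_+$-sign $-1$ and drop out.

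We then substitute back using local coordinates. From depth~$1$, \eqref{eq:xalphaxbeta} gives $x_\alpha=(X_1+X_3)/2c_{\tau_\alpha}$ and $x_\beta=(X_1-X_3)/2c_{\tau_\beta}$. From depth~$3$, $W_1+W_2\pm(W_3+W_4)=4c_{\kappa_{1/2}}k_{1/2}$, since the $\chi$-terms cancel in the $\sigma_+$-invariant part. Hence $k_j=(W_1+W_2\pm W_3\pm W_4)/4c_{\kappa_j}$, which produces $c_{4,1},c_{4,2},c_{4,4},c_{4,5}$ (the factor $2\cdot4$ against the $4$ on the left gives the $\tfrac12$).

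The step I expect to be the main obstacle is rewriting the cubic $\tau$-terms. They arrive as $x_\bullet x_\bullet(x_\alpha y_\beta-x_\beta y_\alpha)$ together with the extra primed-coefficient contributions from \Cref{cocycle-evaluation-map}. I would show that everything collapses to the single factor $x_\alpha y_\beta-x_\beta y_\alpha$. By \eqref{eq:c2} and the depth-$2$ formulas, this factor equals $(Z_1+Z_2)/(2c_{\tau_\alpha\tau_\beta})$, using $c_{\tau_\alpha\tau_\beta}=L_2(\beta)\neq0$. The quadratic prefactors $x_\alpha x_\beta$, $x_\beta^2$, $x_\alpha^2$ then yield $c_{4,3},c_{4,6},c_{4,7}$.

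This requires checking, via the ordering $(\tau_\beta,\tau_\alpha)$, that the $c'$ for non-descending words either vanish or combine into this antisymmetric form. If that fails, I would instead absorb the extra monomials by redefining the coefficients, at the cost of more terms. For the converse inclusion, the remaining Selmer coordinates are determined by, or freely solvable from, the local coordinates. This uses the nonvanishing of $c_{\chi_i},c_{\kappa_i},c_{\mu_i},c_{\tau_\alpha\tau_\beta}$, so the two displayed equations together with the lower-depth ones cut out exactly the scheme-theoretic image.
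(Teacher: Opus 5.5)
Your proposal is correct and follows essentially the paper's route: expand $\eps_1$ modulo degree $<-4$ and the Goncharov ideal, transport it to $\eps_2,\eps_3,\eps_4$ via the Galois action, split $V_1,\dots,V_4$ into the four characters, let $u_1,u_2$ absorb the $\sigma_+$-odd combinations, and substitute $x_\alpha,x_\beta,k_1,k_2$ and $x_\alpha y_\beta-x_\beta y_\alpha=(Z_1+Z_2)/(2c_{\tau_\alpha\tau_\beta})$ into the $\sigma_+$-even ones; you give more detail than the paper, which omits this computation. The step you flag as the main obstacle is immediate and needs no fallback: $\xi$ is a Lie homomorphism and every bracket of $e_1$ with $\ad(e_0)^m e_1$ vanishes in $\Lie(\Pi_{\PL}^{\omega})$, so $\xi([\tau,[\tau',[\tau_\alpha,\tau_\beta]]])=x_\tau x_{\tau'}(x_\alpha y_\beta-x_\beta y_\alpha)\ad(e_0)^3e_1$ directly, and the primed coefficients in \Cref{cocycle-evaluation-map} are just the monomial expansion of this expression.
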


\begin{lemma}
    For $i=1,2$, if $\Li_4(\zeta_8)+(-1)^{i+1} \Li_4(-\zeta_8)\ne 0$, then $c_{\mu_i}\ne 0$. The conditions $\Li_4(\zeta_8)\pm \Li_4(-\zeta_8)\ne 0$ and $c_{\tau_\alpha\tau_\beta}=L_2(\beta)\ne 0$ (see \eqref{eq:ctaualphataubeta}) are satisfied for all $p<200$ which split completely in $K=\bQ(\zeta_8)$. In particular, the nonvanishing assumption in \Cref{thm:depth4-equations} is satisfied for those~$p$.
\end{lemma}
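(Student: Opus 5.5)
The plan is to mimic the proofs of the analogous depth-2 and depth-3 lemmas (\Cref{lem:cchiinonzero} and its depth-3 counterpart), using the formulas for the evaluation maps in half-weight $-4$. First I would write out the coefficient of $\ad(e_0)^3e_1$ in $\ev_{\eps_k}(\xi)$ for $k=1,\dots,4$. By construction, $\mu_1$ and $\mu_2$ absorb all terms of $\eps_1$ whose Galois character is $(-1,1)$ and $(-1,-1)$. Applying $\eps_2=\sigma_+^*\eps_1$, $\eps_3=\sigma_i^*\eps_1$ and $\eps_4=(\sigma_+\sigma_i)^*\eps_1$, the terms $c_{\mu_1}u_1(\xi)$ and $c_{\mu_2}u_2(\xi)$ then appear with the sign patterns $(+,-,+,-)$ and $(+,-,-,+)$ respectively. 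Every other half-weight $-4$ contribution is a product of lower coordinates: the $x$'s times the $k_i$, the $z_i$, or $x_\alpha y_\beta-x_\beta y_\alpha$. Each such term carries its own Galois character.

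Next I would take the signed combinations $V_1-V_2+V_3-V_4$ and $V_1-V_2-V_3+V_4$. These isolate $4c_{\mu_1}u_1(\xi)$, respectively $4c_{\mu_2}u_2(\xi)$, plus terms from commutator elements of character $(-1,\pm1)$. From the table these are $[\tau_\alpha,[\tau_\alpha,\chi_j]]$, $[\tau_\beta,[\tau_\alpha,\chi_j]]$ and $[\tau_\beta,[\tau_\beta,\chi_j]]$. Under $\xi$ they contribute quadratic monomials in $x_\alpha,x_\beta$ times $z_1$ or $z_2$. If $c_{\mu_1}=0$, the first combination becomes an explicit polynomial identity in the lower-depth coordinates. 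Using \eqref{eq:xalphaxbeta} and the depth-2 relations, I would rewrite it in terms of $X_i$ and $Z_i$. This gives an identity of the form
\[ V_1-V_2+V_3-V_4 = Q(X_1,X_3)\,(Z_1+Z_3)+Q'(X_1,X_3)\,(Z_1+Z_4) \]
with quadratic $Q,Q'$. It holds on the Chabauty--Kim locus and hence on $X(\cO_{K,S})$, which lies in the locus via the commutative diagram \eqref{eq:motivic-ck-diagram}.

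Finally I would plug in the point $(z_1,z_2,z_3,z_4)=(\zeta_8,\zeta_8^{-1},-\zeta_8,-\zeta_8^{-1})$, the image of $\zeta_8\in X(\cO_{K,S})$. Here all $X_i=0$, so the right-hand side vanishes. On the left, $L_4=\Li_4$ at roots of unity, and by Coleman--Sinnott (\Cref{lem:Coleman-Sinnott}) $\Li_4(z^{-1})=-\Li_4(z)$. The left side therefore equals $2(\Li_4(\zeta_8)+\Li_4(-\zeta_8))$, respectively $2(\Li_4(\zeta_8)-\Li_4(-\zeta_8))$. So vanishing of $c_{\mu_i}$ forces $\Li_4(\zeta_8)+(-1)^{i+1}\Li_4(-\zeta_8)=0$, which is the contrapositive of the claim.

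The remaining assertions are numerical: checking $\Li_4(\zeta_8)\pm\Li_4(-\zeta_8)\neq 0$ and $L_2(\beta)\ne 0$ for the primes $p\equiv 1\pmod 8$ below $200$, by computing $p$-adic polylogarithms in Sage as in the linked code. Combined with the earlier lemmas, this gives the nonvanishing hypotheses of \Cref{thm:depth4-equations}. The main obstacle is the bookkeeping in the first step. One must confirm that no term of character $(-1,\pm1)$ survives at the point $\zeta_8$ without a factor of some $X_i$. Since every commutator in the Goncharov expansion has at least one $\tau$, and $\xi(\tau)$ contributes its $e_0$-part only through $x_\alpha$ or $x_\beta$, this should hold. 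I would double-check it against \Cref{cocycle-evaluation-map}.
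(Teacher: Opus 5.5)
Your proposal is correct and follows the paper's proof. You isolate $4c_{\mu_i}u_i(\xi)$ through the signed combinations $V_1-V_2+V_3-V_4$ and $V_1-V_2-V_3+V_4$, evaluate at $(\zeta_8,\zeta_8^{-1},-\zeta_8,-\zeta_8^{-1})$, and use Coleman--Sinnott together with the numerical check. The paper absorbs the commutators $[\tau,[\tau,\chi_j]]$ of character $(-1,\pm1)$ into $\mu_i$, so your separate bookkeeping of them is harmless but unnecessary. It is harmless because these terms vanish at the global Kummer image of $\zeta_8$, where $x_\alpha=x_\beta=0$.
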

\begin{proof}
By the evaluation maps, $c_{\mu_1}=0$ implies $V_1-V_2+V_3-V_4=4c_{\mu_1}u_1(\xi)=0$, and $c_{\mu_2}=0$ implies $V_1-V_2-V_3+V_4=4c_{\mu_2}u_2(\xi)=0$. Plugging in $z=\zeta_8\in X(\cO_{K,S})$ gives $\Li_4(\zeta_8)\pm \Li_4(-\zeta_8)=0$. 
\end{proof}

The coefficients $c_{4,1},\ldots,c_{4,7}$ can be determined by solving the linear equations obtained by plugging points from $X(\cO_{K,S})$ into \eqref{eq:depth4eq}. Once they are determined, we can compute the depth-4 polylogarithmic Chabauty--Kim locus. The results for various~$p$ are shown in the following table.
\begin{table}[ht]
\centering
\resizebox{\textwidth}{!}{
\begin{tabular}{ |c|c|c|c|c|c|c|c|c| } 
 \hline
 $\# X(\cO_K\otimes \bZ_p)_{S,\PL,N}$ & $p=17$  & $p=41$ & $p=73$ & $p=89$ & $p=97$ & $p=113$ & $p=137$ & $p=193$
  \\  \hline
 $N=2$ & $903$ & $6255$ & $20509$ & $30111$ & $36001$ & $48975$ & $73503$ & $146947$ \\    
 $N=3$ & $443$ & $3203$ & $10161$ & $14963$ & $17589$ & $24323$ & $37043$ & $73263$ \\
 $N=4$ & $293$ & $1589$ & $5109$ & $7637$ &  $9093$ & $12389$ & $18293$ & $36549$ \\
 $N=4$ + $S_3$-symmetrisation & $75$ & $75$ & $79$ & $75$ & $79$ & $75$ & $75$ & $79$ \\
 \hline
\end{tabular}}
\caption{The cardinality of $X(\cO_K\otimes \bZ_p)_{S,\mathrm{PL},N}$ for $N=2,3,4$ (including $S_3$-symmetrisation for $N=4$), for splitting primes $p<200$}
\label{tab:overallstatistics}
\end{table}

We see that $\# X(\cO_K\otimes \bZ_p)_{S,\PL,4}=(p-2)^2+68$ for all primes in the table. This fact implies that the depth $4$ locus only contains $X(\cO_{K,S})$ plus the roots-of-unity solutions provided by \Cref{thm:rootofunityalwayssol}, which is the best we can hope for.
\begin{theorem}
\label{thm:depth4-locus-description}
    Assume the $p$-adic period conjecture. For $K=\bQ(\zeta_8)$, $S=\left\{(1-\zeta_8)\right\}$ and primes $p$ splitting completely in $K$ (see the definition of $\mathcal{R}_p$ in \eqref{eq:depth3solgenform}),
    \begin{equation}
    \label{eq:XOKSPLdepth4}
      X(\cO_{K,S})\cup \mathcal{R}_p\subseteq X(\cO_K\otimes \bZ_p)_{S,\PL,4}^{\min} \subseteq  X(\cO_K\otimes \bZ_p)_{S,\PL,4}.
    \end{equation}
    Both containments are equalities unconditionally for all primes $p<200$ which split completely in $K$.
\end{theorem}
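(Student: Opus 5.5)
The plan has three parts: the two inclusions in \eqref{eq:XOKSPLdepth4}, and then the equality statement for $p<200$.

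The inclusion $X(\cO_{K,S})\subseteq X(\cO_K\otimes\bZ_p)^{\min}_{S,\PL,4}$ holds by construction. The global Kummer image of an $S$-integral point lies in the refined Selmer scheme: by \Cref{lem:globalKummervaluation}, $x_\fl(\xi_z)=v_\fl(z)$ and $y_\fl(\xi_z)=-v_\fl(1-z)$, and for a solution of the $S$-unit equation one of $v_\fl(z)$, $v_\fl(1-z)$, $v_\fl(z)-v_\fl(1-z)$ vanishes, so the defining equation $x_\fl y_\fl(x_\fl+y_\fl)=0$ holds. The inclusion $\mathcal{R}_p\subseteq X(\cO_K\otimes\bZ_p)^{\min}_{S,\PL,4}$ comes from \Cref{thm:rootofunityalwayssol}. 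There the quadruples $(\zeta,\zeta^{-1},\eta,\eta^{-1})$ are shown to lie in the $\Sigma=1$ refined locus in infinite depth. Projecting the witnessing Selmer class along $\Pi_{\PL,\infty}\twoheadrightarrow\Pi_{\PL,4}$ keeps it in $\Sel^{(1)}$, since the condition $x_\alpha=0$ only involves degree $-1$ coordinates. The localisation map is compatible with this projection, so the quadruple lies in $X(\cO_K\otimes\bZ_p)^{(1)}_{S,\PL,4}\subseteq X(\cO_K\otimes\bZ_p)^{\min}_{S,\PL,4}$. The second containment in \eqref{eq:XOKSPLdepth4} is the trivial inclusion $\Sel^{\min}\subseteq\Sel$.

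For the equality statement with $p<200$, I would first make the right-hand locus explicit without any conjecture. The preceding lemmas verify $c_{\chi_i},c_{\kappa_i},c_{\mu_i}\neq0$ and $L_2(\beta)\neq0$ numerically for these primes. Hence \Cref{thm:depth4-equations} applies unconditionally, and $X(\cO_K\otimes\bZ_p)_{S,\PL,4}$ is exactly the solution set of \eqref{eq:keyeq}, \eqref{eq:depth3} and \eqref{eq:depth4eq}. The coefficients $c_{4,j}$ are fixed by linear algebra on known $S$-integral points. The conjecture was used only to obtain the lower containment for $\mathcal{R}_p$. For the equality I would instead check directly that each element of $\mathcal{R}_p$ satisfies the equations. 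This follows from Coleman--Sinnott (\Cref{lem:Coleman-Sinnott}). All $X_i$ vanish, so every right-hand side of \eqref{eq:depth3} and \eqref{eq:depth4eq} is zero. We also have $V_1=-V_2$ and $V_3=-V_4$, so the left-hand sides of \eqref{eq:depth4eq} vanish too. The lower-depth equations were already verified in the discussion before \Cref{thm:rootofunityalwayssol}. For the refined version, $X_1+X_3=0$ holds on these points. The argument of \Cref{thm:rootofunityalwayssol} never actually needs the conjecture: given the numerically verified nonvanishing of $c_{\sigma_{n,i}}$ in degrees $n\le4$, one can rescale and solve for $y_\alpha,y_\beta,z_{n,i}$ in depth $4$. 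This shows $\mathcal{R}_p$ lies in the refined locus unconditionally.

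It then remains to show that the full locus contains nothing beyond $X(\cO_{K,S})\cup\mathcal{R}_p$. I would reduce this to a finite computation using the case split from depth $2$. By \cite[Lemma~8.4]{LL:PolylogNF}, each pair $(z_1,z_2)$ and $(z_3,z_4)$ is either diagonal or one of the finitely many root-of-unity pairs \eqref{eq:root-of-unity-parametrisation}. In the off-off case one simply tests finitely many candidates against the equations. In the on-off and off-on cases one gets one-variable equations, solved by Hensel lifting on each residue disc. The on-on case is the main obstacle. There we have $z_1=z_2$ and $z_3=z_4$, and the depth $\le4$ equations become a system in the two variables $z_1,z_3$. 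In this case \eqref{eq:keyeq} gives $Z_1=-Z_3$, and the equations of \eqref{eq:depth3} involving $W_1-W_2$ degenerate. So one has to use the $V$-equations together with the remaining depth-$2$ equations and solve with the multivariate Hensel lemma, checking nondegeneracy (a nonzero Jacobian) at each root modulo $p$ so that every solution is isolated and accounted for. Finally, the count must match $(p-2)^2+68$, which is $\#\mathcal{R}_p$ plus the $75$ $S$-integral points minus the $7$ of them lying in $\mathcal{R}_p$; these tallies are recorded in the Sage code. Once the full locus equals $X(\cO_{K,S})\cup\mathcal{R}_p$, the refined locus, which is sandwiched in between, equals it too.
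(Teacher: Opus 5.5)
Your proposal is correct and follows essentially the same route as the paper: the lower inclusions come from the refined Selmer condition on global Kummer classes and \Cref{thm:rootofunityalwayssol}, and equality for $p<200$ comes from three ingredients. These are the explicit equations of \Cref{thm:depth4-equations} (unconditional because the nonvanishing hypotheses are checked numerically), the diagonal/off-diagonal case split, and the computed count $(p-2)^2+68=\#\bigl(X(\cO_{K,S})\cup\mathcal{R}_p\bigr)$. Your direct Coleman--Sinnott check that $\mathcal{R}_p$ satisfies all the equations together with $X_1+X_3=0$ is the right way to make that inclusion unconditional; the side remark about rescaling the $c_{\sigma_{n,i}}$ is unnecessary, and what is actually verified numerically is $c_{\chi_i},c_{\kappa_i},c_{\mu_i}\neq 0$ for the Galois-eigen generators.
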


Because $\mathcal{R}_p$ persists in the refined locus even in infinite depth (by \Cref{thm:rootofunityalwayssol}, assuming the $p$-adic period conjecture), we do not expect Kim's Conjecture to hold for the polylogarithmic quotient. We may ask whether the polylogarithmic Chabauty--Kim method in combination with $S_3$-symmetrisation (see §\ref{subsection:depth3}) is enough to prove Kim's Conjecture (for the full depth-$N$ quotient), i.e., whether $X(\cO_K\otimes \bZ_p)_{S,\PL,N}^{S_3}=X(\cO_{K,S})$ for $N \gg 0$. Since $\# X(\cO_{K,S})=75$, we read off the last row of the table that Kim's Conjecture holds in depth~4 for $p=17,41,89,113,137$, which proves \Cref{mainthm:s3-sym-kim}. The remaining cases $p=73, 97, 193$ are exactly those $p<200$ where $\zeta_6\in \bZ_p$, and the $79-75=4$ exceptional solutions are $(\zeta_6^{\pm 1},\zeta_6^{\mp 1},\zeta_6^{\pm 1},\zeta_6^{\mp 1})$ and $(\zeta_6^{\pm 1},\zeta_6^{\mp 1},\zeta_6^{\mp 1},\zeta_6^{\pm 1})$.

\section*{Data, Materials, and Software Availability}
The Sage code and data used for obtaining computational results is available at \url{https://github.com/martinluedtke/PolylogNF}.

\printbibliography

\end{document}